
\listfiles
\documentclass[12pt,leqno]{amsart}
\setlength{\textheight}{23cm}
\setlength{\textwidth}{16cm}
\setlength{\topmargin}{-0.8cm}
\setlength{\parskip}{0.3\baselineskip}
\hoffset=-1.4cm
\newcommand\ignore[1]{}
\input amssym.def
\usepackage[raiselinks=false,colorlinks=true,citecolor=blue,urlcolor=blue,linkcolor=blue,bookmarksopen=true,pdftex]{hyperref}
\usepackage{booktabs,multirow,lscape,longtable} 
\usepackage{mathtools}
\usepackage{tikz} 
\numberwithin{equation}{section}
\numberwithin{equation}{subsection} 
\newtheorem{theorem}{Theorem}[section]
\newtheorem{proposition}[theorem]{Proposition}
\newtheorem{lemma}[theorem]{Lemma}
 
\newtheorem{remark}[theorem]{Remark}

\begin{document}
\baselineskip=15.5pt

\title
[Special cycles]{Special cycles in compact locally Hermitian symmetric spaces of type III associated with the Lie group $SO_0(2,m)$}  
\author{Ankita Pal, Pampa Paul}
\address{Department of Mathematics, Presidency University, 86/1 College Street, Kolkata 700073, India}
\email{ankita.maths1995@gmail.com \\ pampa.maths@presiuniv.ac.in}
\subjclass[2020]{22E40,22E46,17B20,17B40, 57S15.  \\
Keywords and phrases: arithmetic lattice, involutions of real simple Lie algebra, orientation preserving isometry, 
geometric cycle.}

\thispagestyle{empty}
\date{}

\begin{abstract}
Let $G = SO_0(2,m),$ the connected component of the Lie group $SO(2,m);\ K = SO(2) \times SO(m),$ a maximal compact subgroup of $G;$ and $\theta$ be the associated 
Cartan involution of $G.$ Let $X = G/K,\ \frak{g}_0$ be the Lie algebra of $G$ and $\frak{g} = \frak{g}_0^\mathbb{C}.$ In this article, we have considered the special cycles 
associated with all possible involutions of $G$ commuting with $\theta.$ We have determined the special cycles which give non-zero cohomology classes in 
$H^*(\Gamma \backslash X; \mathbb{C})$ for some $\theta$-stable torsion-free arithmetic uniform lattice $\Gamma$ in $G,$ by a result of Millson and Raghunathan.  
For each cohomologically induced representation $A_\frak{q}$ with trivial infinitesimal character, we have determined the special cycles for which the non-zero cohomology 
class has no $A_\frak{q}$-component, via Matsushima's isomorphism. 
\end{abstract}
\maketitle

\noindent 
\section{Introduction} 

Let  $I_{p,q} = \textrm{diag}(-I_p, I_q) = \left(
\begin{array}{ccc}
-I_p & 0 \\
0 & I_q \\
\end{array}
\right),$ where for any positive integer $n,\ I_n$ denotes the identity matrix of order $n.$
Let $G = SO_0(2,m),$ the connected component of the Lie group $SO(2,m) = \{ g \in SL(m+2, \mathbb{R}) : g^t I_{2,m} g = I_{2,m} \}.$ 
The Lie algebra of $G$ is $\frak{g}_0 = \frak{so}(2,m),$ which is given by 
$\{ \left(
\begin{array}{ccc}
X_1 & X_2 \\
X_2^t & X_3 \\
\end{array}
\right): X_1 \in \frak{so}(2), X_3 \in \frak{so}(m), X_2 \in M_{2 \times m}(\mathbb{R})\},$ where $\frak{so}(n)$ is the Lie algebra of all real skew symmetric matrices of order $n$ 
for all $n \in \mathbb{N}.$ For any $(m+2)\times(m+2)$ 
matrix $g,$ let $\theta(g)$ be the conjugation of $g$ by the matrix $I_{2,m}.$ Then $\theta(G) = G$ and $\theta$ is a Cartan involution of $G$ corresponding to the maximal compact subgroup 
$K= \{ \left(
\begin{array}{ccc}
A & 0 \\
0 & B \\
\end{array}
\right): A \in SO(2), B \in SO(m)\}.$ Also $\theta (\frak{g}_0) = \frak{g}_0$ and $\theta : \frak{g}_0 \longrightarrow \frak{g}_0$ be the differential of $\theta$ at the identity. 
Let $\frak{k}_0 = \{ \left(
\begin{array}{ccc}
X_1 & 0 \\
0 & X_3 \\
\end{array}
\right): X_1 \in \frak{so}(2), X_3 \in \frak{so}(m)\},$ and $\frak{p}_0 =  \{ \left(
\begin{array}{ccc}
0 & X_2 \\
X_2^t & 0 \\
\end{array}
\right): X_2 \in M_{2 \times m}(\mathbb{R})\}.$ Then $\frak{k}_0$ and $\frak{p}_0$ are  $+1$ and $-1$-eigenspaces of $\theta : \frak{g}_0 \longrightarrow \frak{g}_0$ and 
$\frak{k}_0$ is the Lie algebra of $K_0.$ Also $X = G/K$ is a Hermitian symmetric space of non-compact type. The complexifications of $\frak{g}_0$ and $\frak{k}_0$ are 
$\frak{g} = \{\left(
\begin{array}{ccc}
Z_1 & Z_2 \\
Z_2^t & Z_3 \\
\end{array}
\right): Z_1 \in \frak{so}(2,\mathbb{C}), Z_3 \in \frak{so}(m,\mathbb{C}), Z_2 \in M_{2 \times m}(\mathbb{C})\}$
and $\frak{k} = \frak{so}(2, \mathbb{C}) \oplus  \frak{so}(m, \mathbb{C})$ respectively. Also the complexification of $\frak{p}_0$ is 
$\frak{p} =\{ \left(
\begin{array}{ccc}
0 & Z_2 \\
Z_2^t & 0 \\
\end{array}
\right): Z_2  \in M_{2 \times m}(\mathbb{C})\}.$ Let $f : \frak{so}(m+2, \mathbb{C}) \longrightarrow \frak{g}$ be the map 
$f \left(
\begin{array}{ccc}
Z_1 & Z_2 \\
-Z_2^t & Z_3 \\
\end{array}
\right) = 
\left(
\begin{array}{ccc}
Z_1 & iZ_2 \\
iZ_2^t & Z_3 \\
\end{array}
\right),$ 
where $Z_1 \in \frak{so}(2,\mathbb{C}), Z_3 \in \frak{so}(m,\mathbb{C}), Z_2 \in M_{2 \times m}(\mathbb{C}).$ Then 
$f$ is an isomorphism. 
Let $\frak{t}_0$ be a maximal abelian subalgebra of  $\frak{k}_0.$ Then $\frak{t}_0 =  \{ \left(
\begin{array}{ccc}
H_1 & 0 \\
0 & H_2 \\
\end{array}
\right): H_1 \in \frak{so}(2), H_2 \in \frak{t}'_0 \}$ for some maximal abelian subalgebra of $\frak{so}(m),$ and the complexification $\frak{h}$ of $\frak{t}_0$ is a 
Cartan subalgebra of $\frak{k}$ as well as of $\frak{g}.$ Let $\Delta = \Delta(\frak{g}, \frak{h})$ be the set of all non-zero roots of $\frak{g}$ with respect to the Cartan subalgebra 
$\frak{h}.$ Since $\frak{h} \subset \frak{k},$ and each root subspace $\frak{g}^\alpha (\alpha \in \Delta)$ is one-dimensional, 
either $\frak{g}^\alpha \subset \frak{k}$ or $\frak{g}^\alpha \subset \frak{p}.$ We say that a root $\alpha \in \Delta$ is compact if $\frak{g}^\alpha \subset \frak{k},$ and 
non-compact if $\frak{g}^\alpha \subset \frak{p}.$ Let $\Delta_\frak{k}$ be the set of all compact roots in $\Delta,$ and $\Delta_n$ be the set of all non-compact roots in 
$\Delta.$ Then $\frak{k} = \frak{h} + \sum_{\alpha \in \Delta_\frak{k}} \frak{g}^\alpha, \frak{p} = \sum_{\alpha \in \Delta_n} \frak{g}^\alpha.$ 
Note that $\frak{k}$ is a reductive Lie algebra with one-dimensional center $\frak{so}(2, \mathbb{C}).$ 
For $m \neq 2,$ let $\Delta^+$ be a Borel-de Siebenthal positive root system of $\Delta,$ that is $\Delta^+$ is a 
positive root system such that the set $\Phi = \{\phi_1, \phi_2, \ldots , \phi_l \}$ of all simple roots contains exactly one non-compact root, say $\phi_1,$ and the coefficient 
$n_{\phi_1}(\delta)$ of $\phi_1$ in the highest root $\delta,$ when expressed as the sum of simple roots, is $1.$ Note that the highest root $\delta$ is non-compact. 
The Dynkin diagram of the semisimple part $[\frak{k}, \frak{k}]$ 
of $\frak{k}$ is the subdiagram of the Dynkin diagram of $\frak{g}$ consisting of the vertices $\{\phi_2, \ldots, \phi_l \}.$ If $m = 2,$ then $\frak{g}$ is not simple, and 
$\Delta = \Delta_n.$ In this case, let $\Delta^+$ be a positive root system of $\Delta,$ and the set of simple roots in $\Delta^+$ is $\Phi = \{\phi_1, \phi_2 \}.$

\begin{figure}[!h]
\begin{center} 
\begin{tikzpicture}

\filldraw [black] (0,0) circle [radius = 0.1]; 
\draw (1,0) circle [radius = 0.1]; 
\draw (2,0) circle [radius = 0.1]; 
\draw (3.5,0) circle [radius = 0.1]; 
\draw (4.5,0) circle [radius = 0.1]; 
\node [above] at (0.05,0.05) {$\phi_1$}; 
\node [above] at (1.05,0.05) {$\phi_2$}; 
\node [above] at (2.05,0.05) {$\phi_3$}; 
\node [above] at (3.75,0.05) {$\phi_{l-1}$}; 
\node [above] at (4.55,0.05) {$\phi_l$}; 
\node [left] at (-0.5,0) {$\frak{b}_l :$}; 
\node[ left] at (0.4,-0.5) {(\textrm{ if } $m=2l-1$)}; 
\draw (0.1,0) -- (0.9,0); 
\draw (1.1,0) -- (1.9,0);
\draw (2.1,0) -- (2.5,0); 
\draw [dotted] (2.5,0) -- (3,0); 
\draw (3,0) -- (3.4,0); 
\draw (4.4,0) -- (4.3,0.1); 
\draw (4.4,0) -- (4.3,-0.1); 
\draw (3.6,0.025) -- (4.35,0.025); 
\draw (3.6,-0.025) -- (4.35,-0.025); 

 \filldraw [black] (8,0) circle [radius = 0.1]; 
\draw (9,0) circle [radius = 0.1]; 
\draw (10,0) circle [radius = 0.1]; 
\draw (11.5,0) circle [radius = 0.1]; 
\draw (12.5,0) circle [radius = 0.1]; 
\draw (13.4,0.5) circle [radius = 0.1]; 
\draw (13.4,-0.5) circle [radius = 0.1]; 
\node [above] at (8.05,0.05) {$\phi_1$}; 
\node [above] at (9.05,0.05) {$\phi_2$}; 
\node [above] at (10.05,0.05) {$\phi_3$}; 
\node [above] at (11.35,0.05) {$\phi_{l-3}$}; 
\node [above] at (12.35,0.05) {$\phi_{l-2}$}; 
\node [above] at (13.45,0.55) {$\phi_{l-1}$}; 
\node [below] at (13.45,-0.55) {$\phi_l$}; 
\node [left] at (7.5,0) {$\frak{\delta}_l :$}; 
\node [left] at (8.8,-0.5) {(\textrm{ if } $m = 2l-2 \neq 2$)}; 
\draw (8.1,0) -- (8.9,0); 
\draw (9.1,0) -- (9.9,0); 
\draw (10.1,0) -- (10.5,0); 
\draw [dotted] (10.5,0) -- (11,0); 
\draw (11,0) -- (11.4,0); 
\draw (11.6,0) -- (12.4,0); 
\draw (12.6,0) -- (13.35,0.45); 
\draw (12.6,0) -- (13.35,-0.45); 

\filldraw [black] (0,-2) circle [radius = 0.1]; 
\filldraw [black] (1,-2) circle [radius = 0.1]; 
\node [above] at (0.05,-1.95) {$\phi_1$}; 
\node [above] at (1.05,-1.95) {$\phi_2$}; 
\node [left] at (-0.5,-2) {$\frak{\delta}_2 :$}; 
\node[ left] at (0.2,-2.5) {(\textrm{ if } $m=2$)}; 

\end{tikzpicture} 
\caption{Dynkin diagram of $\frak{g} \cong \frak{so}(m+2, \mathbb{C})$}\label{dynkin}
\end{center}
\end{figure}
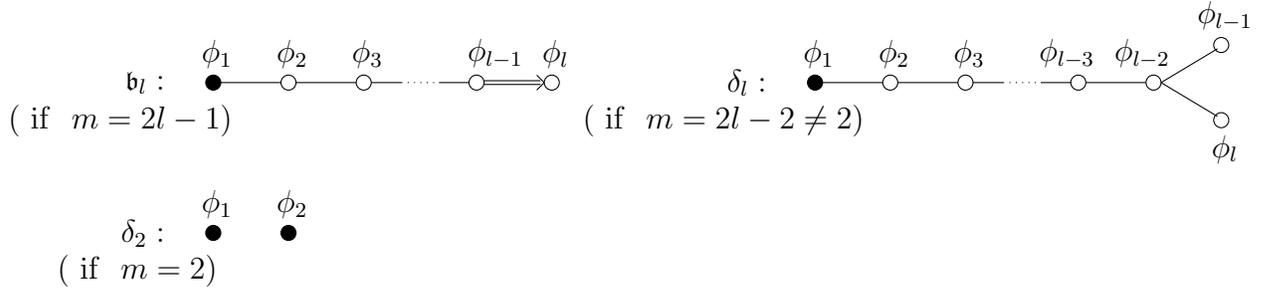 

In the diagram \ref{dynkin}, the non-compact roots are represented by black vertices. Let $\Delta_\frak{k}^+ = \Delta^+ \cap \Delta_\frak{k}, 
\Delta_n^+ = \Delta^+ \cap \Delta_n,$ and $\Delta_n^- = -\Delta_n^+$. Note that 
\[ \Delta_n^+ = 
\begin{cases}
\{\phi_1 + \cdots +\phi_i : 1 \le i \le l\} \cup \{\phi_1 + \cdots +\phi_{j-1}\\+2\phi_j+\cdots +2\phi_l : 2 \le j \le l\}  & \textrm{if } \frak{g} = \frak{b}_l (l \ge 1), \\
\{\phi_1 + \cdots +\phi_i : 1 \le i \le l-2\} \cup \{\xi_1, \xi_2, \phi_1+\cdots + \phi_l\}\\ \cup 
\{\phi_1 + \cdots +\phi_{j-1} +2\phi_j+\cdots +2\phi_{l-2}+\phi_{l-1}+\phi_l : 2 \le j \le l-2\}  & \textrm{if } \frak{g} = \frak{\delta}_l (l \ge 3), \\ 
\{\phi_1, \phi_2\} & \textrm{if } \frak{g} = \frak{\delta}_2; \\
\end{cases}
\] where $\xi_1 =\phi_1 + \cdots +\phi_{l-1}, \xi_2 = \phi_1 + \cdots +\phi_{l-2} + \phi_l.$ Define $\frak{p}_+=\sum_{\alpha \in \Delta_n^+}\frak{g}^\alpha$ and  
$\frak{p}_-=\sum_{\alpha \in \Delta_n^-} \frak{g}^\alpha.$ Then $\frak{p}=\frak{p}_+\oplus\frak{p}_-.$ In this article, we will continue with these notations, unless 
otherwise stated. 

Let $\Gamma$ be a torsion-free uniform lattice in $G.$ Then $\Gamma$ acts freely on the Hermitian symmetric space $X$ and $\Gamma \backslash X$ 
is a compact locally Hermitian symmetric space. A geometric cycle in $\Gamma \backslash X$ is a certain totally geodesic submanifold of $\Gamma \backslash X.$ 
Let $\sigma$ be an involutive automorphism of $G$ commuting with $\theta$ and $\sigma(\Gamma) = \Gamma.$ Let $G(\sigma)$ be the set of all fixed points of 
$\sigma$ in $G,\ K(\sigma) = K \cap G(\sigma),\ \Gamma(\sigma) = \Gamma \cap G(\sigma)$ and $X(\sigma) = G(\sigma) /K(\sigma).$ Then 
$\Gamma(\sigma)\backslash X(\sigma)$ is a totally geodesic submanifold of $\Gamma \backslash X,$ it is a geometric cycle in $\Gamma \backslash X$ and is called a 
special cycle. It is denoted by $C(\sigma, \Gamma).$ Let $X_u$ denote the compact dual of $X.$ 
The cohomology $H^* (X_u ; \mathbb{C})$ of $X_u$ can be identified with the cohomology 
$H^* (\Omega (X ; \mathbb{C})^G)$ of the complex $\Omega (X ; \mathbb{C})^G$ of $G$-invariant complex valued differential forms 
on $X.$ Again the cohomology $H^* (\Gamma \backslash X ; \mathbb{C})$ of $\Gamma \backslash X$ can be identified with the cohomology 
$H^* (\Omega (X ; \mathbb{C})^\Gamma)$ of the complex $\Omega (X ; \mathbb{C})^\Gamma.$
Since $\Gamma$ is a uniform lattice in $G,$ the inclusion $j_\Gamma : \Omega (X ; \mathbb{C})^G \hookrightarrow 
\Omega (X ; \mathbb{C})^\Gamma $ induces an injective map $j_\Gamma ^ * : H^*(\Omega (X ; \mathbb{C})^G) \hookrightarrow 
H^*(\Omega (X ; \mathbb{C})^\Gamma)$ (the so called Matsushima map). In this way, the elements of $H^* (X_u ; \mathbb{C})$ are represented by the $G$-invariant differential forms on $X.$ 

We have the irreducible decomposition 
\[L^2 (\Gamma \backslash G) \cong \widehat{\bigoplus}_{\pi \in \hat{G}} m(\pi , \Gamma ) H_\pi, \] 
due to Gelfand and Pyatetskii-Shapiro \cite{ggp}, \cite{gp}; where $L^2 (\Gamma \backslash G)$ is a Hilbert space of square integrable functions on 
$\Gamma \backslash G$ with respect to a $G$-invariant measure and it is a unitary representation of $G$ relative to the right translation action of $G$ on  
$L^2 (\Gamma \backslash G),\ H_\pi$ is the representation space of $\pi \in \hat{G},\ m(\pi , \Gamma ) \in \mathbb{N} \cup \{0\}$, the multiplicity of $\pi$ in $L^2 (\Gamma \backslash G)$. 
If $(\tau, \mathbb{C})$ is the trivial representation of $G$, then $m(\tau , \Gamma) = 1.$
A unitary representation $\pi \in \hat{G}$ is called an automorphic representation of $G$ with respect to $\Gamma$ if  $m(\pi , \Gamma ) > 0.$
The above decomposition leads to the Matsushima's isomorphism \cite{matsushima} 
\[
H^r (\Gamma \backslash X; \mathbb{C}) \cong \bigoplus_{\pi \in \hat{G}} m(\pi , \Gamma) H^r (\frak{g}, K; H_{\pi, K}),
\]
where $H_{\pi, K}$ is the $(\frak{g},K)$-module associated with $H_\pi.$ Now $H^* (\frak{g}, K; H_{\pi, K}) \neq 0$ {\it iff} $H_\pi$ is Vogan-Zuckerman's cohomologically 
induced module $A_\frak{q}$ with trivial infinitesimal character \cite{voganz}. Also the $(\frak{g},K)$-cohomology $H^* (\frak{g}, K; \mathbb{C})$ of the trivial representation 
$(\tau, \mathbb{C})$ is non-zero and this corresponds to $H^* (\Omega (X ; \mathbb{C})^G)$ via the Matsushima's isomorphism. Thus the elements of 
$H^* (\frak{g}, K; \mathbb{C})$ are represented by the $G$-invariant differential forms on $X.$ 

By a result of Millson and Raghunathan \cite{mira}, under certain conditions the 
Poincar\'{e} dual of the fundamental class $[C(\sigma, \Gamma)]$ of a special cycle $C(\sigma, \Gamma)$ is non-zero and is not represented by a $G$-invariant differential form 
on $X.$ This non-zero cohomology class may help to determine a non-trivial automorphic representation of $G$ via the  Matsushima's isomorphism. This technique is used 
by various authors for different semisimple Lie groups. See \cite{waldner}, \cite{sw}, \cite{schimpf}, \cite{mondal-sankaran1}, \cite{mondal-sankaran2}, \cite{paul}. In this 
article, we have considered all involutions of $G$ commuting with $\theta$ using \cite[Th. 1.3]{chuah} in the Table \ref{inv1}, Table \ref{inv2}. We have checked the conditions to get non-zero 
cohomology class in the Proposition \ref{prep} and the Proposition \ref{orp}. The main result of this article is as follows. 

\begin{theorem}\label{main} 
For each $\sigma \neq \tau_p (1 \le p \le l)$ in Table \ref{inv1} for $m = 2l-1,$ or for each $\sigma$ in Table \ref{inv2} for $m = 2l-2,$ 
there exists a torsion-free, $\langle \sigma , \theta \rangle$-stable, arithmetic, uniform lattice $\Gamma_\sigma$ of $G$ such that the cohomology 
classes defined by $[C(\sigma , \Gamma_\sigma)], [C(\sigma \theta, \Gamma_\sigma)]$ via Poincar\'e duality are non-zero and are not represented 
by $G$-invariant differential forms on $X.$
\end{theorem}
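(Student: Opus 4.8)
\emph{Proof proposal.} The plan is to deduce Theorem \ref{main} from the geometric criterion of Millson and Raghunathan \cite{mira}, whose hypotheses are verified for the relevant $\sigma$ in Proposition \ref{prep} and Proposition \ref{orp}; the substantive work that remains is to exhibit a single torsion-free, arithmetic, uniform lattice $\Gamma_\sigma$ in $G$ that is stable under the (order $\le 4$) group $\langle \sigma, \theta\rangle$ and is deep enough for \cite{mira} to apply simultaneously to the two cycles $C(\sigma,\Gamma_\sigma)$ and $C(\sigma\theta,\Gamma_\sigma)$.

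First I would realize $G$ as the identity component of the group of real points of $\mathbf{SO}(q)$ for a suitable quadratic form. Fix a totally real number field $k \neq \mathbb{Q}$ and a non-degenerate quadratic form $q$ on $k^{m+2}$ whose signature is $(2,m)$ at one archimedean place and definite at all others; then $\mathbf{G} := \mathbf{SO}(q)$ is $k$-anisotropic, so by the Godement compactness criterion $\mathbf{G}(\mathcal{O}_k)$ is a uniform arithmetic lattice, and its projection to the unique non-compact archimedean factor gives a uniform arithmetic lattice $\Gamma_0$ in $G = SO_0(2,m)$. The form $q$ should be chosen in a block shape matching the explicit matrices in Table \ref{inv1} (for $m = 2l-1$) and Table \ref{inv2} (for $m = 2l-2$), so that $\theta$ and the given $\sigma$ — each being, up to conjugacy, a product of coordinate sign changes and permutations preserving $q$ — are defined over $k$; then $\sigma\theta$ is defined over $k$ as well, $\Gamma_0$ is $\langle\sigma,\theta\rangle$-stable, and $G(\sigma),\, G(\sigma\theta)$ are the real points of $k$-anisotropic $k$-subgroups. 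Intersecting a torsion-free finite-index subgroup of $\Gamma_0$ (Selberg's lemma) with its at most four $\langle\sigma,\theta\rangle$-translates yields a torsion-free, $\langle\sigma,\theta\rangle$-stable, arithmetic, uniform lattice $\Gamma$; by anisotropy, $\Gamma\cap G(\sigma)$ and $\Gamma\cap G(\sigma\theta)$ are torsion-free uniform lattices in $G(\sigma)$ and $G(\sigma\theta)$, so $C(\sigma,\Gamma)$ and $C(\sigma\theta,\Gamma)$ are compact totally geodesic submanifolds of complementary dimension meeting transversally at the base point.

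Next I would feed Proposition \ref{prep} and Proposition \ref{orp} — which record exactly the dimension and orientation hypotheses of \cite{mira} for every $\sigma \neq \tau_p$ in Table \ref{inv1} and every $\sigma$ in Table \ref{inv2}, and which fail for the excluded $\tau_p$ — into the theorem of Millson and Raghunathan. This provides a finite-index subgroup of $\Gamma$, which I would again replace by a torsion-free $\langle\sigma,\theta\rangle$-stable one (same intersection trick) and rename $\Gamma_\sigma$, for which the two cycles meet in points all of one local sign; hence $[C(\sigma,\Gamma_\sigma)]\cdot[C(\sigma\theta,\Gamma_\sigma)] \neq 0$, both Poincar\'e duals are non-zero in $H^*(\Gamma_\sigma\backslash X;\mathbb{C})$, and — again by \cite{mira} — neither lies in the image of the Matsushima map, i.e. neither is represented by a $G$-invariant form on $X$. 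In the cases where $\dim_\mathbb{R} X(\sigma)$ is odd this last point is in fact automatic: $X_u$ is a compact Hermitian symmetric space, so $H^{\mathrm{odd}}(\Omega(X;\mathbb{C})^G) \cong H^{\mathrm{odd}}(X_u;\mathbb{C}) = 0$, while $\mathrm{PD}[C(\sigma,\Gamma_\sigma)]$ and $\mathrm{PD}[C(\sigma\theta,\Gamma_\sigma)]$ then both live in odd degree.

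The hard part will be the first step: producing one $k$-form of $G$ over which $\theta$, $\sigma$ and $\sigma\theta$ are simultaneously rational while the ambient group is $k$-anisotropic (to force cocompactness), and doing so compatibly with the explicit involutions of Table \ref{inv1} and Table \ref{inv2} in both parities of $m$. Once this rationality-plus-anisotropy package is in place, the cohomological conclusion follows formally from \cite{mira} together with Proposition \ref{prep} and Proposition \ref{orp}.
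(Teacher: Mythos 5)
Your overall architecture is the same as the paper's: both arguments reduce Theorem \ref{main} to the theorem of Millson and Raghunathan \cite[Th.~2.1]{mira}, with Proposition \ref{prep} supplying the rationality/lattice-stability of $\sigma$ and Proposition \ref{orp} the orientation hypothesis, and both then take $\Gamma_\sigma$ to be the $\langle\sigma,\theta\rangle$-stable finite-index subgroup that this theorem produces. Where you differ is in the arithmetic construction. You propose the quadratic-form model, $\mathbf{SO}(q)$ for a form on $k^{m+2}$ of signature $(2,m)$ at one real place and definite at the others; the paper instead uses Borel's construction \cite{Borel} on $\mathrm{Aut}(\frak{g}_0)$, defining the $F$-structure by the basis $B_F=B_{\frak k}\cup vB_n$ built from a Chevalley basis, checking rationality of $\sigma$ by computing $\sigma(E_\phi)=\pm E_\psi$ on Chevalley generators (this is the content of Proposition \ref{prep} and of the case-by-case work in Section \ref{inv}), and pulling the lattice back to $G$ as $\mathrm{Ad}^{-1}(\Gamma)$. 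Your added remark that for $\eta_1,\eta_2$ the non-invariance is automatic because the cycles live in odd degree while $H^{\mathrm{odd}}(X_u;\mathbb{C})=0$ is correct and is a small simplification the paper does not make explicit.

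The step you defer as ``the hard part'' is, however, a genuine gap and not pure bookkeeping. With the obvious anisotropic form $q=x_1^2+x_2^2-\sqrt{u}\,(x_3^2+\cdots+x_{m+2}^2)$, every involution of Tables \ref{inv1} and \ref{inv2} given by a diagonal sign matrix, or by an orthogonal matrix block-diagonal with respect to the $(2,m)$ split (so $\sigma_p,\tau_p,\tau'_p,\mu_p,\sigma'_0,\sigma_l,\sigma_{l-1},\sigma_0,\sigma_1$), is indeed a $k$-rational automorphism preserving $\mathbf{SO}(q)(\mathcal{O}_k)$. But the representatives that interchange the two signature blocks are not: for $m=2$ and $\eta_1(X)=J'_2XJ'_2$, conjugation by $J'_2$ carries $\mathbf{SO}(q)$ to $\mathbf{SO}(q')$ with $q'$ not proportional to $\pm q$, so $\eta_1$ is not defined over $k$ in this model. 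One must replace $J'_2$ by a rescaled matrix such as $\left(\begin{smallmatrix}0&\sqrt{u}\,I_2\\ I_2&0\end{smallmatrix}\right)$ and then re-verify that the new representative is still an involution in the same conjugacy class, still commutes with $\theta$, and still normalizes the $\mathcal{O}_k$-points; the same issue is what forces the paper to introduce a second Cartan subalgebra $\frak t'_0$ and a second lattice $\Gamma'$ for several involutions in the $m=2l-2$ case. So your plan is viable, but the deferred rationality verification is exactly the nontrivial content that Section \ref{lattice} and Proposition \ref{prep} carry out case by case, and as written your proof is incomplete precisely there.
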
 

Millson and Raghunathan \cite{mira}, Mondal and Sankaran \cite{mondal-sankaran2} have already considered the involutions $\sigma_p$ (in the Tables \ref{inv1}, \ref{inv2}), $\tau'_p$ (in the Table \ref{inv2}) 
to get non-zero cohomology classes associated with the corresponding special cycles. Jian-Shu Li \cite{li} has determined some non-zero cohomology classes associated with geometric cycles to 
identify certain $A_\frak{q}$ as automorphic representations of $O(p,q).$ The unitary equivalence classes of the representations $A_\frak{q}$ of $G$ and the Poincar\'{e}-Hodge polynomial of 
$H^*(\frak{g}, K; A_{\frak{q},K})$ are determined in \cite{pp}, using the general result in \cite{riba}, \cite{voganz}. Using these informations about $A_\frak{q},$ we have determined for each $A_\frak{q},$ the 
involutions $\sigma$ such that the corresponding cohomolgy class has no $A_\frak{q}$-component via Matsushima's isomorphism. The results are listed in the Table \ref{b-table} and Table \ref{d-table}.

\noindent
\section{Arithmetic uniform lattices of the Lie group $SO_0(2,m)$}\label{lattice} 

Let $B$ denote the Killing form of $\frak{g}$. 
For each linear function $\lambda$ on $\frak{h},$ there exists unique $H_\lambda \in \frak{h}$ such that 
\[ \lambda (H) = B (H, H_\lambda ) \textrm{ for all } H \in \frak{h}. \]
 Let $H_\alpha ^* = 2 H_\alpha /\alpha (H_\alpha)$ for all $\alpha \in \Delta.$ For each $\alpha \in \Delta$ there exists 
$E_\alpha \in \frak{g}^\alpha$ such that 
\begin{equation}\label{chevalley}
\begin{cases}
[H, E_\alpha] = \alpha (H) E_\alpha   \textrm{ for all } H \in \frak{h}, \\
[E_\alpha , E_{-\alpha} ] = H_\alpha ^* \textrm{ for all } \alpha \in \Delta , \\
[E_\alpha , E_\beta ] = 0 \textrm{ if } \alpha , \beta \in \Delta, \alpha + \beta \not\in \Delta , \alpha + \beta \neq 0 , \\
[E_\alpha , E_\beta ] = N_{\alpha , \beta} E_{\alpha + \beta} \textrm{ if } \alpha , \beta ,\alpha + \beta \in \Delta , \textrm{ where } \\
N_{\alpha , \beta} = -N_{-\alpha , -\beta} \in \mathbb{Z}, \textrm{ and } \\
\frak{k}_0 = \sum_{\phi \in \Phi} \mathbb{R} (i  H_\phi ^*) \oplus \sum_{\alpha \in \Delta_\frak{k}^+} (\mathbb{R} (E_\alpha - E_{-\alpha}) \oplus 
\mathbb{R} i(E_\alpha + E_{-\alpha})), \\ 
\frak{p}_0 = \sum_{\alpha \in \Delta_n^+} (\mathbb{R} i(E_\alpha - E_{-\alpha}) \oplus \mathbb{R} (E_\alpha + E_{-\alpha})). 
\end{cases}
\end{equation}
Note that $\{ H_\phi ^* : \phi \in \Phi \} \cup \{E_\alpha : \alpha \in \Delta\}$ is a Chevalley basis of $\frak{g}.$ 
We have $\theta(E_\alpha) = p_\alpha E_\alpha,$ where $p_\alpha = 1$ if $\alpha \in \Delta_\frak{k}$ and $p_\alpha = -1$ if $\alpha \in \Delta_n.$
Let $X_\alpha = E_\alpha - E_{-\alpha} , Y_\alpha = i(E_\alpha + E_{-\alpha})$ for all $\alpha \in \Delta^+,$ and $\frak{u} = \frak{k}_0 \oplus i\frak{p}_0.$ 
Then $\frak{u}$ is a compact real form of $\frak{g}$ dual to $\frak{g}_0.$ 

Since $G /Z \cong \textrm{Ad}(G)$ (where $Z$ denotes the centre of $G$), $Z$ is finite, $\textrm{Ad}(G)$ is the identity component of the Lie group $\textrm{Aut}(\frak{g}_0)$ and 
$\textrm{Aut}(\frak{g}_0)$ has finitely many components, it is sufficient to determine arithmetic uniform lattices of $\textrm{Aut}(\frak{g}_0)$ and 
we will follow the construction of Borel \cite{Borel} for this purpose. Let $F$ be a totally real algebraic number field of degree $>1,$ 
$\mathcal{O}$ be the ring of integers of $F$ and $S$ be the set of all infinite places of $F.$ We may choose $u \in F$ such that $u >0, s(u) < 0$ for all $s \in S \setminus \{id\}$
and $F = \mathbb{Q}(u).$ Let $v = \sqrt{u}$ and $v_s = \sqrt{-s(u)}$ for all $s \in S \setminus \{id\}.$ 
Note that $B_\frak{k} = \{ iH_\phi ^* : \phi \in \Phi \} \cup 
\{X_\alpha, Y_\alpha : \alpha \in \Delta_\frak{k}^+ \}$ is a basis of $\frak{k}_0$ and 
$B_n =  \{iX_\alpha, iY_\alpha : \alpha \in \Delta_n^+ \}$ is a basis of $\frak{p}_0.$ 
Then \begin{equation}\label{gbasis}
B = B_\frak{k} \cup B_n
\end{equation} 
is a basis of $\frak{g}_0$ consisting of vectors belonging to either $\frak{k}_0$ or to $\frak{p}_0$, 
with respect to which the structural constants are all rational numbers. 

Let $B_F = B_\frak{k} \cup vB_n,$ $\frak{m}$ be the vector space over $F$ spanned by $B_F$ and $\frak{m}^s$ be the vector space over $F^s = s(F)$ spanned by 
$B_\frak{k} \cup iv_sB_n,$ for all $s \in S-\{id\}$. Then $\frak{m}$ is a Lie algebra over $F,$ $\frak{m}^s$ is a Lie algebra over $F^s, \frak{m}^s$ is 
the conjugate of $\frak{m}$ by $s$ and $\frak{m} \otimes \mathbb{R} = \frak{g}_0 ,\  \frak{m}^s \otimes \mathbb{R} = \frak{u}$ 
for all $s \in S-\{id\}.$ We now identify $\textrm{Aut}(\frak{g})$ with an algebraic subgroup $G'$ of $ GL (N, \mathbb{C})$ 
($N = \frac{(m+1)(m+2)}{2} = \textrm{dim} (\frak{g}_0)$) defined over $F,$ via 
the basis $B_F.$ Then $\textrm{Aut}(\frak{g}_0)$ is identified with $G'_\mathbb{R},$ the group of real matrices in $G'.$ The group 
$(G'_\mathbb{R})^s$ is then $\textrm{Aut}(\frak{u}),$ hence compact,  for all $s \in S-\{id\}.$ Let $\Gamma = G'_{\mathcal{O}} = G' \cap GL(N, \mathcal{O}).$ As 
$(G'_\mathbb{R})^s$ is compact for all $s \in S-\{id\}, \Gamma$ is a cocompact arithmetic lattice in $\textrm{Aut}(\frak{g}_0),$ by  Weil's restriction of scalars. 

Since $\theta \in \Gamma, \theta \Gamma {\theta} = \Gamma.$ 
Also $\frak{g}_0 \cong \textrm{ad}(\frak{g}_0)$ and the involution of 
$\textrm{ad}(\frak{g}_0)$ corresponding to the Cartan involution $\theta$ of 
$\frak{g}_0$ is given by $\textrm{ad}(X) \mapsto \textrm{ad}(\theta X) = \theta \textrm{ad}(X)
{\theta},$ which is denoted by the same notation $\theta.$ Then $\theta$ is the differential at 
identity of the Lie group automorphism of $\textrm{Aut}(\frak{g}_0)$ 
given by $\sigma \mapsto \theta \sigma {\theta},$ which is also denoted by $\theta.$ We have $\textrm{Ad} 
\circ \theta = \theta \circ \textrm{Ad}.$ So if $\Gamma$ is a cocompact arithmetic lattice of 
$\textrm{Aut}(\frak{g}_0)$ constructed as before, then $\theta (\textrm{Ad}^{-1}(\Gamma)) 
= \textrm{Ad}^{-1}(\Gamma).$ In general, if $\sigma$ is an automorphism of $G$ whose differential at identity (denoted by the same notation) 
$\sigma \in \Gamma,$ then $\sigma$ is defined over $F$ and $\textrm{Ad}^{-1}(\Gamma)$ is invariant under $\sigma.$ 

Now we will give examples of maximal abelian subalgebras 
$\frak{t}_0$ (for any $m$), $\frak{t}'_0$ (if $m = 2l-2$) of $\frak{k}_0$ so that 
we will get Cartan subalgebras $\frak{h} = \frak{t}_0^\mathbb{C},\ \frak{h}' = {\frak{t}'}_0^\mathbb{C}$ of $\frak{g},$  
and the bases $B, B'$ of $\frak{g}_0$ as in \ref{gbasis}. Next we identify 
$\textrm{Aut}(\frak{g})$ with algebraic subgroups $G_1$ and $G_2$ of $ GL (N, \mathbb{C})$ via $B_F$ and $B'_F.$ Thus we get two $\theta$-invariant cocompact 
arithmetic lattices $\textrm{Ad}^{-1}(\Gamma),\ \textrm{Ad}^{-1}(\Gamma')$ of $G.$ Before defining $\frak{t}_0$ and $\frak{t}'_0,$ we will fix some notations. 
Let $E_{ij}$ be the elementary matrix whose $(i,j)$-th entry is $1$ and all other entries are $0,$ $F_{jk}=E_{jk}-E_{kj}$ for all $1\leq j < k \leq m+2,$ 
and $H_j=E_{2j-1,2j}-E_{2j,2j-1} = F_{2j-1,2j}$ for all $1\leq j \leq l,$ where $m = 2l-1$ or $2l-2.$ 

 Let $m = 2l-1$ or $2l-2,$ define $\frak{t}_0 =\sum_{1 \le j \le l} \mathbb{R}H_j$ (see \cite[Ch. III, \S 8]{helgason}). Then $\frak{t}_0$ is a maximal abelian subalgebra of $\frak{k}_0.$
Let $\frak{h} = \frak{t}_0^\mathbb{C}$ and for $1 \le j \le l,$  $e_j\in \frak{h}^*$ be defined by $e_j(H_k)=-i\delta_{jk},\ \delta_{jk}=1$ if $j=k$, otherwise $\delta_{jk}=0$.
Then \[\Delta(\frak{g}, \frak{h}) = 
\begin{cases}
\{\pm(e_j-e_k), \pm(e_j+e_k) : 1 \le j < k \le l \} \cup \{\pm e_j : 1 \le j \le l \} \textrm{ if } m = 2l-1,\\ 
\{\pm(e_j-e_k), \pm(e_j+e_k) : 1 \le j < k \le l \} \textrm{ if } m = 2l-2; 
\end{cases} \]
and 
\[\Delta^+ = 
\begin{cases}
\{e_j-e_k, e_j+e_k : 1 \le j < k \le l \} \cup \{e_j : 1 \le j \le l \} \textrm{ if }  m = 2l-1,\\
\{e_j-e_k, e_j+e_k : 1 \le j < k \le l \} \textrm{ if } m = 2l-2,
\end{cases} \] is a positive root system of $\Delta(\frak{g}, \frak{h})$ with the set of simple roots $\Phi = \{\phi_1, \phi_2, \ldots ,\phi_l \},$ where 
$\phi_j=e_j-e_{j+1}$ for all $1\leq j \leq l-1$ and $\phi_l=e_l$ if $m = 2l-1,\ \phi_l=e_{l-1} + e_l$ if $m = 2l-2.$ 
Define 
\[ G_{1k}^+ = f(F_{1,2k-1}+F_{2,2k}+i(F_{1,2k}-F_{2,2k-1})) \textrm{ for } 2 \le k \le l, \]
\[ G_{jk}^+ = F_{2j-1,2k-1}+F_{2j,2k}+i(F_{2j-1,2k}-F_{2j,2k-1}) \textrm{ for } 2 \le j < k \le l, \] 
\[ G_{1k}^- = f(-F_{1,2k-1}+F_{2,2k}+i(F_{1,2k}+F_{2,2k-1})) \textrm{ for } 2 \le k \le l, \]
\[G_{jk}^- =-F_{2j-1,2k-1}+F_{2j,2k}+i(F_{2j-1,2k}+F_{2j,2k-1}) \textrm{ for } 2 \le j < k \le l.\]
If $m = 2l-1,$ define 
\[D_1^+ = f(-F_{1,2l+1} + iF_{2,2l+1}) \textrm{ and } \]
\[D_j^+ = - F_{2j-1,2l+1} + iF_{2j,2l+1} \textrm{ for all } 2 \le j \le l. \]
We have $\frak{g}^{(e_j-e_k)} = \mathbb{C}G_{jk}^+,\ \frak{g}^{-(e_j-e_k)} = \mathbb{C} \overline{G_{jk}^+},\ \frak{g}^{(e_j+e_k)} = \mathbb{C}G_{jk}^-, 
\ \frak{g}^{-(e_j+e_k)} = \mathbb{C}\overline{G_{jk}^-}$ for $1 \le j < k \le l.$ If $m = 2l-1,\ \frak{g}^{e_j} = \mathbb{C}D_j^+,\ \frak{g}^{-e_j} = \mathbb{C}\overline{D_j^+}$ 
for all $1 \le j \le l.$ We have $\frak{g}^{\phi_j} \subset \frak{k}$ for $2 \le j \le l$ and 
$\frak{g}^{\phi_1} \subset \frak{p}.$ Also the highest root $\delta =e_1 + e_2$ and $n_{\phi_1}(\delta) = 1.$ 
Thus $\phi_1$ is the unique non-compact simple root and $n_{\phi_1}(\delta) = 1.$ Consequently, $\Delta^+$ is a 
Borel-de Siebenthal positive root system of $\Delta(\frak{g},\frak{h}).$ We have $H_{\phi_j}^*= i(H_j-H_{j+1})$ for all $1\leq j \leq l-1,$ and $H_{\phi_l}^*=2iH_l$ if 
$m = 2l-1,\ H_{\phi_l}^*=i(H_{l-1} + H_l)$ if $m = 2l-2.$ Also 
\[
\begin{cases} 
\{i(H_j-H_{j+1}), 2iH_l : 1 \le j \le l-1\} \cup \{\frac{1}{2}G_{jk}^+, - \frac{1}{2}\overline{G_{jk}^+}, \frac{1}{2}G_{jk}^-, - \frac{1}{2}\overline{G_{jk}^-} : 2 \le j < k \le l \} \\ 
\cup \{D_j^+, -\overline{D_j^+} : 2 \le j \le l \} \cup \{\frac{1}{2}G_{1k}^+,  \frac{1}{2}\overline{G_{1k}^+}, \frac{1}{2}G_{1k}^-,  \frac{1}{2}\overline{G_{jk}^-} : 2 \le k \le l \} \\ 
\cup \{D_1^+, \overline{D_1^+}\} \textrm{ if } m = 2l-1, \\ 
\{i(H_j-H_{j+1}), i(H_{l-1}+H_l) : 1 \le j \le l-1\} \cup \{\frac{1}{2}G_{jk}^+, - \frac{1}{2}\overline{G_{jk}^+}, \frac{1}{2}G_{jk}^-, - \frac{1}{2}\overline{G_{jk}^-} : \\ 2 \le j < k \le l \} 
 \cup \{\frac{1}{2}G_{1k}^+,  \frac{1}{2}\overline{G_{1k}^+}, \frac{1}{2}G_{1k}^-,  \frac{1}{2}\overline{G_{jk}^-} : 2 \le k \le l \}\textrm{ if } m = 2l-2,
\end{cases}
\] is a Chevalley basis of $\frak{g}$ satisfying \ref{chevalley}. 

Let $m = 2l-2$ and define $\frak{t}'_0 =\mathbb{R}H_1 \oplus \sum_{2 \le j \le l} \mathbb{R}F_{1+j,l+j}.$ Then $\frak{t}'_0$ is a maximal abelian subalgebra of $\frak{k}_0$ and 
$\frak{h}' = {\frak{t}'}_0^\mathbb{C}$ is a Cartan subalgebra of $\frak{g}.$ For $1 \le j \le l,$  $\epsilon_j \in (\frak{h}')^*$ be defined by $\epsilon_1(H_1)=-i, \epsilon_1(F_{1+k,l+k}) = 0$ and 
$\epsilon_j(H_1)= 0, \epsilon_j(F_{1+k,l+k}) = i\delta_{jk}$ for all $2 \le j,k \le l.$ Then $\Delta(\frak{g}, \frak{h}') = 
\{\pm(\epsilon_j-\epsilon_k), \pm(\epsilon_j+\epsilon_k): 1 \le j < k \le l \} \textrm{ and}$  
${\Delta'}^+ = \{\epsilon_j-\epsilon_k, \epsilon_j+\epsilon_k: 1 \le j < k \le l \}$
is a positive root system of $\Delta(\frak{g}, \frak{h}')$ with the set of simple roots $\Phi' = \{\phi'_1, \phi'_2, \ldots ,\phi'_l \},$ where 
$\phi'_j = \epsilon_j-\epsilon_{j+1}$ for all $1\leq j \leq l-1$ and $\phi'_l = \epsilon_{l-1} + \epsilon_l.$ 
Define 
\[ {G'}_{1k}^+ = f(F_{1,l+k} + F_{2,1+k}+i(F_{1,1+k}-F_{2,l+k})) \textrm{ for } 2 \le k \le l, \]
\[ {G'}_{jk}^+ = -F_{1+j,1+k}-F_{l+j, l+k}+i(F_{1+j,l+k} + F_{1+k,l+j}) \textrm{ for } 2 \le j < k \le l, \] 
\[ {G'}_{1k}^- = f(-F_{1,l+k} + F_{2,1+k}+i(F_{1,1+k}+F_{2,l+k})) \textrm{ for } 2 \le k \le l, \]
\[{G'}_{jk}^- = -F_{1+j,1+k} + F_{l+j, l+k}+i(-F_{1+j,l+k} + F_{1+k,l+j})\textrm{ for } 2 \le j < k \le l.\]
We have $\frak{g}^{(\epsilon_j-\epsilon_k)} = \mathbb{C}{G'}_{jk}^+,\ \frak{g}^{-(\epsilon_j-\epsilon_k)} = \mathbb{C} \overline{{G'}_{jk}^+},\ 
\frak{g}^{(\epsilon_j+\epsilon_k)} = \mathbb{C}{G'}_{jk}^-, \ \frak{g}^{-(\epsilon_j+\epsilon_k)} = \mathbb{C}\overline{{G'}_{jk}^-}$ for $1 \le j < k \le l.$ 
We have $\frak{g}^{\phi'_j} \subset \frak{k}$ for $2 \le j \le l$ and 
$\frak{g}^{\phi'_1} \subset \frak{p}.$ Also the highest root $\delta' =\epsilon_1 + \epsilon_2$ and $n_{\phi'_1}(\delta') = 1.$ 
Thus $\phi'_1$ is the unique non-compact simple root and $n_{\phi'_1}(\delta') = 1.$ Consequently, ${\Delta'}^+$ is a 
Borel-de Siebenthal positive root system of $\Delta(\frak{g},\frak{h}').$ We have $H_{\phi'_1}^*= i(H_1+F_{3,l+2}),$ 
$H_{\phi'_j}^*= -i(F_{1+j,l+j} - F_{2+j,l+1+j})$ for all $2\leq j \leq l-1$ and $H_{\phi'_l}^*=-i(F_{l,2l-1} + F_{l+1,2l})$. Also 
$\{ i(H_1+F_{3,l+2}), -i(F_{1+j,l+j} - F_{2+j,l+1+j}), -i(F_{l,2l-1} + F_{l+1,2l}) : 2 \le j \le l-1\} \cup $ \\ 
$\{\frac{1}{2}{G'}_{jk}^+, - \frac{1}{2}\overline{{G'}_{jk}^+}, \frac{1}{2}{G'}_{jk}^-, - \frac{1}{2}\overline{{G'}_{jk}^-} : 2 \le j < k \le l \}
\cup \{\frac{1}{2}{G'}_{1k}^+,  \frac{1}{2}\overline{{G'}_{1k}^+}, \frac{1}{2}{G'}_{1k}^-,  \frac{1}{2}\overline{{G'}_{jk}^-} : 2 \le k \le l \}$ 
is a Chevalley basis of $\frak{g}$ satisfying \ref{chevalley}. 

The associated bases $B, B'$ of $\frak{g}_0$ satisfying \ref{gbasis} and the corresponding cocompact arithmetic lattices 
$\Gamma, \Gamma'$ of Aut$(\frak{g}_0)$ will be used in the latter sections. 

\begin{remark} 
Note that $\frak{t}'_0$ is a maximal abelian subalgebra of $\frak{k}_0$ for $m = 2l-1$ also. But we need it only for $m = 2l-2$ and so 
defined it accordingly. 
\end{remark}

\noindent
\section{Involutions of $SO_0(2,m)$ commuting with $\theta$}\label{inv}
  
Let $\frak{g}_0$ be a real simple Lie algebra and $\frak{g}_0 = \frak{k}_0 \oplus \frak{p}_0$ be a 
Cartan decomposition with the Cartan involution $\theta.$ Assume that $\frak{k}_0$ has non-zero centre $\frak{z}.$ Then rank$(\frak{g}_0) =$ rank$(\frak{k}_0)$ and 
the Riemannian globally symmetric space associated with the pair $(\frak{g}_0, \frak{k}_0)$ is Hermitian symmetric. 
Let $\frak{t}_0$ be a maximal abelian subalgebra of $\frak{k}_0.$ Then $\frak{h} = \frak{t}_0^\mathbb{C}$ is a Cartan subalgebra of $\frak{k}=\frak{k}_0^\mathbb{C}$ 
as well as of $\frak{g} = \frak{g}_0^\mathbb{C}.$ Let $\Delta^+$ is a Borel-de Siebenthal positive root system of 
$\Delta = \Delta(\frak{g}, \frak{h})$ and $\Phi$ be the set of all simple roots in $\Delta^+.$ 
By an involution, we mean a Lie algebra automorphism of arder $2.$ 
The involutions of $\frak{g}_0$ commuting with $\theta$ (up to conjugation by an element of Aut$(\frak{g}_0)$) 
are classified in \cite{chuah}. We will describe briefly the process of the classification in \cite{chuah}. If $\frak{g}_0 = \frak{so}(2,m),$ up to conjugation, 
the involutions of $\frak{g}_0$ which are commuting with $\theta$ and not $\theta,$ are deduced in Tables \ref{inv1} and \ref{inv2}. 
  
Let $D^1$ be the extended dynkin diagram of $\frak{g}$ that is, it is a diagram where the extra vertex $-\delta$ has been added to the Dynkin diagram of $\frak{g}$ with the vertices $\phi \in \Phi$ 
for which $B(H_\delta, H_\phi) > 0.$ The vertices of $D^1$ are $\phi (\phi \in \Phi)$ and $-\delta.$ Each vertex $\psi \in \Phi \cup \{-\delta\}$ is associated with a positive integer 
$a_\psi,$ where $a_{-\delta} = 1$ and $a_\phi = n_\phi(\delta),$ the coefficient of $\phi$ in $\delta,$ when expressed as a sum of elements of $\Phi,$ for all $\phi \in \Phi.$ 
The {\it affine Vogan diagram} \cite{chuah} associated with $\theta$ is the extended  Dynkin diagram $D^1$ with the compact roots in $\Phi \cup \{-\delta\}$ are painted white and 
the non-compact roots in $\Phi \cup \{-\delta\}$ are painted black. Note that there are exactly two black vertices namely, $\nu$ and $-\delta.$ 
The centre $\frak{z}$ of the subalgebra $\frak{k}$ is one-dimensional and 
the Dynkin diagram $D_\frak{k}$ of the derived algebra $[\frak{k}, \frak{k}]$ is  the subdiagram 
of the affine Vogan diagram consisting of the vertices $\{\psi : \psi \textrm{ is painted white}\} = \Phi \setminus \{\nu\}.$ \cite[Th. 5.15(ii), Ch. X]{helgason}. 

Now each vertex $\psi$ of $D_\frak{k}$ is associated to a 
canonical positive integer $c_\psi$ such that on each component $C$ of $D_\frak{k},$  
$\sum_{\textrm{vertices of } C} c_\psi \psi =$ the highest root of the simple ideal associated with $C.$ A {\it Vogan diagram} 
of $D_\frak{k}$ is a diagram automorphism of order $1$ or $2$ on $D_\frak{k}$ such that the vertices fixed by the automorphism are uncircled or circled. 
The trivial Vogan diagram is the Vogan diagram of $D_\frak{k}$ with the trivial diagram automorphism and the vertices are all uncircled.  
The involutions of $\frak{k}_0$ (up to conjugation) are in bijective correspondence with the non-trivial Vogan diagrams of $D_\frak{k}$ with at most one circled vertex 
$\psi$ in each component of $D_\frak{k},$ where $c_\psi = 1$ or $2$ \cite{bds}, \cite[Th. 6.96]{knappb}. 

  Let inv$(\frak{g}_0)$ (resp. inv$(\frak{k}_0)$) be the set of all involutions of $\frak{g}_0$ (resp. $\frak{k}_0$). If 
$\sigma \in \textrm{inv}(\frak{g}_0)$ be such that $\sigma \theta = \theta \sigma$ and $\sigma \neq \theta,$ then $\sigma \in \textrm{inv}(\frak{k}_0).$
Conversely assume that $\sigma \in \textrm{inv}(\frak{k}_0).$ Corresponding to $\sigma,$ we have a non-trivial Vogan diagram of $D_\frak{k}$ with at most one circled vertex 
$\psi$ in each component of $D_\frak{k},$ where $c_\psi = 1$ or $2.$ Note that a vertex $\psi$ is uncircled (resp. circled) if $\sigma(\psi)=\psi$ and $\sigma = id$ 
(resp. $\sigma = -id$) on the corresponding root space. If the diagram automorphism of the Vogan diagram of $D_\frak{k}$ can be extended to the affine Vogan 
diagram of $\frak{g}$ corresponding to $\theta,$ then there are the following possibilities: \\ 
(a) If $\sigma|_{\frak{z}} = id,$ then the two black vertices are fixed by $\sigma.$  \\ 
(b)  If $\sigma|_{\frak{z}} = -id,$ then the two black vertices are interchanged by $\sigma.$ \\
See \cite[Prop. 2.4]{chuah}. An {\it almost double Vogan diagram} \cite{chuah} is the affine Vogan diagram $D^1$ with a diagram automorphism of order $1$ or $2$ preserving 
the vertex colors, such that the vertices fixed by the automorphism are uncircled or circled. Thus the diagram automorphism of the Vogan diagram of $D_\frak{k}$ can be extended to the 
affine Vogan diagram $D^1$ {\it iff}  the Vogan diagram of $D_\frak{k}$ can be extended to an almost double Vogan diagram. 
Now if the Vogan diagram of $D_\frak{k}$ can be extended to an almost double Vogan diagram, there are four possibilities for a vertex $\psi$ in the almost double 
Vogan diagram: \\
an uncircled vertex $\psi$ fixed by $\sigma;$ \\ 
a circled vertex $\psi$ fixed by $\sigma;$ \\ 
distinct vertices $\{\psi, \sigma(\psi)\}$ which are adjacent; \\ 
distinct vertices $\{\psi, \sigma(\psi)\}$ which are not adjacent. \\ 
Let $\mathcal{O}$ denote the vertices of the second and third type, that is $\mathcal{O} = \{\textrm{circled vertices}\} \cup \{\psi, \sigma (\psi) : 
\sigma(\psi) \neq \psi, \psi \textrm{ and } \sigma(\psi) \textrm{ are adjacent}  \}.$ We have an involution of $\frak{k}_0$ can be extended to an involution of $\frak{g}_0$ 
with $\sigma \theta = \theta \sigma$ {\it iff} the Vogan diagram of $D_\frak{k}$ can be extended to an almost double Vogan diagram  
and $\sum_{\mathcal{O}} a_\psi =$ even \cite[Th. 1.3]{chuah}. 

 Let $\sigma \in \textrm{inv}(\frak{g}_0)$ with $\sigma \theta = \theta \sigma.$  
Then $\sigma$ induces a holomorphic involution on the Hermitian symmetric space associated with the pair $(\frak{g}_0, \frak{k}_0)$ 
{\it iff} $\sigma|_{\frak{z}} = id,$ for the complex structure of this Hermitian symmetric space is induced from an element of $\frak{z}.$ Thus 
 $\sigma$ induces a holomorphic involution on the Hermitian symmetric space associated with the pair $(\frak{g}_0, \frak{k}_0)$ 
 {\it iff} $\sigma$ fixed the two black vertices $\nu$ and $-\delta.$ 
 
 If $\frak{g}_0 = \frak{so}(2,m)\ (m \neq 2),$ the highest root \[ \delta = 
\begin{cases} 
\phi_1 + 2\phi_2 + \cdots + 2\phi_l \textrm{ if } m = 2l-1, \\ 
\phi_1 + 2\phi_2 + \cdots + 2\phi_{l-2} + \phi_{l-1} + \phi_l  \textrm{ if } m = 2l-2. 
\end{cases} 
\]
In this case, the Vogan diagrams of involutions of $\frak{k}_0$ and the almost double Vogan diagrams of their 
extensions (if possible) to involutions of $\frak{g}_0$ are described in the Tables \ref{inv1} and \ref{inv2}.  

\begin{table}[!h]
\caption{Involutions of  $\frak{so}(2, 2l-1)$ which are commuting with $\theta$ but not $\theta$}\label{inv1}
\begin{tabular}{||c|c||}
\hline
inv$(\mathfrak{k}_0)$ & inv$(\mathfrak{g}_0)$  \\
    
\hline
\hline

\begin{tabular}{c}   
\begin{tikzpicture}
\draw (1,0) circle [radius = 0.1]; 
\draw (2,0) circle [radius = 0.1]; 
\draw  (3.25,0) circle [radius = 0.1];
\draw  (3.25,0) circle [radius = 0.15];
\draw (4.5,0) circle [radius = 0.1]; 
\draw (5.5,0) circle [radius = 0.1]; 
\node [above] at (1.05,0.05) {$\phi_2$}; 
\node [above] at (2.05,0.05) {$\phi_3$}; 
\node [above] at (3.30,0.05) {$\phi_p$};
\node [above] at (4.75,0.05) {$\phi_{l-1}$}; 
\node [above] at (5.55,0.05) {$\phi_l$}; 
\node [left] at (0.5,0) {$\sigma_p|_{[\mathfrak{k},\mathfrak{k}]} :$};
\draw (1.1,0) -- (1.9,0);
\draw (2.1,0) -- (2.5,0); 
\draw [dotted] (2.5,0) -- (3,0);
\draw (2.80,0) -- (3.1,0);
\draw (3.40,0) -- (3.67,0);
\draw [dotted](3.67,0) -- (3.95,0);
\draw (4,0) -- (4.4,0); 
\draw (5.4,0) -- (5.3,0.1); 
\draw (5.4,0) -- (5.3,-0.1); 
\draw (4.6,0.025) -- (5.35,0.025); 
\draw (4.6,-0.025) -- (5.35,-0.025);
 \end{tikzpicture}    \\
 
$\sigma_p|_{\mathfrak{z}} = id \textrm{ for all } 2\leq p \leq l,\ l \ge 2.$ \\
 
\end{tabular} & 

\begin{tabular}{c}  
\begin{tikzpicture}
\filldraw [black] (0,-.5) circle [radius = 0.1]; 
\filldraw[black] (0,.5) circle [radius = 0.1];
\draw (1,-0.08) -- (0,-.5);
\draw (1,0.08) -- (0,.5);
\node [above] at (-.34,0.3) {$\phi_1$};
\node [left] at (0,-.5) {$-\delta $};

\draw (1,0) circle [radius = 0.1]; 
\draw (2,0) circle [radius = 0.1]; 
\draw  (3.25,0) circle [radius = 0.1];
\draw  (3.25,0) circle [radius = 0.15];
\draw (4.5,0) circle [radius = 0.1]; 
\draw (5.5,0) circle [radius = 0.1]; 
\node [above] at (1.05,0.05) {$\phi_2$}; 
\node [above] at (2.05,0.05) {$\phi_3$}; 
\node [above] at (3.30,0.05) {$\phi_p$};
\node [above] at (4.75,0.05) {$\phi_{l-1}$}; 
\node [above] at (5.55,0.05) {$\phi_l$}; 
\node [left] at (-0.5,0) {$\sigma_p :$};
\draw (1.1,0) -- (1.9,0);
\draw (2.1,0) -- (2.5,0); 
\draw [dotted] (2.5,0) -- (3,0);
\draw (2.80,0) -- (3.1,0);
\draw (3.40,0) -- (3.67,0);
\draw [dotted](3.67,0) -- (3.95,0);
\draw (4,0) -- (4.4,0); 
\draw (5.4,0) -- (5.3,0.1); 
\draw (5.4,0) -- (5.3,-0.1); 
\draw (4.6,0.025) -- (5.35,0.025); 
\draw (4.6,-0.025) -- (5.35,-0.025);
 \end{tikzpicture}  
 \\
 \begin{tikzpicture}
 \filldraw [black] (0,-.5) circle [radius = 0.1]; 
 \draw [black] (0,-.5) circle [radius = 0.15];
\filldraw[black] (0,.5) circle [radius = 0.1];
\draw[black] (0,.5) circle [radius = 0.15];
\draw (1,-0.08) -- (0,-.5);
\draw (1,0.08) -- (0,.5);
\node [above] at (-.34,0.3) {$\phi_1$};
\node [left] at (-.04,-.5) {$-\delta $};

\draw (1,0) circle [radius = 0.1]; 
\draw (2,0) circle [radius = 0.1]; 
\draw  (3.25,0) circle [radius = 0.1];
\draw  (3.25,0) circle [radius = 0.15];
\draw (4.5,0) circle [radius = 0.1]; 
\draw (5.5,0) circle [radius = 0.1]; 
\node [above] at (1.05,0.05) {$\phi_2$}; 
\node [above] at (2.05,0.05) {$\phi_3$}; 
\node [above] at (3.30,0.05) {$\phi_p$};
\node [above] at (4.75,0.05) {$\phi_{l-1}$}; 
\node [above] at (5.55,0.05) {$\phi_l$}; 
\node [left] at (-0.5,0) {$\sigma_p\theta :$};
\draw (1.1,0) -- (1.9,0);
\draw (2.1,0) -- (2.5,0); 
\draw [dotted] (2.5,0) -- (3,0);
\draw (2.80,0) -- (3.1,0);
\draw (3.40,0) -- (3.67,0);
\draw [dotted](3.67,0) -- (3.95,0);
\draw (4,0) -- (4.4,0); 
\draw (5.4,0) -- (5.3,0.1); 
\draw (5.4,0) -- (5.3,-0.1); 
\draw (4.6,0.025) -- (5.35,0.025); 
\draw (4.6,-0.025) -- (5.35,-0.025);
\end{tikzpicture}

\\

\end{tabular} \\ 

\hline 

\begin{tabular}{c} 
\begin{tikzpicture}
 \draw (1,0) circle [radius = 0.1]; 
\draw (2,0) circle [radius = 0.1]; 
\draw  (3.25,0) circle [radius = 0.1];
\draw  (3.25,0) circle [radius = 0.15];
\draw (4.5,0) circle [radius = 0.1]; 
\draw (5.5,0) circle [radius = 0.1]; 
\node [above] at (1.05,0.05) {$\phi_2$}; 
\node [above] at (2.05,0.05) {$\phi_3$}; 
\node [above] at (3.30,0.05) {$\phi_p$};
\node [above] at (4.75,0.05) {$\phi_{l-1}$}; 
\node [above] at (5.55,0.05) {$\phi_l$}; 
\node [left] at (0.5,0) {$\tau_p|_{[\mathfrak{k},\mathfrak{k}]} :$};
\draw (1.1,0) -- (1.9,0);
\draw (2.1,0) -- (2.5,0); 
\draw [dotted] (2.5,0) -- (3,0);
\draw (2.80,0) -- (3.1,0);
\draw (3.40,0) -- (3.67,0);
\draw [dotted](3.67,0) -- (3.95,0);
\draw (4,0) -- (4.4,0); 
\draw (5.4,0) -- (5.3,0.1); 
\draw (5.4,0) -- (5.3,-0.1); 
\draw (4.6,0.025) -- (5.35,0.025); 
\draw (4.6,-0.025) -- (5.35,-0.025);
 \end{tikzpicture}  \\
 
$\tau_p|_\mathfrak{z} = - id \textrm{ for all } 1 \leq p \leq l,$ \\

 $\tau_1|_{[\mathfrak{k},\mathfrak{k}]} = id, l \ge 1.$ \\ 
 
 \end{tabular} & 
 
\begin{tabular}{c}  
\begin{tikzpicture}
\filldraw [black] (0,-.5) circle [radius = 0.1]; 
\filldraw[black] (0,.5) circle [radius = 0.1];
\draw (1,-0.08) -- (0,-.5);
\draw (1,0.08) -- (0,.5);
\node [above] at (-.34,0.3) {$\phi_1$};
\node [left] at (0,-.5) {$-\delta $};
\draw  (-0,-0.6) --(0,0.6) ;
\draw  (-.1,-0.3) --(0,-0.45) ;
\draw  (.1,-0.3) --(0,-0.45) ;
\draw  (-.1,0.3) --(0,0.45) ;
\draw  (.1,0.3) --(0,0.45) ;

\draw (1,0) circle [radius = 0.1]; 
\draw (2,0) circle [radius = 0.1]; 
\draw  (3.25,0) circle [radius = 0.1];
\draw  (3.25,0) circle [radius = 0.15];
\draw (4.5,0) circle [radius = 0.1]; 
\draw (5.5,0) circle [radius = 0.1]; 
\node [above] at (1.05,0.05) {$\phi_2$}; 
\node [above] at (2.05,0.05) {$\phi_3$}; 
\node [above] at (3.30,0.05) {$\phi_p$};
\node [above] at (4.75,0.05) {$\phi_{l-1}$}; 
\node [above] at (5.55,0.05) {$\phi_l$}; 
\node [left] at (-0.5,0) {$\tau_p, \tau_p \theta :$};

\draw (1.1,0) -- (1.9,0);
\draw (2.1,0) -- (2.5,0); 
\draw [dotted] (2.5,0) -- (3,0);
\draw (2.80,0) -- (3.1,0);
\draw (3.40,0) -- (3.67,0);
\draw [dotted](3.67,0) -- (3.95,0);
\draw (4,0) -- (4.4,0); 
\draw (5.4,0) -- (5.3,0.1); 
\draw (5.4,0) -- (5.3,-0.1); 
\draw (4.6,0.025) -- (5.35,0.025); 
\draw (4.6,-0.025) -- (5.35,-0.025);
\end{tikzpicture}
 
 \\
  
\begin{tikzpicture}
\filldraw [black] (0,-.5) circle [radius = 0.1]; 
\filldraw[black] (0,.5) circle [radius = 0.1];
\draw (1,-0.08) -- (0,-.5);
\draw (1,0.08) -- (0,.5);
\node [above] at (-.34,0.3) {$\phi_1$};
\node [left] at (0,-.5) {$-\delta $};
\draw  (-0,-0.6) --(0,0.6) ;
\draw  (-.1,-0.3) --(0,-0.45) ;
\draw  (.1,-0.3) --(0,-0.45) ;
\draw  (-.1,0.3) --(0,0.45) ;
\draw  (.1,0.3) --(0,0.45) ;

\draw (1,0) circle [radius = 0.1]; 
\draw (2,0) circle [radius = 0.1]; 
\draw (3.5,0) circle [radius = 0.1]; 
\draw (4.5,0) circle [radius = 0.1]; 
\node [above] at (1.05,0.05) {$\phi_2$}; 
\node [above] at (2.05,0.05) {$\phi_3$}; 
\node [above] at (3.75,0.05) {$\phi_{l-1}$}; 
\node [above] at (4.55,0.05) {$\phi_l$}; 
\node [left] at (-0.5,0) {$\tau_1,\tau_1\theta :$};
 
\draw (1.1,0) -- (1.9,0);
\draw (2.1,0) -- (2.5,0); 
\draw [dotted] (2.5,0) -- (3,0); 
\draw (3,0) -- (3.4,0); 
\draw (4.4,0) -- (4.3,0.1); 
\draw (4.4,0) -- (4.3,-0.1); 
\draw (3.6,0.025) -- (4.35,0.025); 
\draw (3.6,-0.025) -- (4.35,-0.025); 
\end{tikzpicture} 

\end{tabular} \\  
\hline
\end{tabular}

\end{table}

\begin{table}[!h] 
\caption{Involutions of $\frak{so}(2, 2l-2)$ which are commuting with $\theta$ but not $\theta$} \label{inv2}
\begin{tabular}{||c|c||}
\hline

inv$(\mathfrak{k}_0)$  &  inv$(\mathfrak{g}_0)$   \\

\hline
\hline

\begin{tabular}{c}  

\begin{tikzpicture}
\draw (1,0) circle [radius = 0.1]; 
\draw (2,0) circle [radius = 0.1]; 
\draw  (3.25,0) circle [radius = 0.1];
\draw  (3.25,0) circle [radius = 0.15];
\node [above] at (1.05,0.05) {$\phi_2$}; 
\node [above] at (2.05,0.05) {$\phi_3$}; 
\node [above] at (3.30,0.05) {$\phi_p$};
\draw (1.1,0) -- (1.9,0);
\draw (2.1,0) -- (2.5,0); 
\draw [dotted] (2.5,0) -- (3,0);
\draw (2.80,0) -- (3.1,0);
\draw (3.40,0) -- (3.67,0);
\draw [dotted](3.67,0) -- (3.95,0);
\draw (4,0) -- (4.4,0); 
\draw (4.5,0) circle [radius = 0.1]; 
\draw (5.5,0.5) circle [radius = 0.1]; 
\draw (5.5,-0.5) circle [radius = 0.1];
\node [above] at (4.5,0.05) {$\phi_{l-2}$}; 
\node [right] at (5.5,0.5) {$\phi_{l-1}$};
\node [right] at (5.5,-0.5) {$\phi_{l}$};
\node [left] at (.8,0) {$\sigma_p|_{[\mathfrak{k},\mathfrak{k}]} :$};
\draw (4.62,0) -- (5.4,0.5); 
\draw (4.62,0) -- (5.4,-0.5); 
\end{tikzpicture} 

\\

$\sigma_p|_\mathfrak{z}= id \textrm{ for all } 2 \leq p \leq l-2, l \ge 4.$ \\

\end{tabular} & 

\begin{tabular}{c}  
\begin{tikzpicture}
\filldraw [black] (0,0.5) circle [radius = 0.1]; 
\filldraw[black] (0,-.5) circle [radius = 0.1];
\node [left] at (0,0.5) {$\phi_1$};
\node [left] at (0,-.5) {$-\delta$};
\draw (0,.5) -- (.9,0);
\draw (0,-.5) -- (.9,0);

\draw (1,0) circle [radius = 0.1]; 
\draw (2,0) circle [radius = 0.1]; 
\draw  (3.25,0) circle [radius = 0.1];
\draw  (3.25,0) circle [radius = 0.15];

\node [above] at (1.05,0.05) {$\phi_2$}; 
\node [above] at (2.05,0.05) {$\phi_3$}; 
\node [above] at (3.30,0.05) {$\phi_p$};

\draw (1.1,0) -- (1.9,0);
\draw (2.1,0) -- (2.5,0); 
\draw [dotted] (2.5,0) -- (3,0);
\draw (2.80,0) -- (3.1,0);
\draw (3.40,0) -- (3.67,0);
\draw [dotted](3.67,0) -- (3.95,0);
\draw (4,0) -- (4.4,0); 
\draw (4.5,0) circle [radius = 0.1]; 
\draw (5.5,0.5) circle [radius = 0.1]; 
\draw (5.5,-0.5) circle [radius = 0.1];
 \node [above] at (4.5,0.05) {$\phi_{l-2}$}; 
\node [right] at (5.5,0.5) {$\phi_{l-1}$};
\node [right] at (5.5,-0.5) {$\phi_{l}$};
\node [left] at (-0.5,0) {$\sigma_p :$};
\draw (4.62,0) -- (5.4,0.5); 
\draw (4.62,0) -- (5.4,-0.5); 
\end{tikzpicture} 

\\

\begin{tikzpicture}
\filldraw [black] (0,.5) circle [radius = 0.1]; 
\filldraw [black] (0,-.5) circle [radius = 0.1];
\draw [black] (0,0.5) circle [radius = 0.15];
\draw [black] (0,-.5) circle [radius = 0.15];
\node [left] at (0,0.5) {$\phi_1$}; 
\node [left] at (-.02,-.5) {$-\delta$};
\draw (0,.5) -- (.9,0);
\draw (0,-.5) -- (.9,0);

\draw (1,0) circle [radius = 0.1]; 
\draw (2,0) circle [radius = 0.1]; 
\draw  (3.25,0) circle [radius = 0.1];
\draw  (3.25,0) circle [radius = 0.15];

\node [above] at (1.05,0.05) {$\phi_2$}; 
\node [above] at (2.05,0.05) {$\phi_3$}; 
\node [above] at (3.30,0.05) {$\phi_p$};

\draw (1.1,0) -- (1.9,0);
\draw (2.1,0) -- (2.5,0); 
\draw [dotted] (2.5,0) -- (3,0);
\draw (2.80,0) -- (3.1,0);
\draw (3.40,0) -- (3.67,0);
\draw [dotted](3.67,0) -- (3.95,0);
\draw (4,0) -- (4.4,0); 
\draw (4.5,0) circle [radius = 0.1]; 
\draw (5.5,0.5) circle [radius = 0.1]; 
\draw (5.5,-0.5) circle [radius = 0.1];
\node [above] at (4.5,0.05) {$\phi_{l-2}$}; 
\node [right] at (5.5,0.5) {$\phi_{l-1}$};
\node [right] at (5.5,-0.5) {$\phi_{l}$};
\node [left] at (-0.5,0) {$\sigma_p \theta:$};
\draw (4.62,0) -- (5.4,0.5); 
\draw (4.62,0) -- (5.4,-0.5); 
\end{tikzpicture} 

\end{tabular} \\  

\hline

\begin{tabular}{c} 

\begin{tikzpicture}
\draw (1,0) circle [radius = 0.1]; 
\draw (2,0) circle [radius = 0.1]; 
\draw  (3.25,0) circle [radius = 0.1];
\draw  (3.25,0) circle [radius = 0.15];
\draw (4.5,0) circle [radius = 0.1]; 

\node [above] at (1.05,0.05) {$\phi_2$}; 
\node [above] at (2.05,0.05) {$\phi_3$}; 
\node [above] at (3.30,0.05) {$\phi_p$};
   
\draw (1.1,0) -- (1.9,0);
\draw (2.1,0) -- (2.5,0); 
\draw [dotted] (2.5,0) -- (3,0);
\draw (2.80,0) -- (3.1,0);
\draw (3.40,0) -- (3.67,0);
\draw [dotted](3.67,0) -- (3.95,0);
\draw (4,0) -- (4.4,0); 
\draw (4.5,0) circle [radius = 0.1]; 
\draw (5.5,0.5) circle [radius = 0.1]; 
\draw (5.5,-0.5) circle [radius = 0.1];
\node [above] at (4.5,0.05) {$\phi_{l-2}$}; 
\node [right] at (5.5,0.5) {$\phi_{l-1}$};
\node [right] at (5.5,-0.5) {$\phi_{l}$};
\node [left] at (0.8,0) {$\tau_p|_{[\mathfrak{k},\mathfrak{k}]} :$};

\draw (4.62,0) -- (5.4,0.5); 
\draw (4.62,0) -- (5.4,-0.5); 
\end{tikzpicture}

\\

$\tau_p|_\mathfrak{z} = - id \textrm{ for all } 1 \leq p \leq l-2,$ \\

$\tau_1|_{[\mathfrak{k},\mathfrak{k}]} = id, l \ge 3.$

\end{tabular} & 

\begin{tabular}{c}  

\begin{tikzpicture}
\filldraw [black] (0,.5) circle [radius = 0.1];
\filldraw [black] (0,-.5) circle [radius = 0.1];
\node [left] at (0,0.5) {$\phi_1$};
\node [left] at (0,-.5) {$-\delta$};
\draw (0,-0.5) -- (0,.5);
\draw (0,-0.5) -- (0.9,0);
\draw (0,0.5) -- (0.9,0);
\draw (-.1,0.3) -- (0,.5);
\draw (.1,0.3) -- (0,.5);
\draw (-.1,-0.3) -- (0,-.5);
\draw (.1,0-.3) -- (0,-.5);

\draw (1,0) circle [radius = 0.1]; 
\draw (2,0) circle [radius = 0.1]; 
\draw  (3.25,0) circle [radius = 0.1];
\draw  (3.25,0) circle [radius = 0.15];
\draw (4.5,0) circle [radius = 0.1]; 

\node [above] at (1.05,0.05) {$\phi_2$}; 
\node [above] at (2.05,0.05) {$\phi_3$}; 
\node [above] at (3.30,0.05) {$\phi_p$};
   
\draw (1.1,0) -- (1.9,0);
\draw (2.1,0) -- (2.5,0); 
\draw [dotted] (2.5,0) -- (3,0);
\draw (2.80,0) -- (3.1,0);
\draw (3.40,0) -- (3.67,0);
\draw [dotted](3.67,0) -- (3.95,0);
\draw (4,0) -- (4.4,0); 

\draw (4.5,0) circle [radius = 0.1]; 
\draw (5.5,0.5) circle [radius = 0.1]; 
\draw (5.5,-0.5) circle [radius = 0.1];
\node [above] at (4.5,0.05) {$\phi_{l-2}$}; 
\node [right] at (5.5,0.5) {$\phi_{l-1}$};
\node [right] at (5.5,-0.5) {$\phi_{l}$};
\node [left] at (-0.5,0) {$\tau_p,\tau_p\theta :$};

\draw (4.62,0) -- (5.4,0.5); 
\draw (4.62,0) -- (5.4,-0.5); 
\end{tikzpicture} 

\\

\begin{tikzpicture}
\filldraw [black] (0,.5) circle [radius = 0.1];
\filldraw [black] (0,-.5) circle [radius = 0.1];
\node [left] at (0,0.5) {$\phi_1$};
\node [left] at (0,-.5) {$-\delta$};
\draw (0,-0.5) -- (0,.5);
\draw (0,-0.5) -- (0.9,0);
\draw (0,0.5) -- (0.9,0);
\draw (-.1,0.3) -- (0,.5);
\draw (.1,0.3) -- (0,.5);
\draw (-.1,-0.3) -- (0,-.5);
\draw (.1,0-.3) -- (0,-.5);
       
\draw (1,0) circle [radius = 0.1]; 
\draw (2,0) circle [radius = 0.1]; 

\node [above] at (1.05,0.05) {$\phi_2$}; 
\node [above] at (2.05,0.05) {$\phi_3$}; 
\node [above] at (3.4,0.05) {$\phi_{l-2}$};
 
\draw (1.1,0) -- (1.9,0);
\draw (2.1,0) -- (2.5,0); 
\draw [dotted] (2.5,0) -- (3,0); 
\draw (3,0) -- (3.4,0); 

\draw (3.5,0) circle [radius = 0.1];
\draw (4.5,0.5) circle [radius = 0.1];
\draw (4.5,-0.5) circle [radius = 0.1];

\node [right] at (4.5,0.5) {$\phi_{l-1}$}; 
\node [right] at (4.5,-0.5) {$\phi_l $}; 
\node [left] at (-0.8,0.2) {$\tau_1,\tau_1\theta :$};

\draw (3.6,0.025) -- (4.4,0.5); 
\draw (3.6,-0.025) -- (4.4,-0.5); 
\end{tikzpicture}

\end{tabular} \\  

\hline

\begin{tabular}{c}   

 \begin{tikzpicture}
\draw (1,0) circle [radius = 0.1]; 
\draw (2,0) circle [radius = 0.1]; 
\draw  (3.25,0) circle [radius = 0.1];
\draw  (3.25,0) circle [radius = 0.15];
\draw (4.5,0) circle [radius = 0.1]; 

\node [above] at (1.05,0.05) {$\phi_2$}; 
\node [above] at (2.05,0.05) {$\phi_3$}; 
\node [above] at (3.30,0.05) {$\phi_p$};
   
\draw (1.1,0) -- (1.9,0);
\draw (2.1,0) -- (2.5,0); 
\draw [dotted] (2.5,0) -- (3,0);
\draw (2.80,0) -- (3.1,0);
\draw (3.40,0) -- (3.67,0);
\draw [dotted](3.67,0) -- (3.95,0);
\draw (4,0) -- (4.4,0); 

\draw (4.5,0) circle [radius = 0.1]; 
\draw (5.5,0.5) circle [radius = 0.1]; 
\draw (5.5,-0.5) circle [radius = 0.1];
 \node [above] at (4.5,0.05) {$\phi_{l-2}$}; 
\node [right] at (5.5,0.5) {$\phi_{l-1}$};
\node [right] at (5.5,-0.5) {$\phi_{l}$};
\node [left] at (0.9,0) {$\mu_p|_{[\mathfrak{k},\mathfrak{k}]} :$};

\draw (4.62,0) -- (5.4,0.5); 
\draw (4.62,0) -- (5.4,-0.5); 
\draw (5.5,0.4) -- (5.5,-0.4);
\draw (5.4,0.3) -- (5.5,0.4);
\draw (5.6,0.3) -- (5.5,0.4);
\draw (5.4,-0.3) -- (5.5,-0.4);
\draw (5.6,-0.3) -- (5.5,-0.4);
\end{tikzpicture}

 \\

$\mu_p|_\mathfrak{z}= -id, \textrm{ for all } 1\leq p \leq l-2, l \ge 3,$ \\

\begin{tikzpicture}
\draw (1,0) circle [radius = 0.1]; 
\draw (2,0) circle [radius = 0.1]; 
\draw  (3.25,0) circle [radius = 0.1];
\draw (4.5,0) circle [radius = 0.1]; 

\node [above] at (1.05,0.05) {$\phi_2$}; 
\node [above] at (2.05,0.05) {$\phi_3$}; 
\node [above] at (3.30,0.05) {$\phi_p$};
   
\draw (1.1,0) -- (1.9,0);
\draw (2.1,0) -- (2.5,0); 
\draw [dotted] (2.5,0) -- (3,0);
\draw (2.80,0) -- (3.1,0);
\draw (3.40,0) -- (3.67,0);
\draw [dotted](3.67,0) -- (3.95,0);
\draw (4,0) -- (4.4,0); 

\draw (4.5,0) circle [radius = 0.1]; 
\draw (5.5,0.5) circle [radius = 0.1]; 
\draw (5.5,-0.5) circle [radius = 0.1];
 \node [above] at (4.5,0.05) {$\phi_{l-2}$}; 
\node [right] at (5.5,0.5) {$\phi_{l-1}$};
\node [right] at (5.5,-0.5) {$\phi_{l}$};
\node [left] at (0.9,0) {$\mu_1|_{[\mathfrak{k},\mathfrak{k}]} :$};

\draw (4.62,0) -- (5.4,0.5); 
\draw (4.62,0) -- (5.4,-0.5); 
\draw (5.5,0.4) -- (5.5,-0.4);
\draw (5.4,0.3) -- (5.5,0.4);
\draw (5.6,0.3) -- (5.5,0.4);
\draw (5.4,-0.3) -- (5.5,-0.4);
\draw (5.6,-0.3) -- (5.5,-0.4);
\end{tikzpicture}

\end{tabular} & 

\begin{tabular}{c}  
     
\begin{tikzpicture}
\filldraw [black] (0,0.5) circle [radius = 0.1]; 
\filldraw[black] (0,-.5) circle [radius = 0.1];
\node [left] at (0,0.5) {$\phi_1$};
\node [left] at (0,-.5) {$-\delta$};
\draw (0,0.5) -- (0.9,0);
\draw (0,-0.5) -- (0.9,0);
\draw (0,-0.5) --(0,0.5) ;
\draw (0,-0.5) --(0,0.5) ;
\draw (-.12,-0.3) --(0,-0.5) ;
\draw (.12,-0.3) --(0,-0.5) ;
\draw (-.12,0.3) --(0,0.5) ;
\draw (.12,0.3) --(0,0.5) ;

\draw (1,0) circle [radius = 0.1]; 
\draw (2,0) circle [radius = 0.1]; 
\draw  (3.25,0) circle [radius = 0.1];
\draw  (3.25,0) circle [radius = 0.15];
\draw (4.5,0) circle [radius = 0.1]; 

\node [above] at (1.05,0.05) {$\phi_2$}; 
\node [above] at (2.05,0.05) {$\phi_3$}; 
\node [above] at (3.30,0.05) {$\phi_p$};
   
\draw (1.1,0) -- (1.9,0);
\draw (2.1,0) -- (2.5,0); 
\draw [dotted] (2.5,0) -- (3,0);
\draw (2.80,0) -- (3.1,0);
\draw (3.40,0) -- (3.67,0);
\draw [dotted](3.67,0) -- (3.95,0);
\draw (4,0) -- (4.4,0); 

\draw (4.5,0) circle [radius = 0.1]; 
\draw (5.5,0.5) circle [radius = 0.1]; 
\draw (5.5,-0.5) circle [radius = 0.1];
 
\node [above] at (4.5,0.05) {$\phi_{l-2}$}; 
\node [right] at (5.5,0.5) {$\phi_{l-1}$};
\node [right] at (5.5,-0.5) {$\phi_{l}$};
\node [left] at (-0.5,0) {$\mu_p,\mu_p\theta :$};

\draw (4.62,0) -- (5.4,0.5); 
\draw (4.62,0) -- (5.4,-0.5); 
\draw (5.5,0.4) -- (5.5,-0.4);
\draw (5.4,0.3) -- (5.5,0.4);
\draw (5.6,0.3) -- (5.5,0.4);
\draw (5.4,-0.3) -- (5.5,-0.4);
\draw (5.6,-0.3) -- (5.5,-0.4);
\end{tikzpicture} 

 \\

\begin{tikzpicture}
\filldraw [black] (0,.5) circle [radius = 0.1];
\filldraw [black] (0,-.5) circle [radius = 0.1];
\node [left] at (0,0.5) {$\phi_1$};
\node [left] at (0,-.5) {$-\delta$};
\draw (1,0) circle [radius = 0.1]; 
\draw (2,0) circle [radius = 0.1]; 
\draw (0,-0.5) -- (0,.5);
\draw (0,-0.5) -- (0.9,0);
\draw (0,0.5) -- (0.9,0);
\draw (-.1,0.3) -- (0,.5);
\draw (.1,0.3) -- (0,.5);
\draw (-.1,-0.3) -- (0,-.5);
\draw (.1,0-.3) -- (0,-.5);

\draw (1,0) circle [radius = 0.1]; 
\draw (2,0) circle [radius = 0.1]; 

\node [above] at (1.05,0.05) {$\phi_2$}; 
\node [above] at (2.05,0.05) {$\phi_3$}; 
\node [above] at (3.4,0.05) {$\phi_{l-2}$}; 
 
\draw (1.1,0) -- (1.9,0);
\draw (2.1,0) -- (2.5,0); 
\draw [dotted] (2.5,0) -- (3,0); 
\draw (3,0) -- (3.4,0); 
\draw (3.5,0) circle [radius = 0.1];
\draw (4.5,0.5) circle [radius = 0.1];
\draw (4.5,-0.5) circle [radius = 0.1];
\node [right] at (4.5,0.5) {$\phi_{l-1}$}; 
\node [right] at (4.5,-0.5) {$\phi_l $}; 
\node [left] at (-0.8,0.2) {$\mu_1,\mu_1\theta :$};

\draw (3.6,0.025) -- (4.4,0.5); 
\draw (3.6,-0.025) -- (4.4,-0.5);
\draw (4.5,0.4) -- (4.5,-0.4);
\draw (4.4,0.2) -- (4.5,0.4);
\draw (4.6,0.2) -- (4.5,0.4);
\draw (4.4,-0.2) -- (4.5,-0.4);
\draw (4.6,-0.2) -- (4.5,-0.4);
\end{tikzpicture}
 
\end{tabular} \\ 
\hline 

\end{tabular}
\end{table}

\begin{table}[!h]
\begin{tabular}{||c|c||}
\hline

\begin{tabular}{c} 

\begin{tikzpicture}
\draw (1,0) circle [radius = 0.1]; 
\draw (2,0) circle [radius = 0.1]; 
\draw  (3.25,0) circle [radius = 0.1];
\draw  (3.25,0) circle [radius = 0.15];
\draw (4.5,0) circle [radius = 0.1]; 

\node [above] at (1.05,0.05) {$\phi_2$}; 
\node [above] at (2.05,0.05) {$\phi_3$}; 
\node [above] at (3.30,0.05) {$\phi_p$};
   
\draw (1.1,0) -- (1.9,0);
\draw (2.1,0) -- (2.5,0); 
\draw [dotted] (2.5,0) -- (3,0);
\draw (2.80,0) -- (3.1,0);
\draw (3.40,0) -- (3.67,0);
\draw [dotted](3.67,0) -- (3.95,0);
\draw (4,0) -- (4.4,0); 

\draw (4.5,0) circle [radius = 0.1]; 
\draw (5.5,0.5) circle [radius = 0.1]; 
\draw (5.5,-0.5) circle [radius = 0.1];
 \node [above] at (4.5,0.05) {$\phi_{l-2}$}; 
\node [right] at (5.5,0.5) {$\phi_{l-1}$};
\node [right] at (5.5,-0.5) {$\phi_{l}$};
\node [left] at (0.9,0) {$\tau_p'|_{[\mathfrak{k},\mathfrak{k}]} :$};

\draw (4.62,0) -- (5.4,0.5); 
\draw (4.62,0) -- (5.4,-0.5); 
\draw (5.5,0.4) -- (5.5,-0.4);
\draw (5.4,0.3) -- (5.5,0.4);
\draw (5.6,0.3) -- (5.5,0.4);
\draw (5.4,-0.3) -- (5.5,-0.4);
\draw (5.6,-0.3) -- (5.5,-0.4);
\end{tikzpicture}   \\
$\tau_p'|_\mathfrak{z}= id, \textrm{ for all } 1\leq p \leq l-2, l \ge 3,$ \\

\begin{tikzpicture}
\draw (1,0) circle [radius = 0.1]; 
\draw (2,0) circle [radius = 0.1]; 
\draw  (3.25,0) circle [radius = 0.1];
\draw (4.5,0) circle [radius = 0.1]; 

\node [above] at (1.05,0.05) {$\phi_2$}; 
\node [above] at (2.05,0.05) {$\phi_3$}; 
\node [above] at (3.30,0.05) {$\phi_p$};
   
\draw (1.1,0) -- (1.9,0);
\draw (2.1,0) -- (2.5,0); 
\draw [dotted] (2.5,0) -- (3,0);
\draw (2.80,0) -- (3.1,0);
\draw (3.40,0) -- (3.67,0);
\draw [dotted](3.67,0) -- (3.95,0);
\draw (4,0) -- (4.4,0); 

\draw (4.5,0) circle [radius = 0.1]; 
\draw (5.5,0.5) circle [radius = 0.1]; 
\draw (5.5,-0.5) circle [radius = 0.1];
 
\node [above] at (4.5,0.05) {$\phi_{l-2}$}; 
\node [right] at (5.5,0.5) {$\phi_{l-1}$};
\node [right] at (5.5,-0.5) {$\phi_{l}$};
\node [left] at (0.9,0) {$\tau_1'|_{[\mathfrak{k},\mathfrak{k}]} :$};

\draw (4.62,0) -- (5.4,0.5); 
\draw (4.62,0) -- (5.4,-0.5); 
\draw (5.5,0.4) -- (5.5,-0.4);
\draw (5.4,0.3) -- (5.5,0.4);
\draw (5.6,0.3) -- (5.5,0.4);
\draw (5.4,-0.3) -- (5.5,-0.4);
\draw (5.6,-0.3) -- (5.5,-0.4);
\end{tikzpicture} \\
 
 \end{tabular} & 

\begin{tabular}{c}  

\begin{tikzpicture}
\filldraw [black] (0,0.5) circle [radius = 0.1]; 
\filldraw[black] (0,-.5) circle [radius = 0.1];
\node [left] at (0,0.5) {$\phi_1$};
\node [left] at (0,-.5) {$-\delta$};
\draw (0,.5) -- (.9,0);
\draw (0,-.5) -- (.9,0);

\draw (1,0) circle [radius = 0.1]; 
\draw (2,0) circle [radius = 0.1]; 
\draw  (3.25,0) circle [radius = 0.1];
\draw  (3.25,0) circle [radius = 0.15];
\draw (4.5,0) circle [radius = 0.1]; 

\node [above] at (1.05,0.05) {$\phi_2$}; 
\node [above] at (2.05,0.05) {$\phi_3$}; 
\node [above] at (3.30,0.05) {$\phi_p$};
   
\draw (1.1,0) -- (1.9,0);
\draw (2.1,0) -- (2.5,0); 
\draw [dotted] (2.5,0) -- (3,0);
\draw (2.80,0) -- (3.1,0);
\draw (3.40,0) -- (3.67,0);
\draw [dotted](3.67,0) -- (3.95,0);
\draw (4,0) -- (4.4,0); 

\draw (4.5,0) circle [radius = 0.1]; 
\draw (5.5,0.5) circle [radius = 0.1]; 
\draw (5.5,-0.5) circle [radius = 0.1];
 \node [above] at (4.5,0.05) {$\phi_{l-2}$}; 
\node [right] at (5.5,0.5) {$\phi_{l-1}$};
\node [right] at (5.5,-0.5) {$\phi_{l}$};
\node [left] at (-0.5,0) {$\tau_p ' :$};

\draw (4.62,0) -- (5.4,0.5); 
\draw (4.62,0) -- (5.4,-0.5); 
\draw (5.5,0.4) -- (5.5,-0.4);
\draw (5.4,0.3) -- (5.5,0.4);
\draw (5.6,0.3) -- (5.5,0.4);
\draw (5.4,-0.3) -- (5.5,-0.4);
\draw (5.6,-0.3) -- (5.5,-0.4);
\end{tikzpicture}   \\

\begin{tikzpicture}
\filldraw [black] (0,.5) circle [radius = 0.1]; 
\filldraw [black] (0,-.5) circle [radius = 0.1];
\draw [black] (0,0.5) circle [radius = 0.15];
\draw [black] (0,-.5) circle [radius = 0.15];

\node [left] at (0,0.5) {$\phi_1$}; 
\node [left] at (-.02,-.5) {$-\delta$};
\draw (0,.5) -- (.9,0);
\draw (0,-.5) -- (.9,0);

\draw (1,0) circle [radius = 0.1]; 
\draw (2,0) circle [radius = 0.1]; 
\draw  (3.25,0) circle [radius = 0.1];
\draw  (3.25,0) circle [radius = 0.15];
\draw (4.5,0) circle [radius = 0.1]; 

\node [above] at (1.05,0.05) {$\phi_2$}; 
\node [above] at (2.05,0.05) {$\phi_3$}; 
\node [above] at (3.30,0.05) {$\phi_p$};
   
\draw (1.1,0) -- (1.9,0);
\draw (2.1,0) -- (2.5,0); 
\draw [dotted] (2.5,0) -- (3,0);
\draw (2.80,0) -- (3.1,0);
\draw (3.40,0) -- (3.67,0);
\draw [dotted](3.67,0) -- (3.95,0);
\draw (4,0) -- (4.4,0); 

\draw (4.5,0) circle [radius = 0.1]; 
\draw (5.5,0.5) circle [radius = 0.1]; 
\draw (5.5,-0.5) circle [radius = 0.1];
 
\node [above] at (4.5,0.05) {$\phi_{l-2}$}; 
\node [right] at (5.5,0.5) {$\phi_{l-1}$};
\node [right] at (5.5,-0.5) {$\phi_{l}$};
\node [left] at (-0.5,0) {$\tau_p '\theta :$};

\draw (4.62,0) -- (5.4,0.5); 
\draw (4.62,0) -- (5.4,-0.5); 
\draw (5.5,0.4) -- (5.5,-0.4);
\draw (5.4,0.3) -- (5.5,0.4);
\draw (5.6,0.3) -- (5.5,0.4);
\draw (5.4,-0.3) -- (5.5,-0.4);
\draw (5.6,-0.3) -- (5.5,-0.4);
\end{tikzpicture}    \\
 
\begin{tikzpicture}
\filldraw [black] (0,.5) circle [radius = 0.1];
\filldraw [black] (0,-.5) circle [radius = 0.1];
\node [left] at (0,0.5) {$\phi_1$}; 
\node [left] at (0,-.5) {$-\delta$};
\draw (0,-0.5) -- (.9,0);
\draw (0,0.5) -- (.9,0);
       
\draw (1,0) circle [radius = 0.1]; 
\draw (2,0) circle [radius = 0.1]; 

\node [above] at (1.05,0.05) {$\phi_2$}; 
\node [above] at (2.05,0.05) {$\phi_3$}; 
\node [above] at (3.4,0.05) {$\phi_{l-2}$}; 
 
\draw (1.1,0) -- (1.9,0);
\draw (2.1,0) -- (2.5,0); 
\draw [dotted] (2.5,0) -- (3,0); 
\draw (3,0) -- (3.4,0); 
\draw (3.5,0) circle [radius = 0.1];
\draw (4.5,0.5) circle [radius = 0.1];
\draw (4.5,-0.5) circle [radius = 0.1];

\node [right] at (4.5,0.5) {$\phi_{l-1}$}; 
\node [right] at (4.5,-0.5) {$\phi_l $}; 
\node [left] at (-0.5,0) {$\tau_1': $}; 

\draw (3.6,0.025) -- (4.4,0.5); 
\draw (3.6,-0.025) -- (4.4,-0.5);
\draw (4.5,0.4) -- (4.5,-0.4);
\draw (4.4,0.2) -- (4.5,0.4);
\draw (4.6,0.2) -- (4.5,0.4);
\draw (4.4,-0.2) -- (4.5,-0.4);
\draw (4.6,-0.2) -- (4.5,-0.4);
\end{tikzpicture}   \\

\begin{tikzpicture}
\filldraw [black] (0,.5) circle [radius = 0.1];
\draw [black] (0,.5) circle [radius = 0.15];
\filldraw [black] (0,-.5) circle [radius = 0.1];
\draw [black] (0,-.5) circle [radius = 0.15];
\node [left] at (-.02,-.5) {$-\delta$};
\node [left] at (0,0.5) {$\phi_1$};
\draw (0,.5) -- (.9,-0);
\draw (0,-.5) -- (.9,-0);
       
\draw (1,0) circle [radius = 0.1]; 
\draw (2,0) circle [radius = 0.1]; 

\node [above] at (1.05,0.05) {$\phi_2$}; 
\node [above] at (2.05,0.05) {$\phi_3$}; 
\node [above] at (3.4,0.05) {$\phi_{l-2}$}; 
 
\draw (1.1,0) -- (1.9,0);
\draw (2.1,0) -- (2.5,0); 
\draw [dotted] (2.5,0) -- (3,0); 
\draw (3,0) -- (3.4,0); 
\draw (3.5,0) circle [radius = 0.1];
\draw (4.5,0.5) circle [radius = 0.1];
\draw (4.5,-0.5) circle [radius = 0.1];
\node [right] at (4.5,0.5) {$\phi_{l-1}$}; 
\node [right] at (4.5,-0.5) {$\phi_l $}; 
\node [left] at (-0.5,0) {$\tau_1'\theta: $}; 
\draw (3.6,0.025) -- (4.4,0.5); 
\draw (3.6,-0.025) -- (4.4,-0.5);
\draw (4.5,0.4) -- (4.5,-0.4);
\draw (4.4,0.2) -- (4.5,0.4);
\draw (4.6,0.2) -- (4.5,0.4);
\draw (4.4,-0.2) -- (4.5,-0.4);
\draw (4.6,-0.2) -- (4.5,-0.4);
\end{tikzpicture}
\end{tabular} \\  
\hline
\begin{tabular}{c}   

\begin{tikzpicture}
\draw (1,0) circle [radius = 0.1]; 
\draw (2,0) circle [radius = 0.1]; 
\draw (3.5,0) circle [radius = 0.1]; 
\draw (4.5,0.5) circle [radius = 0.1];
\draw (4.5,-0.5) circle [radius = 0.15];
\draw (4.5,-0.5) circle [radius = 0.1];

\node [above] at (1.05,0.05) {$\phi_2$}; 
\node [above] at (2.05,0.05) {$\phi_3$}; 
\node [above] at (3.4,0.05) {$\phi_{l-2}$}; 
\node [right] at (4.5,0.5) {$\phi_{l-1}$}; 
\node [right] at (4.5,-0.5) {$\phi_l $}; 
\node [left] at (0.7,0) {$\sigma_{l}|_{[\mathfrak{k},\mathfrak{k}]}: $}; 

\draw (1.1,0) -- (1.9,0);
\draw (2.1,0) -- (2.5,0); 
\draw [dotted] (2.5,0) -- (3,0); 
\draw (3,0) -- (3.4,0); 
\draw (3.6,0.025) -- (4.4,0.5); 
\draw (3.6,-0.025) -- (4.35,-0.5); 
\end{tikzpicture}

 \\

$\sigma_l|_\mathfrak{z} = id, l \ge 3.$   \\

\end{tabular} & 

\begin{tabular}{c}  

\begin{tikzpicture}
\filldraw [black] (0,0.5) circle [radius = 0.1]; 
\filldraw[black] (0,-.5) circle [radius = 0.1];
\node [left] at (0,0.5) {$\phi_1$};
\node [left] at (0,-.5) {$-\delta$};
\draw [black] (0,0.5) circle [radius = 0.15];
\draw (0,.5) -- (.9,0);
\draw (0,-.5) -- (.9,0);

\draw (1,0) circle [radius = 0.1]; 
\draw (2,0) circle [radius = 0.1]; 
\draw (3.5,0) circle [radius = 0.1]; 
\draw (4.5,0.5) circle [radius = 0.1];
\draw (4.5,-0.5) circle [radius = 0.15];
\draw (4.5,-0.5) circle [radius = 0.1];

\node [above] at (1.05,0.05) {$\phi_2$}; 
\node [above] at (2.05,0.05) {$\phi_3$}; 
\node [above] at (3.4,0.05) {$\phi_{l-2}$}; 
\node [right] at (4.5,0.5) {$\phi_{l-1}$}; 
\node [right] at (4.5,-0.5) {$\phi_l $}; 
\node [left] at (-0.5,0) {$\sigma_{l}: $}; 
\draw (1.1,0) -- (1.9,0);
\draw (2.1,0) -- (2.5,0); 
\draw [dotted] (2.5,0) -- (3,0); 
\draw (3,0) -- (3.4,0); 
\draw (3.6,0.025) -- (4.4,0.5); 
\draw (3.6,-0.025) -- (4.35,-0.5); 
\end{tikzpicture}

\\

\begin{tikzpicture}
\filldraw [black] (0,0.5) circle [radius = 0.1]; 
\filldraw[black] (0,-.5) circle [radius = 0.1];
\node [left] at (0,0.5) {$\phi_1$};
\node [left] at (-.02,-.5) {$-\delta$};
\draw [black] (0,-0.5) circle [radius = 0.15];
\draw (0,.5) -- (.9,0);
\draw (0,-.5) -- (.9,0);

\draw (1,0) circle [radius = 0.1]; 
\draw (2,0) circle [radius = 0.1]; 
\draw (3.5,0) circle [radius = 0.1]; 
\draw (4.5,0.5) circle [radius = 0.1];
\draw (4.5,-0.5) circle [radius = 0.15];
\draw (4.5,-0.5) circle [radius = 0.1];
\node [above] at (1.05,0.05) {$\phi_2$}; 
\node [above] at (2.05,0.05) {$\phi_3$}; 
\node [above] at (3.4,0.05) {$\phi_{l-2}$}; 
\node [right] at (4.5,0.5) {$\phi_{l-1}$}; 
\node [right] at (4.5,-0.5) {$\phi_l $}; 
\node [left] at (-0.5,0) {$\sigma_{l}\theta: $}; 

\draw (1.1,0) -- (1.9,0);
\draw (2.1,0) -- (2.5,0); 
\draw [dotted] (2.5,0) -- (3,0); 
\draw (3,0) -- (3.4,0); 
\draw (3.6,0.025) -- (4.4,0.5); 
\draw (3.6,-0.025) -- (4.35,-0.5); 
\end{tikzpicture}
 
\end{tabular} \\ 

\hline 

\begin{tabular}{c} 

\begin{tikzpicture}
\draw (1,0) circle [radius = 0.1];
\draw (2,0) circle [radius = 0.1]; 
\draw (3.5,0) circle [radius = 0.1]; 
\draw (4.5,0.5) circle [radius = 0.1];
\draw (4.5,0.5) circle [radius = 0.15];
\draw (4.5,-0.5) circle [radius = 0.1];

\node [above] at (1.05,0.05) {$\phi_2$}; 
\node [above] at (2.05,0.05) {$\phi_3$}; 
\node [above] at (3.4,0.05) {$\phi_{l-2}$}; 
\node [right] at (4.5,0.5) {$\phi_{l-1}$}; 
\node [right] at (4.5,-0.5) {$\phi_l $}; 
\node [left] at (0.7,0) {$\sigma_{l-1}|_{[\mathfrak{k},\mathfrak{k}]}: $}; 

\draw (1.1,0) -- (1.9,0);
\draw (2.1,0) -- (2.5,0); 
\draw [dotted] (2.5,0) -- (3,0); 
\draw (3,0) -- (3.4,0);
\draw (3.5,0) circle [radius = 0.1];
\draw (3.6,0.025) -- (4.35,0.45); 
\draw (3.6,-0.025) -- (4.4,-0.5); 
\end{tikzpicture} 

\\

$\sigma_{l-1}|_\mathfrak{z} = id, l \ge 3.$\\

\end{tabular} & 

\begin{tabular}{c}  

\begin{tikzpicture}
\filldraw [black] (0,0.5) circle [radius = 0.1]; 
\filldraw[black] (0,-.5) circle [radius = 0.1];
\node [left] at (0,0.5) {$\phi_1$};
\node [left] at (0,-.5) {$-\delta$};
\draw [black] (0,0.5) circle [radius = 0.15];
\draw (0,.5) -- (.9,0);
\draw (0,-.5) -- (.9,0);
   
\draw (1,0) circle [radius = 0.1];
\draw (2,0) circle [radius = 0.1]; 
\draw (3.5,0) circle [radius = 0.1]; 
\draw (4.5,0.5) circle [radius = 0.1];
\draw (4.5,0.5) circle [radius = 0.15];
\draw (4.5,-0.5) circle [radius = 0.1];

\node [above] at (1.05,0.05) {$\phi_2$}; 
\node [above] at (2.05,0.05) {$\phi_3$}; 
\node [above] at (3.4,0.05) {$\phi_{l-2}$}; 
\node [right] at (4.5,0.5) {$\phi_{l-1}$}; 
\node [right] at (4.5,-0.5) {$\phi_l $}; 
\node [left] at (-0.5,0) {$\sigma_{l-1}: $}; 

\draw (1.1,0) -- (1.9,0);
\draw (2.1,0) -- (2.5,0); 
\draw [dotted] (2.5,0) -- (3,0); 
\draw (3,0) -- (3.4,0);

\draw (3.5,0) circle [radius = 0.1];
\draw (3.6,0.025) -- (4.35,0.45); 
\draw (3.6,-0.025) -- (4.4,-0.5); 
\end{tikzpicture} 

\\

\begin{tikzpicture}
\filldraw [black] (0,0.5) circle [radius = 0.1]; 
\filldraw[black] (0,-.5) circle [radius = 0.1];
\node [left] at (0,0.5) {$\phi_1$};
\node [left] at (-.02,-.5) {$-\delta$};
\draw [black] (0,-0.5) circle [radius = 0.15];
\draw (0,.5) -- (.9,0);
\draw (0,-.5) -- (.9,0);
   
\draw (1,0) circle [radius = 0.1]; 
\draw (2,0) circle [radius = 0.1]; 
\draw (3.5,0) circle [radius = 0.1]; 
\draw (4.5,0.5) circle [radius = 0.1];
\draw (4.5,0.5) circle [radius = 0.15];
\draw (4.5,-0.5) circle [radius = 0.1];

\node [above] at (1.05,0.05) {$\phi_2$}; 
\node [above] at (2.05,0.05) {$\phi_3$}; 
\node [above] at (3.4,0.05) {$\phi_{l-2}$}; 
\node [right] at (4.5,0.5) {$\phi_{l-1}$}; 
\node [right] at (4.5,-0.5) {$\phi_l $}; 
\node [left] at (-0.5,0) {$\sigma_{l-1}\theta: $}; 

\draw (1.1,0) -- (1.9,0);
\draw (2.1,0) -- (2.5,0); 
\draw [dotted] (2.5,0) -- (3,0); 
\draw (3,0) -- (3.4,0);

\draw (3.5,0) circle [radius = 0.1];
\draw (3.6,0.025) -- (4.35,0.45); 
\draw (3.6,-0.025) -- (4.4,-0.5); 
\end{tikzpicture} 

\end{tabular} \\  

\hline

\begin{tabular}{c}

\begin{tikzpicture}
\draw (1,0) circle [radius = 0.1]; 
\draw (2,0) circle [radius = 0.1]; 
\draw (3.5,0) circle [radius = 0.1]; 
\draw (4.5,0.5) circle [radius = 0.1];
\draw (4.5,-0.5) circle [radius = 0.15];
\draw (4.5,-0.5) circle [radius = 0.1];

\node [above] at (1.05,0.05) {$\phi_2$}; 
\node [above] at (2.05,0.05) {$\phi_3$}; 
\node [above] at (3.4,0.05) {$\phi_{l-2}$}; 
\node [right] at (4.5,0.5) {$\phi_{l-1}$}; 
\node [right] at (4.5,-0.5) {$\phi_l $}; 
\node [left] at (.6,0) {$\tau_{l}|_{[\mathfrak{k},\mathfrak{k}]}: $}; 

\draw (1.1,0) -- (1.9,0);
\draw (2.1,0) -- (2.5,0); 
\draw [dotted] (2.5,0) -- (3,0); 
\draw (3,0) -- (3.4,0); 
\draw (3.6,0.025) -- (4.4,0.5); 
\draw (3.6,-0.025) -- (4.35,-0.5); 
\node [left] at (3.5,-0.5) {$\tau_{l}|_\mathfrak{z} = - id, l \ge 3.$}; 
\end{tikzpicture} 

\end{tabular} & 

\begin{tabular}{c} 

can not be extended.

\end{tabular}  \\  

\hline

\begin{tabular}{c}

\begin{tikzpicture}
\draw (1,0) circle [radius = 0.1];
\draw (2,0) circle [radius = 0.1]; 
\draw (3.5,0) circle [radius = 0.1]; 
\draw (4.5,0.5) circle [radius = 0.1];
\draw (4.5,0.5) circle [radius = 0.15];
\draw (4.5,-0.5) circle [radius = 0.1];

\node [above] at (1.05,0.05) {$\phi_2$}; 
\node [above] at (2.05,0.05) {$\phi_3$}; 
\node [above] at (3.4,0.05) {$\phi_{l-2}$}; 
\node [right] at (4.5,0.5) {$\phi_{l-1}$}; 
\node [right] at (4.5,-0.5) {$\phi_l $}; 
\node [left] at (0.5,0) {$\tau_{l-1}|_{[\mathfrak{k},\mathfrak{k}]}: $}; 

\draw (1.1,0) -- (1.9,0);
\draw (2.1,0) -- (2.5,0); 
\draw [dotted] (2.5,0) -- (3,0); 
\draw (3,0) -- (3.4,0);
\draw (3.5,0) circle [radius = 0.1];
\draw (3.6,0.025) -- (4.35,0.45); 
\draw (3.6,-0.025) -- (4.4,-0.5); 
\node [left] at (3.5,-0.5) {$\tau_{{l-1}}|_\mathfrak{z} = - id, l \ge 3.$}; 
\end{tikzpicture} 

\end{tabular} & 

\begin{tabular}{c} 

can not be extended.

\end{tabular}  \\

\hline

\begin{tabular}{c}   

\begin{tikzpicture}
\draw (1,0.4) circle [radius = 0.1]; 
\draw (1,0.4) circle [radius = 0.15]; 
\draw (1,-0.4) circle [radius = 0.1]; 
\draw (1,-0.4) circle [radius = 0.15]; 
\node [right] at (1.1,0.4) {$\phi_2$}; 
\node [right] at (1.1,-0.4) {$\phi_3$};
\node [left] at (0,0) {$\sigma_0 :$};
\node [below] at (-0.5,-0.05) {$(l=3)$};
\node [left] at (1.5,-0.8) {$\sigma_{0}|_\mathfrak{z} = id.$}; 
\end{tikzpicture}  
\end{tabular} & 

\begin{tabular}{c}  

\begin{tikzpicture}
\filldraw [black] (0,0) circle [radius = 0.1]; 
\draw (1,0.5) circle [radius = 0.1]; 
\draw (1,-0.5) circle [radius = 0.1]; 
\draw (1,0.5) circle [radius = 0.15]; 
\draw (1,-0.5) circle [radius = 0.15];
\filldraw[black] (2,0) circle [radius = 0.1];

\node [right] at (1.05,.6) {$\phi_2 $};
\node [right] at (1.05,-.55) {$\phi_3 $};
\node [left] at (0,0) {$\phi_1 $};
\node [right] at (2,0) {$-\delta $};
\node [left] at (-0.5,0) {$\sigma_0: $};

\draw (0.1,0) -- (.85,0.5); 
\draw (1.15,0.5) -- (1.9,0);
\draw (0.1,0) -- (.85,-0.5);
\draw (1.15,-0.5) -- (2,0); 
\end{tikzpicture}  
 
\begin{tikzpicture}
\filldraw [black] (0,0) circle [radius = 0.1]; 
\draw (1,0.5) circle [radius = 0.1]; 
\draw (1,-.5) circle [radius = 0.1]; 
\draw (1,0.5) circle [radius = 0.15]; 
\draw (1,-.5) circle [radius = 0.15];
\draw (0,0) circle [radius = 0.15];
\filldraw[black] (2,0) circle [radius = 0.1];
\draw (2,0) circle [radius = 0.15];

\node [right] at (1.05,.6) {$\phi_2 $};
\node [right] at (1.05,-.55) {$\phi_3 $};
\node [left] at (0,0) {$\phi_1 $};
\node [right] at (2,0) {$-\delta $};
\node [left] at (-.6,0) {$\sigma_0\theta: $};

\draw (0.15,0) -- (.85,0.5); 
\draw (1.15,0.5) -- (1.85,0);
\draw (0.15,0) -- (.85,-0.5);
\draw (1.15,-0.5) -- (1.85,0); 
\end{tikzpicture}     \\

\end{tabular} \\ 
 
\hline
 
\begin{tabular}{c} 
  
\begin{tikzpicture}
\draw (1,0.4) circle [radius = 0.1]; 
\draw (1,0.4) circle [radius = 0.15]; 
\draw (1,-0.4) circle [radius = 0.1]; 
\draw (1,-0.4) circle [radius = 0.15]; 
\node [right] at (1.1,0.4) {$\phi_2$}; 
\node [right] at (1.1,-0.4) {$\phi_3$};
\node [left] at (0,0) {$\sigma_0' :$};
\node [below] at (-0.5,-0.05) {$(l=3)$};
\node [left] at (1.5,-1) {$\sigma'_{0}|_\mathfrak{z} = - id.$}; 
\end{tikzpicture}
 
\end{tabular} & 

\begin{tabular}{c}  

\begin{tikzpicture}
\filldraw [black] (0,0) circle [radius = 0.1]; 
\draw (1,0.5) circle [radius = 0.1]; 
\draw (1,-0.5) circle [radius = 0.1]; 
\draw (1,0.5) circle [radius = 0.15]; 
\draw (1,-0.5) circle [radius = 0.15];
\filldraw[black] (2,0) circle [radius = 0.1];

\node [right] at (1.05,0.6) {$\phi_2 $};
\node [right] at (1.05,-0.55) {$\phi_3 $};
\node [left] at (0,0) {$\phi_1 $};
\node [right] at (2,0) {$-\delta $};
\node [left] at (-.5,0) {$\sigma_0' , \sigma_0'\theta: $};

\draw (0.1,0) -- (.85,0.5); 
\draw (1.15,0.5) -- (1.9,0);
\draw (0.1,0) -- (.85,-0.5);
\draw (1.15,-0.5) -- (1.9,0); 
\draw (0.1,0) -- (1.9,0); 
\draw (0.1,0) -- (.35,0.1); 
\draw (0.1,0) -- (.35,-0.1);
\draw (1.65,0.1) -- (1.9,0);
\draw (1.65,-0.1) -- (1.9,0);
\end{tikzpicture}  
 
\end{tabular} \\  
\hline 

\end{tabular}

\end{table} 

\begin{table} 

\begin{tabular}{||c|c||}
\hline
\begin{tabular}{c}   
$l=2; \sigma_1|_{\mathfrak{z}_1} = id, \sigma_1|_{\mathfrak{z}_2} = id.$ 
\end{tabular} & 
\begin{tabular}{c} 

\begin{tikzpicture} 
\filldraw [black] (0,0.5) circle [radius = 0.15]; 
\draw (0,0.5) circle [radius = 0.22]; 
\filldraw [black] (0,-0.5) circle [radius = 0.15]; 
\draw [black] (0,-0.5) circle [radius = 0.22];
\filldraw [black] (1.5,0.5) circle [radius = 0.15];   
\filldraw [black] (1.5,-0.5) circle [radius = 0.15]; 

\node [left] at (-0.05,0.5) {$-\phi_1 $};
\node [left] at (-0.05,-0.5) {$\phi_1$};
\node [right] at (1.5,.5) {$-\phi_2 $};
\node [right] at (1.5,-.5) {$\phi_2 $};
\node [left] at (-.3,0) {$\sigma_1 :$};

\draw (0.04,0.3) -- (0.04,-0.3);
\draw (-0.04,0.3) -- (-0.04,-0.3);
\draw (-.12,0.3) -- (-.12,-0.3);
\draw (0.12,0.3) -- (0.12,-0.3);

\draw (1.37,0.5) -- (1.37,-0.5);
\draw (1.45,0.5) -- (1.45,-0.5);
\draw (1.54,0.5) -- (1.54,-0.5);
\draw (1.63,0.5) -- (1.63,-0.5);
\end{tikzpicture} 
 
\begin{tikzpicture}
\filldraw [black] (0,0.5) circle [radius = 0.15]; 
\filldraw [black] (0,-0.5) circle [radius = 0.15]; 
\filldraw [black] (1.5,0.5) circle [radius = 0.15]; 
\draw (1.5,0.5) circle [radius = 0.22]; 
\filldraw [black] (1.5,-0.5) circle [radius = 0.15]; 
\draw [black] (1.5,-0.5) circle [radius = 0.22];

\node [left] at (0,0.5) {$-\phi_1 $};
\node [left] at (0,-.5) {$\phi_1$};
\node [right] at (1.6,.5) {$-\phi_2 $};
\node [right] at (1.6,-.5) {$\phi_2 $};
\node [left] at (-.3,0) {$\sigma_1\theta :$};

\draw (0.04,0.5) -- (0.04,-0.5);
\draw (-0.04,0.5) -- (-0.04,-0.5);
\draw (-.12,0.5) -- (-.12,-0.5);
\draw (0.12,0.5) -- (0.12,-0.5);

\draw (1.37,0.3) -- (1.37,-0.3);
\draw (1.45,0.3) -- (1.45,-0.3);
\draw (1.54,0.3) -- (1.54,-0.3);
\draw (1.63,0.3) -- (1.63,-0.3);
\end{tikzpicture} 
 
\end{tabular}  \\  
\hline
\begin{tabular}{c} 
$l=2; \eta_1|_{\mathfrak{z}_1} = -id, \eta_1|_{\mathfrak{z}_2} = id.$ 
\end{tabular} & 
\begin{tabular}{c} 

\begin{tikzpicture}
\filldraw [black] (0,0.5) circle [radius = 0.15]; 
\filldraw [black] (0,-0.5) circle [radius = 0.15]; 
\filldraw [black] (1.5,0.5) circle [radius = 0.15];   
 \filldraw [black] (1.5,-0.5) circle [radius = 0.15]; 

\node [left] at (-0.1,0.5) {$-\phi_1 $};
\node [left] at (-0.1,-.5) {$\phi_1$};
\node [right] at (1.5,.5) {$-\phi_2 $};
\node [right] at (1.5,-.5) {$\phi_2 $};
\node [left] at (-.3,0) {$\eta_1 :$};

\draw (0.04,0.5) -- (0.04,-0.5);
\draw (-0.04,0.5) -- (-0.04,-0.5);
\draw (-.12,0.5) -- (-.12,-0.5);
\draw (0.12,0.5) -- (0.12,-0.5);

\draw (1.37,0.5) -- (1.37,-0.5);
\draw (1.45,0.5) -- (1.45,-0.5);
\draw (1.54,0.5) -- (1.54,-0.5);
\draw (1.63,0.5) -- (1.63,-0.5);

\draw (0,-0.5) to[out=180, in=180] (0,0.5);

\draw (-0.1,0.5) -- (-0.3,0.39);
\draw (-0.1,0.5) -- (-0.2,0.2);
\draw (-0.1,-0.5) -- (-.2, -.2);
\draw (-0.1,-0.5) -- (-.3, -.4);
\end{tikzpicture}

\begin{tikzpicture}
\filldraw [black] (0,0.5) circle [radius = 0.15]; 
\filldraw [black] (0,-0.5) circle [radius = 0.15]; 
\filldraw [black] (1.5,0.5) circle [radius = 0.15]; 
\draw (1.5,0.5) circle [radius = 0.22]; 
\filldraw [black] (1.5,-0.5) circle [radius = 0.15]; 
\draw [black] (1.5,-0.5) circle [radius = 0.22];

\node [left] at (-0.1,0.5) {$-\phi_1 $};
\node [left] at (-0.1,-.5) {$\phi_1$};
\node [right] at (1.6,.5) {$-\phi_2 $};
\node [right] at (1.6,-.5) {$\phi_2 $};
\node [left] at (-.3,0) {$\eta_1\theta :$};

\draw (0.04,0.5) -- (0.04,-0.5);
\draw (-0.04,0.5) -- (-0.04,-0.5);
\draw (-.12,0.5) -- (-.12,-0.5);
\draw (0.12,0.5) -- (0.12,-0.5);

\draw (1.37,0.3) -- (1.37,-0.3);
\draw (1.45,0.3) -- (1.45,-0.3);
\draw (1.54,0.3) -- (1.54,-0.3);
\draw (1.63,0.3) -- (1.63,-0.3);
 
\draw (0,-0.5) to[out=180, in=180] (0,0.5);

\draw (-0.1,0.5) -- (-0.3,0.39);
\draw (-0.1,0.5) -- (-0.2,0.2);
\draw (-0.1,-0.5) -- (-.2, -.2); 
\draw (-0.1,-0.5) -- (-.3, -.4);
\end{tikzpicture} 

\end{tabular}  \\
\hline
\begin{tabular}{c} 
$l=2 ; \eta_2|_{\mathfrak{z}_1} = id, \eta_2|_{\mathfrak{z}_2} = -id.$ 
\end{tabular} & 
\begin{tabular}{c} 

\begin{tikzpicture}
\filldraw [black] (0,0.5) circle [radius = 0.15]; 
\filldraw [black] (0,-0.5) circle [radius = 0.15]; 
\filldraw [black] (1.5,0.5) circle [radius = 0.15];   
\filldraw [black] (1.5,-0.5) circle [radius = 0.15]; 

\node [left] at (0,0.5) {$-\phi_1 $};
\node [left] at (0,-.5) {$\phi_1$};
\node [right] at (1.6,.5) {$-\phi_2 $};
\node [right] at (1.6,-.5) {$\phi_2 $};
\node [left] at (-.3,0) {$\eta_2 :$};

\draw (0.04,0.5) -- (0.04,-0.5);
\draw (-0.04,0.5) -- (-0.04,-0.5);
\draw (-.12,0.5) -- (-.12,-0.5);
\draw (0.12,0.5) -- (0.12,-0.5);

\draw (1.37,0.5) -- (1.37,-0.5);
\draw (1.45,0.5) -- (1.45,-0.5);
\draw (1.54,0.5) -- (1.54,-0.5);
\draw (1.63,0.5) -- (1.63,-0.5);

\draw (1.5,-0.55) to[out=360, in=360] (1.5,0.55);

\draw (1.65,0.52)-- (1.9,.33);
\draw (1.65,0.52)-- (1.72,.23);
\draw (1.65,-0.52)-- (1.9,-.33);
\draw (1.65,-0.52)-- (1.72,-.23);
\end{tikzpicture}
 
\begin{tikzpicture}
\filldraw [black] (0,0.5) circle [radius = 0.15]; 
\draw (0,0.5) circle [radius = 0.22]; 
\filldraw [black] (0,-0.5) circle [radius = 0.15]; 
\draw [black] (0,-0.5) circle [radius = 0.22];
\filldraw [black] (1.5,0.5) circle [radius = 0.15];   
\filldraw [black] (1.5,-0.5) circle [radius = 0.15]; 

\node [left] at (-0.1,0.5) {$-\phi_1 $};
\node [left] at (-0.1,-.5) {$\phi_1$};
\node [right] at (1.6,.5) {$-\phi_2 $};
\node [right] at (1.6,-.5) {$\phi_2 $};
\node [left] at (-.3,0) {$\eta_2\theta :$};

\draw (0.04,0.3) -- (0.04,-0.3);
\draw (-0.04,0.3) -- (-0.04,-0.3);
\draw (-.12,0.3) -- (-.12,-0.3);
\draw (0.12,0.3) -- (0.12,-0.3);

\draw (1.37,0.5) -- (1.37,-0.5);
\draw (1.45,0.5) -- (1.45,-0.5);
\draw (1.54,0.5) -- (1.54,-0.5);
\draw (1.63,0.5) -- (1.63,-0.5);

\draw (1.5,-0.55) to[out=360, in=360] (1.5,0.55);

\draw (1.65,0.52)-- (1.9,.33);
\draw (1.65,0.52)-- (1.72,.23);
\draw (1.65,-0.52)-- (1.9,-.33);
\draw (1.65,-0.52)-- (1.72,-.23);
\end{tikzpicture} 

\end{tabular}  \\
\hline
\begin{tabular}{c} 
$l=2 ; \mu_1|_{\mathfrak{z}_1} = -id, \mu_1|_{\mathfrak{z}_2} = -id.$ 
\end{tabular} & 
\begin{tabular}{c} 

\begin{tikzpicture}
\filldraw [black] (0,0.5) circle [radius = 0.15]; 
\filldraw [black] (0,-0.5) circle [radius = 0.15]; 
\filldraw [black] (1.5,0.5) circle [radius = 0.15];   
\filldraw [black] (1.5,-0.5) circle [radius = 0.15]; 

\node [left] at (-0.1,0.5) {$-\phi_1 $};
\node [left] at (-0.1,-.5) {$\phi_1$};
\node [right] at (1.6,.5) {$-\phi_2 $};
\node [right] at (1.6,-.5) {$\phi_2 $};
\node [left] at (-.3,0) {$\mu_1 , \mu_1\theta :$};

\draw (0.04,0.5) -- (0.04,-0.5);
\draw (-0.04,0.5) -- (-0.04,-0.5);
\draw (-.12,0.5) -- (-.12,-0.5);
\draw (0.12,0.5) -- (0.12,-0.5);

\draw (1.37,0.5) -- (1.37,-0.5);
\draw (1.45,0.5) -- (1.45,-0.5);
\draw (1.54,0.5) -- (1.54,-0.5);
\draw (1.63,0.5) -- (1.63,-0.5);

\draw (0,-0.5) to[out=180, in=180] (0,0.5);

\draw (-0.1,0.5) -- (-0.3,0.39);
\draw (-0.1,0.5) -- (-0.2,0.2);
\draw (-0.1,-0.5) -- (-.2, -.2);
\draw (-0.1,-0.5) -- (-.3, -.4);
    
\draw (1.5,-0.55) to[out=360, in=360] (1.5,0.55);

\draw (1.65,0.52)-- (1.9,.33);
\draw (1.65,0.52)-- (1.72,.23);
\draw (1.65,-0.52)-- (1.9,-.33);
\draw (1.65,-0.52)-- (1.72,-.23);
\end{tikzpicture}

\end{tabular}  \\
\hline
\begin{tabular}{c}
$l=2 ; \tau'_1(H_{\omega_1})=H_{\omega_2}, \tau'_1(H_{\omega_2})=H_{\omega_1}.$
\end{tabular} & 
\begin{tabular}{c} 

\begin{tikzpicture}
\filldraw [black] (0,0.5) circle [radius = 0.15]; 
\filldraw [black] (0,-0.5) circle [radius = 0.15]; 
\filldraw [black] (1.5,0.5) circle [radius = 0.15];   
\filldraw [black] (1.5,-0.5) circle [radius = 0.15]; 

\node [left] at (0,0.5) {$-\phi_1 $};
\node [left] at (0,-.5) {$\phi_1$};
\node [right] at (1.5,.5) {$-\phi_2 $};
\node [right] at (1.5,-.5) {$\phi_2 $};
\node [left] at (-.3,0) {$\tau'_1, \tau'_1\theta :$};

\draw (0.04,0.5) -- (0.04,-0.5);
\draw (-0.04,0.5) -- (-0.04,-0.5);
\draw (-.12,0.5) -- (-.12,-0.5);
\draw (0.12,0.5) -- (0.12,-0.5);

\draw (1.37,0.5) -- (1.37,-0.5);
\draw (1.45,0.5) -- (1.45,-0.5);
\draw (1.54,0.5) -- (1.54,-0.5);
\draw (1.63,0.5) -- (1.63,-0.5);

\draw  (0.04,0.5) -- (1.37,0.5);
\draw (-0.04,-0.5) -- (1.37,-0.5);

\draw (0.08,0.5) -- (.25, 0.6);
\draw (0.08,0.5) -- (.25, 0.4);
\draw (1.42,0.5) -- (1.25,0.6);
\draw (1.42,0.5) -- (1.25,0.4);

\draw (0.08,-0.5) -- (.25,-0.4);
\draw (0.08,-0.5) -- (.25,-0.6);
\draw (1.42,-.5) -- (1.25,-.4);
\draw (1.42,-.5) -- (1.25,-.6);
\end{tikzpicture}

\end{tabular}  \\
\hline
\begin{tabular}{c}  
$l=2 ; \tau_1(H_{\omega_1})= -H_{\omega_2}, \tau_1(H_{\omega_2})= - H_{\omega_1}.$ 
\end{tabular} & 
\begin{tabular}{c} 

\begin{tikzpicture}
\filldraw [black] (0,0.5) circle [radius = 0.15]; 
\filldraw [black] (0,-0.5) circle [radius = 0.15]; 
\filldraw [black] (1.5,0.5) circle [radius = 0.15];   
\filldraw [black] (1.5,-0.5) circle [radius = 0.15]; 

\node [left] at (0,0.5) {$-\phi_1 $};
\node [left] at (0,-.5) {$\phi_1$};
\node [right] at (1.5,.5) {$-\phi_2 $};
\node [right] at (1.5,-.5) {$\phi_2 $};
\node [left] at (-.3,0) {$\tau_1, \tau_1\theta :$};

\draw (0.04,0.5) -- (0.04,-0.5);
\draw (-0.04,0.5) -- (-0.04,-0.5);
\draw (-.12,0.5) -- (-.12,-0.5);
\draw (0.12,0.5) -- (0.12,-0.5);

\draw (1.37,0.5) -- (1.37,-0.5);
\draw (1.45,0.5) -- (1.45,-0.5);
\draw (1.54,0.5) -- (1.54,-0.5);
\draw (1.63,0.5) -- (1.63,-0.5);

\draw (0,.55) -- (1.5,-0.55);
\draw (0,-.55) -- (1.5,0.55);

\draw (.1,.48)-- (.2,.25);
\draw (.1,.48)-- (.3,.48);
\draw (1.4,-.45) -- ( 1.2,-.5);
\draw (1.4,-.45) -- ( 1.3,-.25);

\draw (0.1,-.45) -- (.2,-.25);
\draw (0.1,-.45) -- (.3,-.45);
\draw (1.4,.45) -- (1.2,.5);
\draw (1.4,.45) -- (1.3,.25);
\end{tikzpicture}

\end{tabular}  \\
\hline
\end{tabular}

\end{table}

Let $\omega_1, \omega_2, \ldots , \omega_l$ be the fundamental weights associated with $\phi_1, \phi_2, \ldots , \phi_l$ respectively 
relative to the set of simple roots $\Phi = \{\phi_1, \phi_2, \ldots , \phi_l\}.$ If $m = 2$ that is, $\frak{g}_0 = \frak{so}(2,2),$ then $\frak{k} = \frak{k}_1 \oplus \frak{k}_2,$ 
where $\frak{k}_i = \mathbb{C}H_{\omega_i} \cong \frak{so}(2,\mathbb{C}) (i = 1,2)$ are abelian ideals in $\frak{k}.$ Clearly $\frak{z} = \frak{z}_1 \oplus \frak{z}_2,$ where 
$\frak{z}_i = \frak{k}_i,$ the centre of $\frak{k}_i (i = 1,2).$ 
Now assume that $m \ge 3.$ Then $\frak{z} = \mathbb{C}(H_{\omega_1}).$ If $\frak{g}_0 = \frak{so}(2, 2l-1),$ then $\omega_1 = \gamma,$ where 
$\gamma = \phi_1 + \phi_2 + \cdots + \phi_l,$ a non-compact root. If $\frak{g}_0 = \frak{so}(2, 2l-2),$ then $2\omega_1 = \gamma_1 + \gamma_2,$ where 
$\gamma_1 =  \phi_1 + \phi_2 + \cdots + \phi_{l-1}, \gamma_2 =  \phi_1 + \phi_2 + \cdots +\phi_{l-2} +\phi_l.$ The roots $\gamma_1$ and $\gamma_2$ are non-compact and 
$\gamma_1 \pm \gamma_2 \notin \Delta.$ Recall that for $\alpha \in \Delta^+, X_\alpha = E_\alpha - E_{-\alpha}, Y_\alpha = i(E_\alpha + E_{-\alpha}).$ 
Consider the Cayley transform \[c = 
\begin{cases} 
\textrm{Ad}(\textrm{exp}(\frac{\pi}{4} X_\gamma)) \textrm{ if } \frak{g}_0 = \frak{so}(2, 2l-1), \\
\textrm{Ad}(\textrm{exp}(\frac{\pi}{4}(X_{\gamma_1}+X_{\gamma_2})))  \textrm{ if } \frak{g}_0 = \frak{so}(2, 2l-2). 
\end{cases}
\] Then $c(Y_{\gamma}) = iH_{\gamma}^*$ and $c|_{Ker(\gamma)} = id,$ if $\frak{g}_0 = \frak{so}(2, 2l-1).$ 
If $\frak{g}_0 = \frak{so}(2, 2l-2),$ $c(Y_{\gamma_j}) = iH_{\gamma_j}^* (j = 1,2)$ and $c|_{Ker(\gamma_1)\cap Ker(\gamma_2)} = id.$
Let $\sigma \in \textrm{inv}(\frak{g}_0)$ be such that $\sigma|_\frak{z} = id$ and \\ 
$\sigma|_{\frak{g}^\gamma} = -id$ if $\frak{g}_0 = \frak{so}(2, 2l-1),$ \\
$\sigma|_{\frak{g}^{\gamma_j}} = -id (j = 1,2)$ if $\frak{g}_0 = \frak{so}(2, 2l-2).$ \\ 
Then $c\sigma c^{-1}(H^*_{\omega_1}) = -H^*_{\omega_1}$ that is, $c\sigma c^{-1}|_\frak{z} = - id.$ Hence $c\sigma c^{-1}(H_{\phi_1}) = H_{-\delta}$ and 
$c\sigma c^{-1}$ is an involution. Keeping this in mind, we are ready to determine a representative for each involution of $\frak{g}_0$ whose almost double Vogan diagram 
is given in Tables \ref{inv1} and \ref{inv2}. We have the following proposition: 

\begin{proposition}\label{prep}
For each almost double Vogan diagram $\sigma$ given in Table \ref{inv1}or \ref{inv2}, there is an involution of $G$ commuting with $\theta$ such that 
the differential at identity is in $\Gamma \textrm{ or } \Gamma',$ and has the almost double Vogan diagram $\sigma.$ 
\end{proposition}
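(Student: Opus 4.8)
The plan is to produce, for every entry of Tables~\ref{inv1} and~\ref{inv2}, an \emph{explicit} involution of $G$ commuting with $\theta$ whose differential at the identity lies in one of the lattices $\Gamma,\Gamma'$ built in Section~\ref{lattice}. The key reduction is the following. An automorphism $\sigma$ of $\frak{g}_0$ commuting with $\theta$ preserves $\frak{k}_0=\mathrm{span}_{\mathbb R}B_{\frak k}$ and $\frak{p}_0=\mathrm{span}_{\mathbb R}B_n$, and since $\sigma(v\,x)=v\,\sigma(x)$ for $x\in\frak{p}_0$, the matrix of $\sigma$ in the basis $B_F=B_{\frak k}\cup vB_n$ defining $\Gamma$ equals its matrix in the basis $B$ of~\ref{gbasis}; likewise for $B'_F$ and $B'$. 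Moreover, if $\sigma$ also commutes with conjugation of $\frak g$ with respect to the compact form $\frak u=\frak k_0\oplus i\frak p_0$ (so that $\sigma(E_\alpha)$ and $\sigma(E_{-\alpha})$ acquire the same scalar), then an integer matrix in the Chevalley basis $\{H_\phi^*:\phi\in\Phi\}\cup\{E_\alpha:\alpha\in\Delta\}$ of~\ref{chevalley} forces an integer matrix in $B$. So it suffices, for each table entry, to produce an involution of $\frak{g}_0$ realizing the prescribed almost double Vogan diagram, commuting with $\theta$ and with the conjugation with respect to $\frak u$, and having integer matrix in the Chevalley basis adapted to $\frak{t}_0$ \emph{or} to $\frak{t}'_0$; it then lies in $\Gamma$ or in $\Gamma'$, and, being realized by conjugation by an element of the group $O(2,m)$ from which the tables are built (inner in $G$ in the cases $\textrm{Ad}(\textrm{exp}(\pi i H))$ with $\pi i H\in\frak k_0$), it is the differential of an involution of $G$ commuting with $\theta$.

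Every involution of $\frak{k}_0$ in the left-hand columns is of the form $\delta\circ\textrm{Ad}(\textrm{exp}(\pi i H))$, where $\delta$ is trivial or, for $D_l$, the order-two graph automorphism interchanging $\phi_{l-1}$ and $\phi_l$, and $H\in\frak h$ is a coweight with $\phi_j(H)\in\mathbb Z$ for all $j$ and $\phi_j(H)$ odd exactly at the single circled vertex. When the associated almost double Vogan diagram \emph{fixes} the two black vertices $\phi_1,-\delta$ (the cases $\sigma|_{\frak z}=\mathrm{id}$: the families $\sigma_p$, $\tau'_p$, $\sigma_{l-1}$, $\sigma_l$, $\sigma_0$, and $\sigma_1$ when $l=2$) one extends $\sigma$ to $\frak{g}_0$ by taking $\delta$ to fix $E_{\phi_1}$ and $\frak g^{-\delta}$, realizing $\delta$ as $\textrm{Ad}$ of $f$ applied to a signed permutation matrix in $O(m+2,\mathbb Z)$; the resulting map is a signed permutation of the Chevalley basis composed with a diagonal $\pm1$ matrix, hence integral, and comparing it with the marks $a_\psi$ on $\Phi\cup\{-\delta\}$ and $c_\psi$ on $D_{\frak k}$ shows that it has exactly the listed colours and circled vertex. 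Whether one must use the Chevalley data of $\frak t_0$ or of $\frak t'_0$ to make this matrix integral is decided case by case; $\frak t'_0$ is what is needed for the families built from the graph automorphism of the $D_{l-1}$ sitting inside $[\frak k,\frak k]$.

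For the entries whose almost double Vogan diagram \emph{interchanges} the two black vertices ($\sigma|_{\frak z}=-\mathrm{id}$: the families $\tau_p$, $\mu_p$, $\sigma'_0$, and $\eta_1,\eta_2,\mu_1,\tau_1,\tau'_1$ when $l=2$) we use the Cayley transform $c$ introduced just before the statement. Start from an involution $\sigma_0$ of the previous type with $\sigma_0|_{\frak z}=\mathrm{id}$ and $\sigma_0=-\mathrm{id}$ on $\frak g^{\gamma}$ (resp. on $\frak g^{\gamma_1}$ and $\frak g^{\gamma_2}$ for $D_l$). As noted there, $c\sigma_0 c^{-1}$ is again an involution, with $c\sigma_0 c^{-1}(H_{\phi_1})=H_{-\delta}$ and $c\sigma_0 c^{-1}|_{\frak z}=-\mathrm{id}$, while its restriction to $D_{\frak k}$ is the Vogan diagram of $\sigma_0$ (possibly pre-composed with the graph automorphism, which commutes with $c$ because it interchanges $\gamma_1$ and $\gamma_2$); hence it realizes the required entry. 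Writing $\sigma_0=\textrm{Ad}(\textrm{exp}(\pi i H_0))$ gives $c\sigma_0 c^{-1}=\textrm{Ad}(\textrm{exp}(\pi i\, c(H_0)))$, and the relations $c(Y_\gamma)=iH_\gamma^*$, $c|_{\mathrm{Ker}(\gamma)}=\mathrm{id}$ (resp. their $\gamma_1,\gamma_2$ analogues) let one write $c(H_0)$ as an integral combination of $H_\gamma^*$ and $E_\gamma+E_{-\gamma}$ (resp. of the $H_{\gamma_j}^*$ and the $E_{\gamma_j}+E_{-\gamma_j}$). Running through the $\frak{sl}_2$-strings in direction $\gamma$ (resp. $\gamma_1$ and $\gamma_2$) one checks that $\textrm{Ad}(\textrm{exp}(\pi i(E_\gamma+E_{-\gamma})))$, resp. its analogue, has order two and acts by a sign on each Chevalley basis vector; so $c\sigma_0 c^{-1}$ is again integral in the Chevalley basis and lies in $\Gamma$ or in $\Gamma'$. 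When $l=2$, where $\frak g\cong\frak{sl}(2,\mathbb C)\oplus\frak{sl}(2,\mathbb C)$, the argument is run on each simple ideal separately, the ideal-exchanging involutions being realized by $\textrm{Ad}$ of $f$ of the evident signed permutation matrix, which is integral.

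The main obstacle is this last integrality check: confirming, for each of the finitely many ``interchange'' entries, that $\textrm{Ad}(\textrm{exp}(\pi i(E_\gamma+E_{-\gamma})))$ and its $D_l$ analogue with $\gamma_1,\gamma_2$ are involutions permuting the Chevalley basis up to signs. This requires explicit control of the structure constants $N_{\alpha\beta}$ along the root strings through $\gamma$, $\gamma_1$ and $\gamma_2$, and a verification that the induced action on $\frak h$ is integral. A secondary, essentially combinatorial, task is to check, entry by entry, that the chosen coweight $H$ (or $H_0$) produces exactly the colours and the single circled vertex prescribed by the diagram, using the coefficients of $\gamma$ (or of $\gamma_1,\gamma_2$) in the roots of $\frak g$, and to decide in each case whether $\frak t_0$ or $\frak t'_0$ yields the integral representative.
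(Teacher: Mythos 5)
Your proposal follows essentially the same route as the paper: explicit involutions of $\frak{g}_0$ realizing each table entry (inner ones, $\mathrm{Ad}$ of signed diagonal or signed permutation matrices, for the diagrams fixing the two black vertices; Cayley-transform conjugates for those interchanging them; and the second Cartan subalgebra $\frak{t}'_0$ for the families involving the $D_{l-1}$ graph automorphism of $[\frak{k},\frak{k}]$), with membership in $\Gamma$ or $\Gamma'$ reduced to a signed-permutation (hence integral) action on the Chevalley-adapted basis. The one point where the paper is simpler is that it disposes of what you call the main obstacle by computing $\exp(\frac{\pi}{2}X_\gamma)=I-2(E_{11}+E_{2l+1,2l+1})$ (and its $D_l$ analogue) directly in the defining $(m+2)\times(m+2)$ representation, so each Cayley conjugate collapses to conjugation by an explicit integer diagonal matrix and no bookkeeping of the structure constants $N_{\alpha,\beta}$ along root strings is needed.
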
 

To prove this proposition, it is sufficient to prove that there is an involution of $G$ commuting with $\theta$ whose differential at identity has 
the almost double Vogan diagram $\sigma$ and for some simple roots $\phi$ and $\psi,$ if $\sigma(\phi) = \psi,$ then $\sigma (E_\phi) = p_\phi E_\psi$ where $p_\phi = \pm1.$ 
We have proved this below. 

\subsection{$G = SO_0(2,2l-1)\ (l \ge 1)$:} Then $\frak{g}_0 = \frak{so}(2, 2l-1).$ First assume that $l \ge 2.$ For $2 \le p \le l,$ let $\sigma_p(X) = 
I_{2p,2l+1-2p} X I_{2p,2l+1-2p}$ for all $X \in M_{2l+1 \times 2l+1}(\mathbb{C}).$ Then $\sigma_p : G \longrightarrow G$ is an involution commuting with $\theta$ and 
the differential of $\sigma_p$ at the identity is $\sigma_p : \frak{g}_0 \longrightarrow \frak{g}_0.$ 
If $\frak{t}_0 =\sum_{1 \le j \le l} \mathbb{R}H_j$ as in \S \ref{lattice}, then $\sigma_p|_{\frak{t}_0} = id.$ 
We have $\sigma_p(\phi_j) = \phi_j$ for all $1\le j \le l,$ and 
\[\sigma_p|_{\frak{g}^{\phi_j}} = 
\begin{cases}
id \textrm{ for } j \neq p, \\ 
-id \textrm{ for } j = p. 
\end{cases} \]
Thus $\sigma_p, \sigma_p \theta$ are involutions of $\frak{g}_0$ whose 
almost double Vogan diagrams are given in Table \ref{inv1} as $\sigma_p, \sigma_p\theta$ respectively. 

Let $\gamma =  \phi_1 + \phi_2 \cdots + \phi_l = e_1,\ X_\gamma = D_1^+- \overline{D_1^+} = -2i(E_{1,2l+1} + E_{2l+1,1})$ and for $2 \le p \le l,\ \tilde{\tau}_p = 
a_{\textrm{exp}(\frac{\pi}{4}X_\gamma)} \sigma_p a_{\textrm{exp}(-\frac{\pi}{4}X_\gamma)},$ where for $g \in M_{2l+1 \times 2l+1}(\mathbb{C}),\ a_g(X) = gXg^{-1}$ for 
all $X \in  M_{2l+1 \times 2l+1}(\mathbb{C}).$ So $\tilde{\tau}_p(X) = \textrm{exp}(\frac{\pi}{4}X_\gamma) \sigma_p(\textrm{exp}(-\frac{\pi}{4}X_\gamma) X 
\textrm{exp}(\frac{\pi}{4}X_\gamma)) \textrm{exp}(-\frac{\pi}{4}X_\gamma) = \textrm{exp}(\frac{\pi}{2}X_\gamma) I_{2p,2l+1-2p} X I_{2p,2l+1-2p} 
\textrm{exp} (-\frac{\pi}{2}X_\gamma).$ Since $\textrm{exp}(\frac{\pi}{2}X_\gamma) = I + (\cos \pi - 1)(E_{11} + E_{2l+1,2l+1}) -i \sin \pi (E_{1,2l+1} + E_{2l+1,1}) = 
I - 2(E_{11} + E_{2l+1,2l+1}),$ we have $\tilde{\tau}_p(X) =(I - 2(E_{11} + E_{2l+1,2l+1}))$ \\ $I_{2p,2l+1-2p} X I_{2p,2l+1-2p} (I - 2(E_{11} + E_{2l+1,2l+1})) = 
\textrm{diag}(1, -I_{2p-1}, I_{2l-2p}, -1) X \textrm{diag}(1, -I_{2p-1},$ \\ $I_{2l-2p}, -1).$ Thus for $2 \le p \le l-1$ and $2 \le j \le l,$ 
\[\tilde{\tau}_p|_{\frak{g}^{\phi_j}} = 
\begin{cases}
id \textrm{ for } j \neq p,l, \\ 
-id \textrm{ for } j = p,l; \textrm{ and }
\end{cases} \]
\[\tilde{\tau}_l|_{\frak{g}^{\phi_j}} = id \textrm{ for all } 2 \le j \le l. \]
 For $2 \le p \le l,$ define $\tau_p = \sigma_l \tilde{\tau}_p$ so that $\tau_p(X) = \textrm{diag}(-1, I_{2p-1}, -I_{2l-2p+1}) X \textrm{diag}(-1, I_{2p-1},$ \\ $-I_{2l-2p+1})$ for all 
$X \in  M_{2l+1 \times 2l+1}(\mathbb{C}).$ For $l \ge 1,$ define $\tau_1(X) = \textrm{diag}(-1, 1, -I_{2l-1}) X \textrm{diag}(-1,$ \\ $1,-I_{2l-1})$ for all 
$X \in  M_{2l+1 \times 2l+1}(\mathbb{C}).$
Then for $1 \le p \le l,$ $\tau_p (H_{\phi_1}) = H_{-\delta}, \tau_p(H_{\phi_j}) = H_{\phi_j},$ 
\[\tau_p|_{\frak{g}^{\phi_j}} = 
\begin{cases}
id \textrm{ if } j \neq p, \\ 
-id \textrm{ if } j = p, 
\end{cases} \]
where $2 \le j \le l;$ and $\tau_p(G_{12}^+) = -\overline{G_{12}^-}$ for $2 \le p \le l,\ \tau_1(G_{12}^+) = \overline{G_{12}^-},\ \tau_1(D_1^+) = -\overline{D_1^+}.$ 
Thus $\tau_p : G \longrightarrow G$ is an involution commuting with $\theta,$ 
the differential of $\tau_p$ at the identity is $\tau_p : \frak{g}_0 \longrightarrow \frak{g}_0,\ \tau_p, \tau_p \theta$ are involutions of $\frak{g}_0$ whose 
almost double Vogan diagram is given in Table \ref{inv1} as $\tau_p$ (or $\tau_p\theta).$ 

\subsection{$G = SO_0(2,2l-2)\ (l \ge 3)$:} Then $\frak{g}_0 = \frak{so}(2, 2l-2).$ First assume that $l \ge 4.$ For $2 \le p \le l-2,$ let $\sigma_p(X) = 
I_{2p,2l-2p} X I_{2p,2l-2p}$ for all $X \in M_{2l \times 2l}(\mathbb{C}).$ Then $\sigma_p : G \longrightarrow G$ is an involution commuting with $\theta$ and 
the differential of $\sigma_p$ at the identity is $\sigma_p : \frak{g}_0 \longrightarrow \frak{g}_0.$ 
If $\frak{t}_0 =\sum_{1 \le j \le l} \mathbb{R}H_j$ as in \S \ref{lattice}, then $\sigma_p|_{\frak{t}_0} = id.$ 
We have $\sigma_p(\phi_j) = \phi_j$ for all $1\le j \le l,$ and 
\[\sigma_p|_{\frak{g}^{\phi_j}} = 
\begin{cases}
id \textrm{ for } j \neq p, \\ 
-id \textrm{ for } j = p. 
\end{cases} \]
Thus $\sigma_p, \sigma_p \theta$ are involutions of $\frak{g}_0$ whose 
almost double Vogan diagrams are given in Table \ref{inv2} as $\sigma_p, \sigma_p\theta$ respectively. 

Let $\gamma_1 =  \phi_1 + \phi_2 + \cdots + \phi_{l-2} + \phi_{l-1} = e_1 - e_l,\  \gamma_2 =  \phi_1 + \phi_2 + \cdots + \phi_{l-2} + \phi_l = e_1 + e_l, 
\ X_{\gamma_1} = \frac{1}{2}(G_{1l}^+- \overline{G_{1l}^+}) = i(E_{1,2l-1} + E_{2,2l} + E_{2l-1,1} + E_{2l,2}),
\ X_{\gamma_2} = \frac{1}{2}(G_{1l}^- - \overline{G_{1l}^-}) = i(-E_{1,2l-1} + E_{2,2l} - E_{2l-1,1} + E_{2l,2})$
and for $2 \le p \le l-2,\ \mu_p = a_{\textrm{exp}(\frac{\pi}{4}(X_{\gamma_1} + X_{\gamma_2}))} \sigma_p a_{\textrm{exp}(-\frac{\pi}{4}(X_{\gamma_1} + X_{\gamma_2}))},$ 
where for $g \in M_{2l \times 2l}(\mathbb{C}),\ a_g(X) = gXg^{-1}$ for 
all $X \in  M_{2l \times 2l}(\mathbb{C}).$ So $\mu_p(X) = \textrm{exp}(\pi i(E_{2,2l} + E_{2l,2})) I_{2p,2l-2p} X I_{2p,2l-2p} 
\textrm{exp} (-\pi i(E_{2,2l} + E_{2l,2})).$ Since $\textrm{exp}(\pi i(E_{2,2l} + E_{2l,2})) = I + (\cos \pi - 1)(E_{22} + E_{2l,2l}) + i \sin \pi (E_{2,2l} + E_{2l,2}) = 
I - 2(E_{22} + E_{2l,2l}),$ we have $\mu_p(X) = \textrm{diag}(-1,1, -I_{2p-2},$ \\ $I_{2l-2p-1}, -1) X \textrm{diag}(-1,1, -I_{2p-2},I_{2l-2p-1}, -1)$ for all 
$X \in  M_{2l \times 2l}(\mathbb{C}).$ For $l \ge 3,$ define $\mu_1(X) = \textrm{diag}(-1,I_{2l-2},-1) X \textrm{diag}(-1,I_{2l-2},-1)$ for all 
$X \in  M_{2l \times 2l}(\mathbb{C}).$
Then for $1 \le p \le l-2,$ $\mu_p (H_{\phi_1}) = H_{-\delta}, \mu_p(H_{\phi_{l-1}}) = H_{\phi_l}, \mu_p(H_{\phi_j}) = H_{\phi_j},$ 
\[\mu_p|_{\frak{g}^{\phi_j}} = 
\begin{cases}
id \textrm{ if } j \neq p, \\ 
-id \textrm{ if } j = p, 
\end{cases} \]
where $2 \le j \le l-2;$ and $\mu_p(G_{12}^+) = \overline{G_{12}^-},\ \mu_p(G_{(l-1)l}^+) = -G_{(l-1)l}^-$ for $2 \le p \le l-2,\ \mu_1(G_{12}^+) = -\overline{G_{12}^-},\ 
\mu_1(G_{(l-1)l}^+) = -G_{(l-1)l}^-.$ Thus $\mu_p : G \longrightarrow G$ is an involution commuting with $\theta,$ 
the differential of $\mu_p$ at the identity is $\mu_p : \frak{g}_0 \longrightarrow \frak{g}_0, \ \mu_p, \mu_p \theta$ are involutions of $\frak{g}_0$ whose 
almost double Vogan diagram is given in Table \ref{inv2} as $\mu_p$ (or $\mu_p\theta).$ 

Let $l \ge 3.$ For $1 \le p \le l-2,$ define $\tau_p(X) = \textrm{diag}(-1,I_{2p-1},-I_{2l-2p}) X \textrm{diag}(-1,I_{2p-1},-I_{2l-2p})$ for all 
$X \in M_{2l \times 2l}(\mathbb{C}).$ Then for $1 \le p \le l-2,$ $\tau_p (H_{\phi_1}) = H_{-\delta}, \tau_p(H_{\phi_j}) = H_{\phi_j},$ 
\[\tau_p|_{\frak{g}^{\phi_j}} = 
\begin{cases}
id \textrm{ if } j \neq p, \\ 
-id \textrm{ if } j = p,
\end{cases} \]
where $2 \le j \le l;$ and $\tau_p(G_{12}^+) = -\overline{G_{12}^-}$ for $2 \le p \le l,\ \tau_1(G_{12}^+) = \overline{G_{12}^-}.$ 
Thus $\tau_p : G \longrightarrow G$ is an involution commuting with $\theta,$ 
the differential of $\tau_p$ at the identity is $\tau_p : \frak{g}_0 \longrightarrow \frak{g}_0,\ \tau_p, \tau_p \theta$ are involutions of $\frak{g}_0$ whose 
almost double Vogan diagram is given in Table \ref{inv2} as $\tau_p$ (or $\tau_p\theta).$ 

Let $l \ge 3.$ For $1 \le p \le l-2,$ define $\tau'_p(X) = \textrm{diag}(-I_{2p}, I_{2l-2p-1},-1) X \textrm{diag}(-I_{2p}, I_{2l-2p-1},-1)$ for all 
$X \in M_{2l \times 2l}(\mathbb{C}).$ Then for $1 \le p \le l-2,$ $\tau'_p (H_{\phi_{l-1}}) = H_{\phi_l}, \tau'_p(H_{\phi_j}) = H_{\phi_j},$ 
\[\tau'_p|_{\frak{g}^{\phi_j}} = 
\begin{cases}
id \textrm{ if } j \neq p \textrm{ or } p = 1, \\ 
-id \textrm{ if } j = p \neq 1, 
\end{cases} \]
where $1 \le j \le l-2;$ and $\tau'_p(G_{(l-1)l}^+) = -G_{(l-1)l}^-.$ 
Thus $\tau'_p : G \longrightarrow G$ is an involution commuting with $\theta,$ 
the differential of $\tau'_p$ at the identity is $\tau'_p : \frak{g}_0 \longrightarrow \frak{g}_0,\ \tau'_p, \tau'_p \theta$ are involutions of $\frak{g}_0$ whose 
almost double Vogan diagrams are given in Table \ref{inv2} as $\tau'_p$ and $\tau'_p\theta$ respectively. 

Assume that $l \ge 3.$ For any two non-negative integers $p, q,$ define $J_{p,q} = \left(
\begin{array}{ccc}
0 & I_{p,q} \\
-I_{p,q} & 0 \\
\end{array}
\right) \in M_{2(p+q) \times 2(p+q)}(\mathbb{R}).$ Define $\sigma_l(X) = J_0 X J_0^{-1}$ for all $X \in M_{2l \times 2l}(\mathbb{C}),$ where $J_0 = \textrm{diag}(J_{0,1}, J_{0,l-1}).$ 
Then $\sigma_l : G \longrightarrow G$ is an involution commuting with $\theta$ and the differential of $\sigma_l$ at the identity is $\sigma_l : \frak{g}_0 \longrightarrow \frak{g}_0.$ 
We have $\frak{g}_0(\sigma_l) = \{X \in \frak{g}_0 : \sigma_l(X) = X \} = 
\{\left(
\begin{array}{ccccccc}
0 & c & X'_2 & X''_2 \\
-c & 0 & -X''_2 & X'_2 \\
 {X'_2}^t & -{X''_2}^t & A & B \\ 
 {X''_2}^t & {X'_2}^t & -B & A 
 \end{array}
\right): c \in \mathbb{R}, A \in \frak{so}(l-1), B \in M_{(l-1) \times (l-1)}(\mathbb{R}) \textrm{ is symmetric}, X'_2, X''_2 \in M_{1 \times (l-1)}(\mathbb{R})\},$ 
$\frak{g}_0(-\sigma_l) = 
\{\left(
\begin{array}{ccccccc}
0 & 0 & X'_2 & X''_2 \\
0 & 0 & X''_2 & -X'_2 \\
 {X'_2}^t & {X''_2}^t & A & B \\ 
 {X''_2}^t & -{X'_2}^t & B & -A 
 \end{array}
\right): A, B \in \frak{so}(l-1), X'_2, X''_2 \in M_{1 \times (l-1)}(\mathbb{R})\}.$ 
Consider the maximal abelian subalgebra $\frak{t}'_0$ of $\frak{k}_0$ defined in \S \ref{lattice}. Since $\sigma_l|_{\frak{t}'_0} = id,$ 
we have $\sigma_l(\phi'_j) = \phi'_j$ for all $1\le j \le l.$ Also
\[\sigma_l|_{\frak{g}^{\phi'_j}} = 
\begin{cases}
id \textrm{ for } j \neq 1,l, \\ 
-id \textrm{ for } j = 1,l. 
\end{cases} \]
Thus $\sigma_l, \sigma_l \theta$ are involutions of $\frak{g}_0$ whose 
almost double Vogan diagrams are given in Table \ref{inv2} as $\sigma_l, \sigma_l\theta$ respectively. 

Assume that $l \ge 3.$ Define $\sigma_{l-1}(X) = J'_0 X {J'}_0^{-1}$ for all $X \in M_{2l \times 2l}(\mathbb{C}),$ where $J'_0 = \textrm{diag}(J_{0,1}, J_{l-2,1}).$ 
Then $\sigma_{l-1} : G \longrightarrow G$ is an involution commuting with $\theta$ and the differential of $\sigma_{l-1}$ at the identity is $\sigma_{l-1} : \frak{g}_0 \longrightarrow \frak{g}_0.$ 
We have $\frak{g}_0(\sigma_{l-1}) = \{X \in \frak{g}_0 : \sigma_{l-1}(X) = X \} = 
\{\left(
\begin{array}{ccccccccccc}
0 & c & X'_2 & x'_2 & X''_2 & x''_2 \\
-c & 0 & -X''_2 & x''_2 & X'_2 & -x'_2 \\
 {X'_2}^t & -{X''_2}^t & A_1 & A_2 & B_1 & B_2 \\ 
x'_2 & x''_2 & -A_2^t & 0 & -B_2^t & b_3 \\
 {X''_2}^t & {X'_2}^t & -B_1 & B_2 & A_1 & -A_2 \\ 
x''_2 & -x'_2 & -B_2^t & -b_3 & A_2^t & 0 \\ 
 \end{array}
\right): c, x'_2, x''_2, b_3 \in \mathbb{R}, A_1 \in \frak{so}(l-2), B_1 \in M_{(l-2) \times (l-2)}(\mathbb{R}) \textrm{ is symmetric}, X'_2, X''_2,A_2^t, B_2^t \in M_{1 \times (l-2)}(\mathbb{R})\},$ 
$\frak{g}_0(-\sigma_{l-1}) = 
\{\left(
\begin{array}{ccccccccccc}
0 & 0 & X'_2 & x'_2 & X''_2 & x''_2 \\
0 & 0 & X''_2 & -x''_2 & -X'_2 & x'_2 \\
 {X'_2}^t & {X''_2}^t & A_1 & A_2 & B_1 & B_2 \\ 
x'_2 & -x''_2 & -A_2^t & 0 & B_2^t & 0 \\
 {X''_2}^t & -{X'_2}^t & B_1 & -B_2 & -A_1 & A_2 \\ 
x''_2 & x'_2 & -B_2^t & 0 & -A_2^t & 0 \\ 
 \end{array}
\right): x'_2, x''_2 \in \mathbb{R}, A_1, B_1 \in \frak{so}(l-2), X'_2, X''_2, A_2^t, B_2^t \in M_{1 \times (l-2)}(\mathbb{R})\}.$ 
Consider the maximal abelian subalgebra $\frak{t}'_0$ of $\frak{k}_0$ defined in \S \ref{lattice}. Since $\sigma_{l-1}|_{\frak{t}'_0} = id,$ 
we have $\sigma_{l-1}(\phi'_j) = \phi'_j$ for all $1\le j \le l.$ Also
\[\sigma_{l-1}|_{\frak{g}^{\phi'_j}} = 
\begin{cases}
id \textrm{ for } j \neq 1,l-1, \\ 
-id \textrm{ for } j = 1,l-1. 
\end{cases} \]
Thus $\sigma_{l-1}, \sigma_{l-1} \theta$ are involutions of $\frak{g}_0$ whose 
almost double Vogan diagrams are given in Table \ref{inv2} as $\sigma_{l-1}, \sigma_{l-1}\theta$ respectively. 

Assume that $l = 3.$ Define $\sigma_0 = \sigma_{l-1} \sigma_l.$ Since $\sigma_{l-1} \sigma_l = \sigma_l \sigma_{l-1},$ 
$\sigma_0 : G \longrightarrow G$ is an involution commuting with $\theta$ and the differential of $\sigma_0$ at the identity is $\sigma_0 : \frak{g}_0 \longrightarrow \frak{g}_0.$ 
Clearly $\sigma_0, \sigma_0 \theta$ are involutions of $\frak{g}_0$ whose 
almost double Vogan diagrams are given in Table \ref{inv2} as $\sigma_0, \sigma_0 \theta$ respectively. 

Let $l = 3.$ Define $\sigma'_0(X) = \textrm{diag}(-1,1,-1,1,-1,1) X \textrm{diag}(-1,1,-1,1,-1,1)$ for all 
$X \in M_{6 \times 6}(\mathbb{C}).$ Then $\sigma'_0 (H_{\phi'_1}) = H_{-\delta'}, \sigma'_0 (H_{\phi'_j}) = H_{\phi'_j},$ and 
\[{\sigma'_0}|_{\frak{g}^{\phi'_j}} = -id \textrm{ if } j = 2,3. \]
 Also $\sigma'_0({G'}_{12}^+) = \overline{{G'}_{12}^-}.$ 
Thus $\sigma'_0 : G \longrightarrow G$ is an involution commuting with $\theta,$ 
the differential of $\sigma'_0$ at the identity is $\sigma'_0 : \frak{g}_0 \longrightarrow \frak{g}_0,\ \sigma'_0, \sigma'_0 \theta$ are involutions of $\frak{g}_0$ whose 
almost double Vogan diagram is given in Table \ref{inv2} as $\sigma'_0$ (or $\sigma'_0\theta).$ 

\subsection{$G = SO_0(2,2)$:} Define $\sigma_1(X) = \textrm{diag}(-J_{0,1}, J_{0,1}) X \textrm{diag}(J_{0,1}, -J_{0,1})$ for all 
$X \in M_{4 \times 4}(\mathbb{C}).$ Then $\sigma_1 (H_{\phi_j}) = H_{\phi_j}$ for $j = 1,2,$ and 
${\sigma_1}|_{\frak{g}^{\pm \phi_1}} = -id,\ {\sigma_1}|_{\frak{g}^{\pm \phi_2}} = id.$ 
Thus $\sigma_1 : G \longrightarrow G$ is an involution commuting with $\theta,$ 
the differential of $\sigma_1$ at the identity is $\sigma_1 : \frak{g}_0 \longrightarrow \frak{g}_0,\ \sigma_1, \sigma_1 \theta$ are involutions of $\frak{g}_0$ whose 
almost double Vogan diagrams are given in Table \ref{inv2} as $\sigma_1$ and $\sigma_1\theta$ respectively. 

Define $\eta_1(X) = J'_2 X J'_2$ for all $X \in M_{4 \times 4}(\mathbb{C}),$ where $J'_2 = \left(
\begin{array}{ccc}
0 & I_2 \\
I_2 & 0 \\
\end{array}
\right).$ 
Then $\eta_1 (H_{\phi_1}) = -H_{\phi_1},\ \eta_1 (H_{\phi_2}) = H_{\phi_2}$ and 
$\eta_1(G_{12}^+) = -\overline{G_{12}^+},\ {\eta_1}|_{\frak{g}^{\pm \phi_2}} = id.$ 
Thus $\eta_1 : G \longrightarrow G$ is an involution commuting with $\theta,$ 
the differential of $\eta_1$ at the identity is $\eta_1 : \frak{g}_0 \longrightarrow \frak{g}_0,\ \eta_1, \eta_1 \theta$ are involutions of $\frak{g}_0$ whose 
almost double Vogan diagrams are given in Table \ref{inv2} as $\eta_1$ and $\eta_1\theta$ respectively. 

Define $\eta_2(X) = J'_{1,1} X J'_{1,1}$ for all $X \in M_{4 \times 4}(\mathbb{C}),$ where $J'_{1,1} = \left(
\begin{array}{ccc}
0 & I_{1,1} \\
I_{1,1} & 0 \\
\end{array}
\right).$ 
Then $\eta_2 (H_{\phi_1}) = H_{\phi_1},\ \eta_2 (H_{\phi_2}) = -H_{\phi_2}$ and 
${\eta_2}|_{\frak{g}^{\pm \phi_1}} = id,\ \eta_2(G_{12}^-) = -\overline{G_{12}^-}.$
Thus $\eta_2 : G \longrightarrow G$ is an involution commuting with $\theta,$ 
the differential of $\eta_2$ at the identity is $\eta_2 : \frak{g}_0 \longrightarrow \frak{g}_0,\ \eta_2, \eta_2 \theta$ are involutions of $\frak{g}_0$ whose 
almost double Vogan diagrams are given in Table \ref{inv2} as $\eta_2$ and $\eta_2\theta$ respectively. 

Define $\mu_1(X) = \textrm{diag}(-1,I_2,-1) X \textrm{diag}(-1,I_2,-1)$ for all 
$X \in M_{4 \times 4}(\mathbb{C}).$ Then $\mu_1 (H_{\phi_j}) = -H_{\phi_j}$ for $j = 1,2,$ and 
$\mu_1(G_{12}^+) = \overline{G_{12}^+},\ \mu_1(G_{12}^-) = \overline{G_{12}^-}.$ 
Thus $\mu_1 : G \longrightarrow G$ is an involution commuting with $\theta,$ 
the differential of $\mu_1$ at the identity is $\mu_1 : \frak{g}_0 \longrightarrow \frak{g}_0,\ \mu_1, \mu_1 \theta$ are involutions of $\frak{g}_0$ whose 
almost double Vogan diagram is given in Table \ref{inv2} as $\mu_1$ (or $\mu_1\theta$). 

Define $\tau'_1(X) = \textrm{diag}(-I_2,1,-1) X \textrm{diag}(-I_2,1,-1)$ for all 
$X \in M_{4 \times 4}(\mathbb{C}).$ Then $\tau'_1 (H_{\phi_1}) = H_{\phi_2},\ \tau'_1 (H_{\phi_2}) = H_{\phi_1}$ and 
$\tau'_1(G_{12}^+) = G_{12}^-,\ \tau'_1(G_{12}^-) = G_{12}^+.$ 
Thus $\tau'_1 : G \longrightarrow G$ is an involution commuting with $\theta,$ 
the differential of $\tau'_1$ at the identity is $\tau'_1 : \frak{g}_0 \longrightarrow \frak{g}_0,\ \tau'_1, \tau'_1 \theta$ are involutions of $\frak{g}_0$ whose 
almost double Vogan diagram is given in Table \ref{inv2} as $\tau'_1$ (or $\tau'_1\theta$). 

Define $\tau_1(X) = \textrm{diag}(-1,1,-I_2) X \textrm{diag}(-1,1,-I_2)$ for all 
$X \in M_{4 \times 4}(\mathbb{C}).$ Then $\tau_1 (H_{\phi_1}) = -H_{\phi_2},\ \tau_1 (H_{\phi_2}) = -H_{\phi_1}$ and 
$\tau_1(G_{12}^+) = \overline{G_{12}^-},\ \tau_1(G_{12}^-) = \overline{G_{12}^+}.$ 
Thus $\tau_1 : G \longrightarrow G$ is an involution commuting with $\theta,$ 
the differential of $\tau_1$ at the identity is $\tau_1 : \frak{g}_0 \longrightarrow \frak{g}_0,\ \tau_1, \tau_1 \theta$ are involutions of $\frak{g}_0$ whose 
almost double Vogan diagram is given in Table \ref{inv2} as $\tau_1$ (or $\tau_1\theta$). 

\noindent 
\section{Orientation preserving action of some reductive subgroups of $SO_0(2,m)$}\label{or} 

Let $\sigma (\neq \theta)$ be an involution of $G$ commuting with $\theta$ and the differential of $\sigma$ at identity is an involution of $\frak{g}_0,$ 
denoted by the same notation $\sigma.$ Define $G(\sigma) = \{g \in G : \sigma(g) = g\}, K(\sigma) = K \cap G(\sigma), \frak{g}_0(\sigma) = \{X \in \frak{g}_0 : \sigma (X) = X \}, 
\frak{k}_0(\sigma) = \frak{k}_0 \cap \frak{g}_0(\sigma)$ and $\frak{p}_0(\sigma) = \frak{p}_0 \cap \frak{g}_0(\sigma).$ Then $G(\sigma)$ is a closed reductive subgroup of $G$ 
with Lie algebra $\frak{g}_0(\sigma), K(\sigma)$ is a maximal compact subgroup of $G(\sigma)$ and $X(\sigma) = G(\sigma)/K(\sigma)$ is a Riemannian globally symmetric 
space of non-compact type. We need to know that for which involutions $\sigma,$ the canonical action of $G(\sigma)$ is orientation preserving on $X(\sigma).$ 
Since $G(\sigma) = K(\sigma)\textrm{exp}(\frak{p}_0(\sigma)),$ it is sufficient to check whether the canonical action of $K(\sigma)$ on $X(\sigma)$ is 
orientation preserving. If $\it o = K(\sigma)$ is the identity coset in $X(\sigma)$, then $K(\sigma)(\it o) = \it o$ and the differential  
of this action is given by $\textrm{Ad} :  K(\sigma) \longrightarrow T_{\it o}(X(\sigma))$. Hence it is sufficient to check whether 
\begin{equation}\label{oreq}
\textrm{det}(\textrm{Ad}(k)|_{\frak{p}_0(\sigma)}) = 1 \textrm{ for all } k \in  K(\sigma).  
\end{equation}
Since $\frak{p}_0(\sigma\theta) = \frak{p}_0(-\sigma)$ and 
det$(\textrm{Ad}(k)|_{\frak{p}_0}) = 1$ for all $k \in  K,$ we have condition \ref{oreq} holds for $\sigma$ {\it iff} it holds for $\sigma \theta.$ 
That is, $G(\sigma)$ acts orientation preservingly on $X(\sigma)$ {\it iff} 
 $G(\sigma \theta)$ acts orientation preservingly on $X(\sigma \theta).$ In general, for all $k \in  K(\sigma),$  Ad$(k) : \frak{p}_0(\sigma) \longrightarrow \frak{p}_0(\sigma)$ is a 
linear isometry, where the innere product on $\frak{p}_0(\sigma)$ is induced from the Killing form $B$ of $\frak{g}.$ So 
det$(\textrm{Ad}(k)|_{\frak{p}_0(\sigma)}) = \pm 1$ for all $k \in  K(\sigma).$
Now the complex structure of the Hermitian symmetric space $X = G/K$ is given by ad$(H_0)$ 
for some $H_0$ in the centre of $\frak{k}_0.$ If $\sigma|_\frak{z} = id$ that is, if $\sigma$ induces a holomorphic diffeomorphism on $X,$ 
then $\frak{p}_0(\sigma)$ is a complex subspace of $\frak{p}_0$ and $X(\sigma)$ is a Hermitian symmetric space. 
Also Ad$(k) : \frak{p}_0(\sigma) \longrightarrow \frak{p}_0(\sigma)$ is a complex linear map for all $k \in  K(\sigma).$ So 
condition \ref{oreq} holds for involutions $\sigma$ with $\sigma|_\frak{z} = id,$ by the following lemma. 

\begin{lemma} 
Let $V$ is a finite dimensional complex vector space and $T : V \longrightarrow V$ be a complex linear map. Let $V^\mathbb{R}$ be the 
real vector space $V$ and $T^\mathbb{R}$ be the real linear map $T$ on $V^\mathbb{R}.$ Then det$(T^\mathbb{R}) > 0.$
\end{lemma}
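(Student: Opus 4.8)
The plan is to prove a sharper statement, namely that
\[
\det(T^{\mathbb{R}}) = \lvert \det\nolimits_{\mathbb{C}}(T)\rvert^{2},
\]
from which $\det(T^{\mathbb{R}}) \ge 0$ is immediate and $\det(T^{\mathbb{R}}) > 0$ holds precisely when $T$ is invertible. (This invertible case is the only one needed above, since there $T = \textrm{Ad}(k)|_{\frak{p}_0(\sigma)}$ for $k \in K(\sigma)$, which is invertible; so establishing the displayed identity suffices for condition \ref{oreq}.) Everything reduces to one explicit determinant computation.

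First I would fix a $\mathbb{C}$-basis $e_1,\dots,e_n$ of $V$ and take $\{e_1,\dots,e_n,\, ie_1,\dots,ie_n\}$ as an $\mathbb{R}$-basis of $V^{\mathbb{R}}$. Writing the matrix of $T$ in the basis $(e_j)$ as $A = P + iQ$ with $P,Q \in M_n(\mathbb{R})$, the identities $T(e_j) = \sum_k P_{kj} e_k + \sum_k Q_{kj}(ie_k)$ and $T(ie_j) = iT(e_j) = \sum_k(-Q_{kj})e_k + \sum_k P_{kj}(ie_k)$ show that the matrix of $T^{\mathbb{R}}$ in this ordered real basis is the block matrix
\[
M = \begin{pmatrix} P & -Q \\ Q & P \end{pmatrix} \in M_{2n}(\mathbb{R}).
\]
Any other ordering of the real basis changes $M$ by conjugation with a permutation matrix and hence does not affect the determinant, so $\det(T^{\mathbb{R}}) = \det M$.

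Next I would compute $\det M$ by block-triangularising over $\mathbb{C}$. With $U = \begin{pmatrix} I_n & iI_n \\ 0 & I_n \end{pmatrix}$ and $U^{-1} = \begin{pmatrix} I_n & -iI_n \\ 0 & I_n \end{pmatrix}$ one checks directly that
\[
U M U^{-1} = \begin{pmatrix} P + iQ & 0 \\ Q & P - iQ \end{pmatrix},
\]
which is block lower triangular; hence $\det M = \det(P+iQ)\det(P-iQ) = (\det A)(\det\overline{A}) = (\det A)\overline{(\det A)} = \lvert\det\nolimits_{\mathbb{C}}(T)\rvert^{2}$. Combined with the previous step this yields the displayed identity, and in particular $\det(T^{\mathbb{R}}) > 0$ whenever $T$ is invertible, which is what is used to verify condition \ref{oreq} for the involutions $\sigma$ with $\sigma|_{\frak{z}} = id$.

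There is no genuine obstacle here: this is a standard fact of linear algebra, and the only points needing a little care are the bookkeeping that produces the block matrix $M$ and the block conjugation identity above. I would present this matrix argument because it is entirely self-contained, but I note two alternatives that could be substituted: conceptually, the complexification $V^{\mathbb{R}}\otimes_{\mathbb{R}}\mathbb{C}$ is $T^{\mathbb{R}}$-equivariantly isomorphic to $V \oplus \overline{V}$, giving $\det(T^{\mathbb{R}}) = \det\nolimits_{\mathbb{C}}(T)\,\det\nolimits_{\mathbb{C}}(\overline{T}) = \lvert\det\nolimits_{\mathbb{C}}(T)\rvert^{2}$ at once; or one can first treat diagonalisable $T$, where each complex eigenvalue $\lambda = a + ib$ contributes a real $2\times 2$ block of determinant $a^{2}+b^{2} = \lvert\lambda\rvert^{2}$, and then pass to the general case by density of diagonalisable matrices together with continuity of $\det$.
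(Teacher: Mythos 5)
Your proof is correct, and it takes a genuinely different (and slightly sharper) route than the paper. The paper triangularises $T$ over $\mathbb{C}$, passes to the real basis $\{v_1, iv_1, \ldots, v_n, iv_n\}$, observes that $T^{\mathbb{R}}$ is then block upper triangular with diagonal $2\times 2$ blocks $\left(\begin{smallmatrix} a_{kk} & -b_{kk} \\ b_{kk} & a_{kk} \end{smallmatrix}\right)$, and concludes $\det(T^{\mathbb{R}}) = \prod_k (a_{kk}^2 + b_{kk}^2)$. You instead compute the full real matrix $M = \left(\begin{smallmatrix} P & -Q \\ Q & P \end{smallmatrix}\right)$ in one coordinate system and block-triangularise it by an explicit conjugation, obtaining the identity $\det(T^{\mathbb{R}}) = \lvert\det_{\mathbb{C}}(T)\rvert^{2}$. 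Your version buys the exact formula rather than just a product of nonnegative terms, and it makes explicit a point that both the lemma's statement and the paper's proof gloss over: strict positivity fails for non-invertible $T$ (in the paper's notation, some $a_{kk}^2 + b_{kk}^2$ can vanish, so the asserted inequality $\prod_k(a_{kk}^2+b_{kk}^2) > 0$ implicitly assumes invertibility). This is harmless in the application, since there $T = \mathrm{Ad}(k)|_{\frak{p}_0(\sigma)}$ is invertible, as you correctly note. Your closing alternatives are also sound; the second one (diagonalisable case plus density) is close in spirit to the paper's argument but needs the extra limiting step, whereas the paper's triangularisation avoids it.
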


\begin{proof} 
Since $T$ is upper triangulable, there exists a basis $\{v_1, v_2, \dots ,v_n \}$ of $V$ such that $T(v_l) = \sum_{1 \le k \le l} z_{kl} v_k,$ where 
$z_{kl} \in \mathbb{C},$ for all $1 \le k, l \le n.$ Let $z_{kl} = a_{kl} + i b_{kl},$ for some $a_{kl}, b_{kl} \in \mathbb{C},$ for all $k, l.$ Note that 
$\{v_1, iv_1, v_2, iv_2, \ldots , v_n, iv_n \}$ is a basis of $V^\mathbb{R}$ and $T^\mathbb{R}(v_l) = \sum_{1 \le k \le l} (a_{kl} v_k + b_{kl}(iv_k)),\ 
T^\mathbb{R}(iv_l) = \sum_{1 \le k \le l} ( - b_{kl}v_k + a_{kl} (iv_k))$ for all $k, l.$ So det$(T^\mathbb{R}) = \prod_{1 \le k \le n} \textrm{det}\left(
\begin{array}{ccc}
a_{kk} & -b_{kk} \\
b_{kk} & a_{kk} \\
\end{array}
\right) = \prod_{1 \le k \le n} (a^2_{kk} + b^2_{kk}) > 0.$ 
\end{proof} 

So if $\sigma|_\frak{z} = id,$ then $G(\sigma)$ acts orientation preservingly on $X(\sigma).$ Actually we have the following proposition. 

\begin{proposition}\label{orp}
If $m = 2l-1,$ then $G(\sigma)$ acts orientation preservingly on $X(\sigma)$ for $\sigma = \sigma_p, \sigma_p \theta (2 \le p \le l, l \ge 2)$ in the Table \ref{inv1}; and 
$G(\sigma)$ acts orientation reversingly on $X(\sigma)$ for $\sigma = \tau_p, \tau_p \theta (1 \le p \le l, l \ge 1)$ in the Table \ref{inv1}. \\ 
If $m = 2l-2 (l \ge 2),$ then $G(\sigma)$ acts orientation preservingly on $X(\sigma)$ for all $\sigma$ in the Table \ref{inv2}. 
\end{proposition}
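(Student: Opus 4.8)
The strategy rests on the reduction already in place: $G(\sigma)$ acts orientation preservingly on $X(\sigma)$ iff $\det(\textrm{Ad}(k)|_{\mathfrak{p}_0(\sigma)})=1$ for all $k\in K(\sigma)$, and since $k\mapsto\det(\textrm{Ad}(k)|_{\mathfrak{p}_0(\sigma)})$ is a continuous homomorphism $K(\sigma)\to\{\pm1\}$, it suffices to evaluate it on one representative of each connected component of $K(\sigma)$. Besides the lemma above, two observations will be used repeatedly. (a) If $\sigma$ is conjugation by a block matrix $A_1\oplus A_2$ with $A_1$ of size $2$, then $k_1:=\textrm{diag}(-I_2,I_m)$ lies in $K(\sigma)$ (it is fixed since $A_1(-I_2)A_1^{-1}=-I_2$), and as $\textrm{Ad}(k_1)=-\mathrm{id}$ on $\mathfrak{p}_0$ we get $\det(\textrm{Ad}(k_1)|_{\mathfrak{p}_0(\sigma)})=(-1)^{\dim_{\mathbb R}\mathfrak{p}_0(\sigma)}$. (b) If $\sigma|_{\mathfrak z}=-\mathrm{id}$, then $\sigma$ anticommutes with the complex structure $J=\textrm{ad}(H_0)$ of $\mathfrak{p}_0$, so $J$ maps $\mathfrak{p}_0(\sigma)$ isomorphically onto $\mathfrak{p}_0(-\sigma)$; hence $\dim_{\mathbb R}\mathfrak{p}_0(\sigma)=\tfrac12\dim_{\mathbb R}\mathfrak{p}_0=m$.

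First, every $\sigma$ fixing the generator $H_0\in\mathfrak z$ of the invariant complex structure of $X$ — this covers all $\sigma$ with $\sigma|_{\mathfrak z}=\mathrm{id}$, namely $\sigma_p$ in Table~\ref{inv1} and $\sigma_p,\tau'_p,\sigma_l,\sigma_{l-1},\sigma_0,\sigma_1$ in Table~\ref{inv2}, and also $\tau'_1$ for $l=2$ (there $\mathfrak z$ is two-dimensional, but the factor-swap $\tau'_1$ fixes $H_0=\tfrac i2(H^*_{\phi_1}+H^*_{\phi_2})$) — makes $\mathfrak{p}_0(\sigma)$ a complex subspace of $\mathfrak{p}_0$ on which every $\textrm{Ad}(k)$, $k\in K(\sigma)$, is $\mathbb C$-linear, so $\det(\textrm{Ad}(k)|_{\mathfrak{p}_0(\sigma)})>0$ by the lemma; this is exactly the orientation-preserving case already handled. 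Next, for the exceptional $l=2$ involutions $\eta_1,\eta_2$ of $\mathfrak{so}(2,2)$ (conjugations by the block-swapping matrices $J'_2$ and $J'_{1,1}$), a direct check gives $K(\eta_1)=\{\textrm{diag}(R,R):R\in SO(2)\}$ and $K(\eta_2)=\{\textrm{diag}(R,R^{-1}):R\in SO(2)\}$, both connected, whence $\det(\textrm{Ad}(\,\cdot\,)|_{\mathfrak{p}_0(\eta_i)})\equiv1$.

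There remain the involutions with $\sigma|_{\mathfrak z}=-\mathrm{id}$, all of which are $($block-$)$diagonal conjugations: $\tau_p$ in Table~\ref{inv1}, and $\tau_p,\mu_p,\sigma'_0,\tau_1,\mu_1$ in Table~\ref{inv2}. By (b) we have $\dim_{\mathbb R}\mathfrak{p}_0(\sigma)=m$ in every case. For $\tau_p$ in Table~\ref{inv1}, $m=2l-1$ is odd, so by (a) $\det(\textrm{Ad}(k_1)|_{\mathfrak{p}_0(\tau_p)})=(-1)^m=-1$, and $G(\tau_p)$ — hence $G(\tau_p\theta)$ — acts orientation reversingly. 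For the others, $m=2l-2$ is even, so $k_1$ contributes $+1$; the remaining generators of $\pi_0(K(\sigma))$ are block-orthogonal elements $(\pm I_2,\textrm{diag}(B_1,B_2))$, and since $K(\sigma)\subset SO(2)\times SO(m)$ forces $\det B_1\det B_2=1$, a short computation of the action on the row-block decomposition of $\mathfrak{p}_0(\sigma)$ shows that its determinant equals $\det B_1\det B_2=1$. Thus $\det(\textrm{Ad}(k)|_{\mathfrak{p}_0(\sigma)})=1$ for all $k\in K(\sigma)$, i.e.\ $G(\sigma)$ acts orientation preservingly; this completes Table~\ref{inv2}.

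The bulk of the work, and the step most prone to error, is the precise determination of $\pi_0(K(\sigma))$ together with genuine $\sigma$-fixed representatives of its components and the $K(\sigma)$-module structure of $\mathfrak{p}_0(\sigma)$, read off from the explicit matrices of \S\ref{inv}; in particular the ``obvious'' element $k_1$ need not lie in $K(\sigma)$ when $\sigma$ is a non-diagonal conjugation (as for $\eta_1,\eta_2$), and then one must instead verify connectedness of $K(\sigma)$. Once the correct component representatives and module decompositions are pinned down, every determinant evaluation collapses to (a), (b) and the constraint $\det B_1\det B_2=1$; in this way the dichotomy between the orientation-reversing $\tau_p$ of Table~\ref{inv1} and the orientation-preserving $\tau_p,\mu_p,\dots$ of Table~\ref{inv2} is seen to be governed purely by the parity of $m=\dim_{\mathbb R}\mathfrak{p}_0(\sigma)$.
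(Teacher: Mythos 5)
Your proposal is correct and reaches all the stated conclusions; its skeleton (reduce to $\det(\mathrm{Ad}(k)|_{\mathfrak{p}_0(\sigma)})$ on representatives of $\pi_0(K(\sigma))$, dispose of the holomorphic cases by the complex-linearity lemma, treat $\eta_1,\eta_2$ by connectedness of $K(\eta_i)$) is the same as the paper's. Where you genuinely diverge is in the remaining cases with $\sigma|_{\mathfrak z}=-\mathrm{id}$: the paper writes down an explicit basis of $\mathfrak{p}_0(\sigma)$ for each family ($\tau_p$, $\mu_p$, $\sigma'_0$, \dots), lists the components $Y_2K(\sigma)_0, Y_3K(\sigma)_0, Y_4K(\sigma)_0$, and counts sign changes of $\mathrm{Ad}(Y_i)$ on each basis vector separately, whereas you derive everything from two structural facts: $\mathrm{Ad}(\mathrm{diag}(-I_2,I_m))=-\mathrm{id}$ on all of $\mathfrak{p}_0$, and the anticommutation of $\sigma$ with $J=\mathrm{ad}(H_0)$ forces $\dim_{\mathbb R}\mathfrak{p}_0(\sigma)=m$. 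This buys a conceptual explanation of the dichotomy the paper obtains only by inspection — the $\tau_p$ of Table \ref{inv1} reverse orientation precisely because $m=2l-1$ is odd, while all Table \ref{inv2} cases preserve it because $m=2l-2$ is even — and it also lets you absorb $\tau'_1$ ($l=2$) into the holomorphic case by noting it fixes $H_0$ even though it permutes $\mathfrak{z}_1,\mathfrak{z}_2$. The one place you compress is the claim that the remaining component representatives $(\pm I_2,\mathrm{diag}(B_1,B_2))$ contribute $\det B_1\det B_2=1$; this is right (for diagonal $\sigma$ with $\epsilon_1=-\epsilon_2$ one has $K(\sigma)=\{\pm I_2\}\times S(O(S_+)\times O(S_-))$ and $\mathfrak{p}_0(\sigma)=e_1\otimes\mathbb{R}^{S_-}\oplus e_2\otimes\mathbb{R}^{S_+}$, on which $(\epsilon I_2,\mathrm{diag}(B_+,B_-))$ acts with determinant $\epsilon^m\det B_+\det B_-$), but it deserves the one line of verification you allude to rather than defer.
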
 

To prove this proposition, it remains to check the condition \ref{oreq} for involutions $\sigma$ with $\sigma|_\frak{z} \neq id,$ which is done below. 

Consider the map $\tau_p : G \longrightarrow G$ given by  $\tau_p(X) = \textrm{diag}(-1,I_{2p-1},-I_{m+2-2p}) X \textrm{diag}(-1,I_{2p-1},$ \\ $-I_{m+2-2p})$ for all 
$X \in G,$ where $1 \le p \le \frac{m+1}{2}$ if $m$ is odd, $1 \le p \le \frac{m-2}{2}$ if $m > 2$ is even, $p =1$ if $m = 2.$ Then $K(\tau_p) = \{\textrm{diag}(\pm I_2, X, Y) : 
X \in O(2p-2), Y \in O(m+2 -2p),\textrm{det}(X)\textrm{det}(Y) = 1 \}$ whose identity component is $K(\tau_p)_0 = \{\textrm{diag}(I_2, X, Y) : X \in SO(2p-2), Y \in SO(m+2 -2p) \}.$ 
Therefore the components of $K(\tau_p)$ are 
\[\begin{cases}
K(\tau_p)_0, Y_2 K(\tau_p)_0, Y_3 K(\tau_p)_0, Y_4 K(\tau_p)_0 \textrm{ if } p > 1, \\
K(\tau_p)_0, Y_2 K(\tau_p)_0 \textrm{ if } p = 1; 
\end{cases}\] 
where $Y_2 = \textrm{diag}(-I_2, I_m), Y_3 = \textrm{diag}(I_2, -1, I_{2p-3},-1, I_{m+1-2p}), Y_4 = \textrm{diag}(-I_2, -1, I_{2p-3},-1, $ \\ $I_{m+1-2p}).$ 
We have $\frak{p}_0(\tau_p) = \{\left(
\begin{array}{ccccccc}
0 & 0 & 0 & X_1 \\
0 & 0 & X_2 & 0  \\
0 & X_2^t & 0 & 0 \\ 
X_1^t & 0 & 0 & 0 \\ 
\end{array}
\right) : X_1 \in M_{1 \times (m+2-2p)}(\mathbb{R}), X_2 \in M_{1 \times (2p-2)}(\mathbb{R}) \}.$ Thus $\{E_{1k} +E_{k1}, E_{2r} + E_{r2} : 2p+1 \le k \le m+2, 3 \le r \le 2p \}$ 
is a basis of $\frak{p}_0(\tau_p),$ if $p > 1;$ and $\{E_{1k} +E_{k1} : 3 \le k \le m+2 \}$ is a basis of $\frak{p}_0(\tau_1).$ 
Now $\textrm{Ad}(Y_2)(E_{1k} + E_{k1}) = - (E_{1k} + E_{k1})$ for all $2p+1 \le k \le m+2,\ \textrm{Ad}(Y_2)(E_{2r} + E_{r2}) = - (E_{2r} + E_{r2})$ for all $3 \le r \le 2p (p > 1).$ 
Thus for all $p \ge 1,$ det$(\textrm{Ad}(Y_2)|_{\frak{p}_0(\tau_p)}) = 1$ if $m$ is even, and det$(\textrm{Ad}(Y_2)|_{\frak{p}_0(\tau_p)}) = -1$ if $m$ is odd. Hence if $m$ is odd, 
then $G(\tau_p)$ acts orientation reversingly on $X(\tau_p)$ for all $p \ge 1$ and if $m$ is even, $G(\tau_1)$ acts orientation preservingly on $X(\tau_1).$ Now assume that 
$m$ is even and $p > 1.$ Then $\textrm{Ad}(Y_3)(E_{1k} + E_{k1}) = - (E_{1k} + E_{k1})$ if $k = 2p+1,$ and $\textrm{Ad}(Y_3)(E_{1k} + E_{k1}) = (E_{1k} + E_{k1})$ if 
$k > 2p+1,\ \textrm{Ad}(Y_3)(E_{2r} + E_{r2}) = - (E_{2r} + E_{r2})$ if $r =3,$ and $\textrm{Ad}(Y_3)(E_{2r} + E_{r2}) = (E_{2r} + E_{r2})$ if $r > 3,$ 
$\textrm{Ad}(Y_4)(E_{1k} + E_{k1}) = (E_{1k} + E_{k1})$ if $k = 2p+1,$ and $\textrm{Ad}(Y_3)(E_{1k} + E_{k1}) = -(E_{1k} + E_{k1})$ if 
$k > 2p+1,\ \textrm{Ad}(Y_3)(E_{2r} + E_{r2}) = (E_{2r} + E_{r2})$ if $r=3,$ and $\textrm{Ad}(Y_3)(E_{2r} + E_{r2}) = -(E_{2r} + E_{r2})$ if $r > 3.$ Thus 
det$(\textrm{Ad}(Y_3)|_{\frak{p}_0(\tau_p)}) = 1,$ det$(\textrm{Ad}(Y_4)|_{\frak{p}_0(\tau_p)}) = 1$ and $G(\tau_p)$ acts orientation preservingly on $X(\tau_p).$

Let $m = 2l-2 (l \ge 2)$ and consider the map $\mu_p : G \longrightarrow G$ given by  $\mu_p(X) = \textrm{diag}(-1,1,-I_{2p-2},I_{2l-2p-1},-1) X 
\textrm{diag}(-1,1,-I_{2p-2},I_{2l-2p-1},-1)$ for all $X \in G,$ where $1 \le p \le l-2$ if $l > 2,$ $p =1$ if $l = 2.$ Then $K(\mu_p) = \{\left(
\begin{array}{ccccccc}
\pm I_2 & 0 & 0 & 0 \\
0 & X_{22} & 0 & X_{24}  \\
0 & 0 & X_{33} & 0 \\ 
0 & X_{42} & 0 & x_{44} \\ 
\end{array}
\right) : A = X_{33} \in O(2l-2p-1), X_{22} \in M_{(2p-2) \times (2p-2)}(\mathbb{R}), X_{24} \in M_{(2p-2) \times 1}(\mathbb{R}), 
X_{42} \in M_{1 \times (2p-2)}(\mathbb{R}), x_{44} \in \mathbb{R}, B = \left(
\begin{array}{ccc}
X_{22} & X_{24} \\
X_{42} & x_{44} \\
\end{array}
\right) \in O(2p-1), \textrm{det}(A)\textrm{det}(B) = 1\}$
whose identity component is $K(\mu_p)_0 = \{\left(
\begin{array}{ccccccc}
I_2 & 0 & 0 & 0 \\
0 & X_{22} & 0 & X_{24}  \\
0 & 0 & X_{33} & 0 \\ 
0 & X_{42} & 0 & x_{44} \\ 
\end{array}
\right)  : X_{33} \in SO(2l-2p-1), X_{22} \in M_{(2p-2) \times (2p-2)}(\mathbb{R}), X_{24} \in M_{(2p-2) \times 1}(\mathbb{R}), 
X_{42} \in M_{1 \times (2p-2)}(\mathbb{R}), x_{44} \in \mathbb{R}, \left(
\begin{array}{ccc}
X_{22} & X_{24} \\
X_{42} & x_{44} \\
\end{array}
\right) \in SO(2p-1)\}.$
Therefore the components of $K(\mu_p)$ are 
$K(\mu_p)_0, Y_2 K(\mu_p)_0, Y_3 K(\mu_p)_0, Y_4 K(\mu_p)_0$ for $p \ge 1,$ 
where $Y_2 = \textrm{diag}(-I_2, I_{2l-2}), Y_3 = \textrm{diag}(I_{2l-2},-I_2), Y_4 = \textrm{diag}(-I_2, I_{2l-4},-I_2).$ 
We have $\frak{p}_0(\mu_p) = \{\left(
\begin{array}{ccccccccc}
0 & 0 & X_{11} & 0 & x_{13} \\
0 & 0 & 0 & X_{22} & 0  \\
X_{11}^t & 0 & 0 & 0 & 0 \\ 
0 & X_{22}^t & 0 & 0 & 0 \\
x_{13} & 0 & 0 & 0 & 0 \\ 
\end{array}
\right) : X_{11} \in M_{1 \times (2p-2)}(\mathbb{R}), X_{22} \in M_{1 \times (2l-2p-1)}(\mathbb{R}), x_{13} \in \mathbb{R}\}.$ Thus $\{E_{1k} +E_{k1}, E_{2r} + E_{r2} : 
3 \le k \le 2p \textrm{ or } k = 2l, 2p+1 \le r \le 2l-1 \}$ is a basis of $\frak{p}_0(\mu_p).$ 
Now $\textrm{Ad}(Y_2)(E_{1k} + E_{k1}) = - (E_{1k} + E_{k1})$ for all $3 \le k \le 2p$ or $k = 2l,\ \textrm{Ad}(Y_2)(E_{2r} + E_{r2}) = - (E_{2r} + E_{r2})$ for all $2p+1 \le r \le 2l-1;$ 
$\textrm{Ad}(Y_3)(E_{1k} + E_{k1}) = (E_{1k} + E_{k1})$ for all $3 \le k \le 2p,\ \textrm{Ad}(Y_3)(E_{2r} + E_{r2}) = (E_{2r} + E_{r2})$ for all $2p+1 \le r \le 2l-2,\ 
\textrm{Ad}(Y_3)(E_{2,2l-1} + E_{2l-1,2}) = - (E_{2,2l-1} + E_{2l-1,2}),\ \textrm{Ad}(Y_3)(E_{1,2l} + E_{2l,1}) = - (E_{1,2l} + E_{2l,1});$
$\textrm{Ad}(Y_4)(E_{1k} + E_{k1}) = -(E_{1k} + E_{k1})$ for all $3 \le k \le 2p,\ \textrm{Ad}(Y_4)(E_{2r} + E_{r2}) = - (E_{2r} + E_{r2})$ for all $2p+1 \le r \le 2l-2,\ 
\textrm{Ad}(Y_4)(E_{2,2l-1} + E_{2l-1,2}) = (E_{2,2l-1} + E_{2l-1,2}),\ \textrm{Ad}(Y_4)(E_{1,2l} + E_{2l,1}) = (E_{1,2l} + E_{2l,1}).$
Thus det$(\textrm{Ad}(Y_i)|_{\frak{p}_0(\mu_p)}) = 1,$ for $2 \le i \le 4$ and $G(\mu_p)$ acts orientation preservingly on $X(\mu_p).$

Let $m = 4$ that is, $l = 3$ and consider the map $\sigma'_0 : G \longrightarrow G$ given by  $\sigma'_0(X) = \textrm{diag}(-1,1,-1,1,-1,1) X 
\textrm{diag}(-1,1,-1,1,-1,1)$ for all $X \in G.$ Then $K(\sigma'_0)$ has the identity component $K(\sigma'_0)_0 = \{\left(
\begin{array}{ccccccccc}
I_2 & 0 & 0 & 0 & 0 \\
0 & x_{33} & 0 & x_{35} & 0 \\
0 & 0 & x_{44} & 0 & x_{46} \\ 
0 & x_{53} & 0 & x_{55} & 0 \\ 
0 & 0 & x_{64} & 0 & x_{66} \\ 
\end{array}
\right) : \left(  
\begin{array}{ccc}
x_{33} & x_{35} \\
x_{53} & x_{55} \\
\end{array}
\right), \left(
\begin{array}{ccc}
x_{44} & x_{46} \\
x_{64} & x_{66} \\
\end{array}
\right) \in SO(2)\}.$
Therefore the components of $K(\sigma'_0)$ are 
$K(\sigma'_0)_0, Y_2 K(\sigma'_0)_0, Y_3 K(\sigma'_0)_0, Y_4 K(\sigma'_0)_0,$ 
where $Y_2 = \textrm{diag}(-I_2, I_4), Y_3 = \textrm{diag}(I_4,-I_2), Y_4 = \textrm{diag}(-I_2,I_2,-I_2).$ 
Also $\{E_{13} +E_{31}, E_{15} +E_{51}, E_{24} + E_{42}, E_{26} + E_{62}\}$ is a basis of $\frak{p}_0(\sigma'_0).$ 
Now $\textrm{Ad}(Y_2)(E_{1k} + E_{k1}) = - (E_{1k} + E_{k1})$ for $k = 3,5,\ \textrm{Ad}(Y_2)(E_{2r} + E_{r2}) = - (E_{2r} + E_{r2})$ for $r = 4,6;$ 
$\textrm{Ad}(Y_3)(E_{13} + E_{31}) = (E_{13} + E_{31}),\ \textrm{Ad}(Y_3)(E_{24} + E_{42}) = (E_{24} + E_{42}),\ 
\textrm{Ad}(Y_3)(E_{15} + E_{51}) = - (E_{15} + E_{51}),\ \textrm{Ad}(Y_3)(E_{26} + E_{62}) = - (E_{26} + E_{62});$
$\textrm{Ad}(Y_4)(E_{13} + E_{31}) = - (E_{13} + E_{31}),\ \textrm{Ad}(Y_4)(E_{24} + E_{42}) = - (E_{24} + E_{42}),\ 
\textrm{Ad}(Y_4)(E_{15} + E_{51}) = (E_{15} + E_{51}),\ \textrm{Ad}(Y_4)(E_{26} + E_{62}) = (E_{26} + E_{62}).$
Thus det$(\textrm{Ad}(Y_i)|_{\frak{p}_0(\sigma'_0)}) = 1,$ for $2 \le i \le 4$ and $G(\sigma'_0)$ acts orientation preservingly on $X(\sigma'_0).$

Assume that $m = 2$ that is, $l = 2.$ Consider the maps $\eta_1, \eta_2, \tau'_1 : G \longrightarrow G$ given by 
$\eta_1(X) = J'_2 X J'_2,\ \eta_2(X) = J'_{1,1} X J'_{1,1},\ \tau'_1(X) = \textrm{diag}(-I_2,1,-1) X \textrm{diag}(-I_2,1,-1)$ 
for all $X \in G,$ where $J'_2 = \left(
\begin{array}{ccc}
0 & I_2 \\
I_2 & 0 \\
\end{array}
\right)$ and $J'_{1,1} = \left(
\begin{array}{ccc}
0 & I_{1,1} \\
I_{1,1} & 0 \\
\end{array}
\right).$ Then $K(\eta_1) = \{\left(
\begin{array}{ccc}
X & 0 \\
0 & X \\
\end{array}
\right) : X \in SO(2) \}$ and $K(\eta_2) = \{\left(
\begin{array}{ccccccc}
x_{11} & x_{12} & 0 & 0 \\ 
x_{21} & x_{22} & 0 & 0 \\ 
0 & 0 & x_{11} & -x_{12} \\ 
0 & 0 & -x_{21} & x_{22} \\ 
\end{array}
\right) : \left(
\begin{array}{ccc}
x_{11} & x_{12} \\
x_{21} & x_{22} \\
\end{array}
\right) \in SO(2) \}$ are connected, but $K(\tau'_1)$ has two components $K(\tau'_1)_0,\ Y_2K(\tau'_1)_0,$ where 
$K(\tau'_1)_0 = \{\left(
\begin{array}{ccc}
X & 0 \\
0 & I_2 \\
\end{array}
\right) : X \in SO(2) \}, Y_2 = \left(
\begin{array}{ccc}
I_2 & 0 \\
0 & -I_2 \\
\end{array}
\right).$ Now $\frak{p}_0(\tau'_1)$ has basis $\{E_{14} + E_{41}, E_{24} + E_{42} \}$ and Ad$(Y_2)(E_{14} + E_{41}) = 
- (E_{14} + E_{41}),\ \textrm{Ad}(Y_2)(E_{24} + E_{42}) = - (E_{24} + E_{42})$ so that det$(\textrm{Ad}(Y_2)|_{\frak{p}_0(\tau'_1)}) = 1.$
Hence $G(\sigma)$ acts orientation preservingly on $X(\sigma)$ for $\sigma = \eta_1, \eta_2, \tau'_1.$

\noindent 
\section{Proof of Theorem \ref{main}}\label{proof} 

Let $F$ be a totally real algebraic number field of degree $>1$ and $\mathcal{O}$ be the ring of integers of $F.$ Then $\bar{G} = \textrm{Ad}(G)$ is a linear connected  
semisimple Lie group defined over $F$ via the basis $B_F$ (or $B'_F$) and $\bar{K} = \textrm{Ad}(K)$ is a maximal compact subgroup of $\bar{G}$ associated with 
the Cartan involution $\textrm{Ad}(g) \mapsto \theta \textrm{Ad}(g) \theta,$ denoted by the same notation $\theta.$ 
The Cartan involution $\theta$ of $\bar{G}$ is defined over $F.$ Consider 
the arithmetic uniform lattice $\Gamma$ (or $\Gamma'$) of Aut$(\frak{g}_0).$ Then $\Gamma$ (or $\Gamma'$) is invariant under $\theta.$  
By a result of Millson and Raghunathan \cite[Th. 2.1]{mira} (see also \cite[Th. 4.11]{rs}), 
if $\sigma$ is an involution of $\bar{G}$ defined over $F$ with $\sigma \theta = \theta \sigma$ and $\Gamma \subset \bar{G}_{\mathcal{O}}$ be 
a torsion-free, $\langle \sigma , \theta \rangle$-stable, arithmetic uniform lattice of $\bar{G}$ such that  
the Lie groups $\bar{G}(\sigma), \bar{G}(\sigma \theta)$ act orientation preservingly on $\bar{G}(\sigma)/\bar{K}(\sigma)$ and $\bar{G}(\sigma \theta)/\bar{K}(\sigma \theta)$ respectively,
then there exists a $\langle \sigma , \theta \rangle$-stable subgroup $\Gamma_1$ of $\Gamma$ of finite index such that the cohomology 
classes defined by $[C(\sigma , \Gamma_1)], [C(\sigma \theta, \Gamma_1)]$ via Poincar\'e duality are non-zero and are not represented 
by $\bar{G}$-invariant differential forms on $\bar{G}/\bar{K}$. Let $\sigma \neq \tau_p (1 \le p \le l)$ be an involution given in Table \ref{inv1} 
if $m = 2l-1,$ or $\sigma$ be an involution given in Table \ref{inv2} if $m = 2l-2.$ Then $\sigma \theta = \theta \sigma,$ 
$\sigma \in \Gamma$ or $\sigma \in \Gamma'$ (Proposition \ref{prep}), and $G(\sigma), G(\sigma \theta)$ act orientation preservingly on $X(\sigma), X(\sigma \theta)$ 
respectively (Proposition \ref{orp}). The involution $\textrm{Ad}(g) \mapsto \sigma \textrm{Ad}(g) \sigma$ of $\bar{G}$ corresponding to $\sigma,$ 
is denoted by the same notation $\sigma.$ Then $\sigma$ is defined over $F.$ Let $\Gamma_0$ (respectively, $\Gamma'_0$) be the set of all torsion-free elements of 
$\Gamma \cap \bar{G}$ (respectively, $\Gamma' \cap \bar{G}$). 
Then $\Gamma_0 \subset \bar{G}_\mathcal{O}$ (or $\Gamma'_0 \subset \bar{G}_\mathcal{O}$) is a torsion-free, $\langle \sigma , \theta \rangle$-stable, 
arithmetic uniform lattice of $\bar{G}.$ We have $\bar{G}(\sigma), \bar{G}(\sigma \theta)$ act orientation preservingly on $\bar{G}(\sigma)/\bar{K}(\sigma)$ and 
$\bar{G}(\sigma \theta)/\bar{K}(\sigma \theta)$ respectively. So there exists a $\langle \sigma , \theta \rangle$-stable subgroup $\bar{\Gamma}_\sigma$ of $\Gamma_0$ 
or $\Gamma'_0$ of finite index such that the cohomology classes defined by $[C(\sigma , \bar{\Gamma}_\sigma)], [C(\sigma \theta, \bar{\Gamma}_\sigma)]$ via 
Poincar\'e duality are non-zero and are not represented by $\bar{G}$-invariant differential forms on $\bar{G}/\bar{K}.$ Since $G$ is a covering group of $\bar{G},$ 
the cohomology classes defined by $[C(\sigma , \Gamma_\sigma)], [C(\sigma \theta, \Gamma_\sigma)]$ via Poincar\'e duality are also not represented by $G$-invariant 
differential forms on $X,$ where $\Gamma_\sigma = \textrm{Ad}^{-1}(\bar{\Gamma}_\sigma).$ This completes the proof.

\noindent
\section{Cohomologically induced representations of $SO_0(2,m)$ with the trivial infinitesimal character}\label{rep} 

A $\theta$-stable parabolic subalgebra of $\frak{g}_0$ is a parabolic subalgebra $\frak{q}$ of $\frak{g}$ such that 
(a) $\theta(\frak{q}) = \frak{q}$, and (b) $\bar{\frak{q}} \cap \frak{q}= \frak{l}$ is a Levi subalgebra of $\frak{q}$; 
where $\bar{\ }$ denotes the conjugation of $\frak{g}$ with respect to $\frak{g}_0$. Let $\frak{u}$ be the nilradical of $\frak{q}$ so that $\frak{q} = \frak{l} \oplus \frak{u}.$ 
Then $\frak{l}, \frak{u}$ are $\theta$-stable. By (b), $\frak{l}$ is the complexification of a real subalgebra $\frak{l}_0$ of $\frak{g}_0$. 
Now corresponding to a $\theta$-stable parabolic subalgebra $\frak{q}$, we have an irreducible unitary representation $A_\frak{q}$ of $G$ 
whose $(\frak{g}, K)$-module $A_{\frak{q}, K} = \mathcal{R}^S _\frak{q} (\mathbb{C}),$ where $S = \textrm{dim}
(\frak{u} \cap \frak{k}).$ The representation $A_\frak{q}$ has trivial infinitesimal character. 
The $(\frak{g}, K)$-modules $A_{\frak{q} , K}$ were first constructed, in general, by Parthasarathy \cite{parthasarathy1}. 
Vogan and Zuckerman \cite{voganz} gave a construction of the $(\frak{g}, K)$-modules $A_{\frak{q} , K}$ via cohomological induction and 
Vogan \cite{vogan} proved that these are unitarizable.
If $\frak{q}$ is a $\theta$-stable parabolic subalgebra, then so is $Ad(k)(\frak{q})$ and 
$A_\frak{q}$ is unitarily equivalent to $A_{Ad(k)(\frak{q})}$ for all $k \in K.$ So we may assume that $\frak{q}$ contains the Borel subalgebra 
$\frak{h} \oplus \sum_{\alpha \in \Delta_\frak{k}^+} \frak{g}^\alpha$ of $\frak{k}.$ Now define 
$\Delta(\frak{u} \cap \frak{p}_+) = \{\alpha \in \Delta_n^+ : \frak{g}^\alpha \subset \frak{u} \},\ \Delta(\frak{u} \cap \frak{p}_-) = \{\alpha \in \Delta_n^- : \frak{g}^\alpha \subset \frak{u} \}$ 
and $\Delta(\frak{u} \cap \frak{p}) = \Delta(\frak{u} \cap \frak{p}_+) \cup \Delta(\frak{u} \cap \frak{p}_-).$ If $\frak{q}, \frak{q}'$ be two $\theta$-stable parabolic subalgebras of 
$\frak{g}_0$ containing $\frak{h} \oplus \sum_{\alpha \in \Delta_\frak{k}^+} \frak{g}^\alpha$ of $\frak{k},$ then 
$A_\frak{q}$ is unitarily equivalent to $A_{\frak{q}'}$ {\it if and only if} 
\begin{equation} \label{cong}
\Delta(\frak{u} \cap \frak{p}) = \Delta(\frak{u'} \cap \frak{p}).
\end{equation}
See \cite{riba}. If $\frak{q}$ is a $\theta$-stable 
Borel subalgebra, then $A_\frak{q}$ is a discrete series representation and if $\frak{q} = \frak{g},$ then $A_\frak{q}$ is the trivial representation. 

The representations $A_\frak{q}$ are the only irreducible unitary representations with non-zero $(\frak{g}, K)$-cohomology and 
\[  H^r (\frak{g}, K; A_{\frak{q}, K}) \cong H^{r-R(\frak{q})} (Y_\frak{q} ; \mathbb{C}),\] 
where $Y_\frak{q}$ is the compact dual of the Riemannian globally symmetric space $L/{L\cap K},\ L = \{g \in G : \textrm{Ad}(g) (\frak{q}) = 
\frak{q} \}$ is the connected Lie subgroup of $G$ with Lie algebra $\frak{l}_0,\ L \cap K$ is a maximal compact subgroup of $L$ and 
$R(\frak{q}) = |\Delta(\frak{u} \cap \frak{p})|,$ the cardinality of $\Delta(\frak{u} \cap \frak{p}).$ See \cite{voganz}. 
Since $G/K$ is Hermitian symmetric, for each $\theta$-stable parabolic subalgebra $\frak{q}$ of $\frak{g}_0,$ 
the space $Y_\frak{q}$ is Hermitian symmetric. Let $P(Y_\frak{q}; x,t),\ P_\frak{q}(x,t)$ denote the Poincar\'{e}-Hodge polynomials of $H^*(Y_\frak{q} ; \mathbb{C})$ and 
$H^* (\frak{g}, K; A_{\frak{q}, K})$ respectively. We have 
\begin{equation}\label{cohomology}
P_\frak{q}(x,t) = x^{R_+(\frak{q})}t^{R_-(\frak{q})}P(Y_\frak{q}; x,t), 
\end{equation} 
where $R_+(\frak{q}) = |\Delta(\frak{u} \cap \frak{p})|,\ R_-(\frak{q}) = |\Delta(\frak{u} \cap \frak{p})|.$ Using the condition \ref{cong} and the formula \ref{cohomology}, 
the unitary equivalence classes of representations $A_\frak{q}$ of $G$ and the Poincar\'{e}-Hodge polynomial of $H^* (\frak{g}, K; A_{\frak{q}, K})$ are determined in \cite{pp}. 
This result is described in the Table \ref{b-table} and Table \ref{d-table}. 

If $\Gamma \subset G$ is a uniform lattice, we have 
\[L^2 (\Gamma \backslash G) \cong \widehat{\bigoplus}_{\pi \in \hat{G}} m(\pi , \Gamma ) H_\pi, \] 
due to Gelfand and Pyatetskii-Shapiro \cite{ggp}, \cite{gp}; where $L^2 (\Gamma \backslash G)$ is a Hilbert space of square integrable functions on 
$\Gamma \backslash G$ with respect to a $G$-invariant measure and it is a unitary representation of $G$ relative to the right traslation action of $G$ on  
$L^2 (\Gamma \backslash G),\ H_\pi$ is the representation space of $\pi \in \hat{G},\ m(\pi , \Gamma ) \in \mathbb{N} \cup \{0\}$, the multiplicity of $\pi$ in $L^2 (\Gamma \backslash G)$. 
If $(\tau, \mathbb{C})$ is the trivial representation of $G$, then $m(\tau , \Gamma) = 1.$
A unitary representation $\pi \in \hat{G}$ such that $m(\pi , \Gamma ) > 0$ for some uniform lattice $\Gamma,$ is called an automorphic representation of $G$ with respect to $\Gamma$.  
If in addition, the lattice $\Gamma$ is torsion-free, we have the Matsushima's isomorphism \cite{matsushima} 
\begin{equation} \label{matsushima} 
H^r (\Gamma \backslash X; \mathbb{C}) \cong \bigoplus_{[\frak{q}] \in \mathcal{Q}} m(\frak{q}, \Gamma) H^r (\frak{g}, K; A_{\frak{q},K});
\end{equation} 
where $\mathcal{Q}$ is the set of equivalence classes of $\theta$-stable parabolic subalgebras of $\frak{g}_0$ containing $\frak{h} \oplus \sum_{\alpha \in \Delta_\frak{k}^+} \frak{g}^\alpha$ 
under the equivalence relation $\frak{q} \sim \frak{q}'$ {\it iff} $\Delta(\frak{u} \cap \frak{p}) = \Delta(\frak{u'} \cap \frak{p}),$ and $m(\frak{q},\Gamma)$ is the multiplicity of $A_\frak{q}$ in 
$L^2 (\Gamma \backslash G).$ Since $\Gamma \backslash X$ is a compact K\"{a}hler manifold, we have the Hodge decomposition 
\[H^r (\Gamma \backslash X; \mathbb{C}) \cong \bigoplus_{p+q = r} H^{p,q}(\Gamma \backslash X; \mathbb{C}). \] 

Now from Theorem \ref{main}, we have for each $\sigma \neq \tau_p (1 \le p \le l)$ in Table \ref{inv1} for $m = 2l-1,$ or for each $\sigma$ in Table \ref{inv2} for $m = 2l-2,$ 
there exists a torsion-free, $\langle \sigma , \theta \rangle$-stable, arithmetic, uniform lattice $\Gamma_\sigma$ of $G$ and non-zero cohomology 
classes in $H^{d(\sigma)} (\Gamma_\sigma \backslash X; \mathbb{C}) (d(\sigma) = \textrm{dim}(X(\sigma)))$ and 
$H^{d(\sigma \theta)}(\Gamma_\sigma \backslash X; \mathbb{C}) (d(\sigma \theta) = \textrm{dim}(X(\sigma \theta)))$ 
defined by $[C(\sigma , \Gamma_\sigma)]$ and $[C(\sigma \theta, \Gamma_\sigma)]$ respectively and these do not lie in $H^* (\frak{g}, K; A_{\frak{g},K})$ 
via the isomorphism \ref{matsushima}. We say that {\it an involution $\sigma$ mentioned in Theorem \ref{main} has no $A_\frak{q}$-component} if 
the Poincar\'{e} duals of the fundamental classes $[C(\sigma , \Gamma_\sigma)]$ and $[C(\sigma \theta, \Gamma_\sigma)]$ have zero component on 
$m_\frak{q} H^* (\frak{g}, K; A_{\frak{q},K})$ via the isomorphism \ref{matsushima}. The dimensions $d(\sigma)$ and $d(\sigma \theta)$ for each 
involution $\sigma$ mentioned in Theorem \ref{main} are listed in the Table \ref{dimension}.
If in addition, $\sigma|_\frak{z} = id,$ then $C(\sigma , \Gamma_\sigma),\ C(\sigma \theta, \Gamma_\sigma)$ are compact locally 
Hermitian symmetric spaces and the Poincar\'{e} duals of the fundamental classes $[C(\sigma , \Gamma_\sigma)]$ and $[C(\sigma \theta, \Gamma_\sigma)]$ 
lie in $H^{\frac{d(\sigma)}{2}, \frac{d(\sigma)}{2}} (\Gamma_\sigma \backslash X; \mathbb{C})$ and 
$H^{\frac{d(\sigma \theta)}{2}, \frac{d(\sigma \theta)}{2}}(\Gamma_\sigma \backslash X; \mathbb{C})$ respectively. Combining all these, we have determined the involutions 
$\sigma$ which have no $A_\frak{q}$-component for each $[\frak{q}] \in \mathcal{Q},$ and these are listed in the Table \ref{b-table} and Table \ref{d-table}. 

\begin{table}[!h]
\caption{Dimensions of $X(\sigma)$ and $X(\sigma \theta):$}\label{dimension}
\begin{tabular}{||c|c|c||}
\hline
$\sigma$ &  $d(\sigma)$ & $d(\sigma \theta)$  \\
\hline
\hline
$\sigma_p (2 \le p \le l, m = 2l-1, l \ge 2)$ & $2(2p-2)$ & $2(2l-2p+1)$ \\
$\sigma_p (2 \le p \le l-2, m = 2l-2, l \ge 4)$ & $2(2p-2)$ & $2(2l-2p)$ \\
$\sigma_{l-1}, \sigma_l (m = 2l-2, l \ge 3)$ & $2(l-1)$ & $2(l-1)$ \\
$\tau'_p (1 \le p \le l-2, m = 2l-2, l \ge 3)$ & $2(2p-1)$ & $2(2l-2p-1)$ \\ 
$\tau_p (1 \le p \le l-2, m = 2l-2, l \ge 3)$ & $2(l-1)$ & $2(l-1)$ \\
$\mu_p (1 \le p \le l-2, m = 2l-2, l \ge 3)$ & $2(l-1)$ & $2(l-1)$ \\
$\sigma_0, \sigma'_0 (m = 4, l = 3)$ & $4$ & $4$ \\
$\sigma_1, \tau'_1, \tau_1, \mu_1 (m = 2, l = 2)$ & $2$ & $2$ \\
$\eta_1, \eta_2 (m = 2, l = 2)$ & $3$ & $1$ \\
\hline
\end{tabular} 
\end{table}

\begin{table}[!h] 
\caption{Involutions with no $A_\frak{q}$-component for $G = SO_0(2,2l-1)(l \ge 2):$}\label{b-table}
\begin{tabular}{||c|c|c|c||}
\hline
$\Delta(\frak{u} \cap \frak{p}_-)$ & $\Delta(\frak{u} \cap \frak{p}_+)$ & $P_\frak{q}(x,t)$ & 
\begin{tabular}{c} $\sigma$ with no \\ $A_\frak{q}$-component \end{tabular} \\ 
\hline
\hline
empty & 
\begin{tabular}{c} $\{\beta \in \Delta_n^+ :\beta \ge $\\$ \phi_1\}$\\ \\ 
$\{\beta \in \Delta_n^+ : \beta \ge $\\$\phi_1+\cdots+\phi_i\}$\\$(2 \le i \le l)$\\ \\empty \end{tabular} & 
\begin{tabular}{c} $x^{2l-1}$\\ \\$x^{2l-i}+x^{2l-i+1}t$\\$+\cdots+x^{2l-1}t^{i-1}$\\$(2 \le i \le l)$\\ \\$1+xt+\cdots$\\$+x^{2l-1}t^{2l-1}$   \end{tabular} & 
\begin{tabular}{c} $\sigma_p$\\  \\  \\$\sigma_p$\\    \\    \\ $-$  \\ \end{tabular}  \\  
\hline 
\begin{tabular}{c} $-\{\beta \in \Delta_n^+ : \beta \le $\\$\phi_1+\cdots +\phi_i\}$\\$(1 \le i \le l-1)$ \end{tabular} & 
\begin{tabular}{c} $\{\beta \in \Delta_n^+ : \beta \ge $\\$\phi_1+\cdots +\phi_{i+1}\}$\\ \\$\{\beta \in \Delta_n^+ : \beta \ge $\\$\phi_1+\cdots +\phi_j\}$\\$(i+2 \le j \le l)$\\ \\
$\{\beta \in \Delta_n^+ : \beta \ge $\\$\phi_1+\cdots +\phi_i+$\\$2\phi_{i+1}+\cdots+2\phi_l\}$ \end{tabular} & 
\begin{tabular}{c} $x^{2l-1-i}t^i$\\ \\$x^{2l-j}t^i+x^{2l-j+1}t^{i+1}$\\$+\cdots+x^{2l-i-1}t^{j-1}$\\$(i+2 \le j \le l)$\\ \\$x^it^i+x^{i+1}t^{i+1}+$\\$\cdots+x^{2l-i-1}t^{2l-i-1}$   \end{tabular} & 
\begin{tabular}{c} $\sigma_p$\\  \\  \\$\sigma_p$\\    \\    \\ $\sigma_p (p < \frac{i+2}{2}$\\$\textrm{ or } p > \frac{2l-i+1}{2})$\\ \end{tabular}  \\  
\hline 
\begin{tabular}{c} $-\{\beta \in \Delta_n^+ : \beta \le $\\$\phi_1+\cdots +\phi_l\}$ \end{tabular} & 
\begin{tabular}{c} $\{\beta \in \Delta_n^+ :\beta \ge $\\$\phi_1+\cdots +\phi_{l-1}+2\phi_l\}$\\ \\
$\{\beta \in \Delta_n^+ : \beta \ge $\\$\phi_1+\cdots +\phi_{j-1}$\\$+2\phi_j+\cdots+2\phi_l\}$\\$(2 \le j \le l-1)$\\ \\ empty \end{tabular} & 
\begin{tabular}{c} $x^{l-1}t^l$\\ \\$x^{j-1}t^l+x^{j}t^{l+1}$\\$+\cdots+x^{l-1}t^{2l-j}$\\$(2 \le j \le l-1)$\\ \\$t^l+xt^{l+1}+$\\$\cdots+x^{l-1}t^{2l-1}$   \end{tabular} & 
\begin{tabular}{c} $\sigma_p$\\  \\  \\$\sigma_p$\\    \\    \\ $\sigma_p$\\ \end{tabular}  \\  
 \hline
\begin{tabular}{c} $-\{\beta \in \Delta_n^+ : \beta \le $\\$\phi_1+\cdots +\phi_{i-1}$\\$+2\phi_i+\cdots+2\phi_l\},$\\$(3\le i \le l)$ \end{tabular} & 
\begin{tabular}{c} $\{\beta \in \Delta_n^+ : \beta \ge $\\$\phi_1+\cdots +\phi_{i-2}$\\$+2\phi_{i-1}+\cdots+2\phi_l\}$\\ \\
$\{\beta \in \Delta_n^+ : \beta \ge $\\$\phi_1+\cdots +\phi_{j-1}$\\$+2\phi_j+\cdots+2\phi_l\}$\\$(2 \le j \le i-2)$\\ \\ empty \end{tabular} & 
\begin{tabular}{c} $x^{i-2}t^{2l-i+1}$\\ \\$x^{j-1}t^{2l-i+1}+x^jt^{2l-i+2}$\\$+\cdots+x^{i-2}t^{2l-j}$\\$(2 \le j \le i-2)$\\ \\$t^{2l-i+1}+xt^{2l-i+2}$\\$+\cdots+x^{i-2}t^{2l-1}$   \end{tabular} &   
\begin{tabular}{c} $\sigma_p$\\  \\  \\$\sigma_p$\\    \\    \\ $\sigma_p$\\ \end{tabular}  \\  
\hline
\begin{tabular}{c} $-\{\beta \in \Delta_n^+ : \beta \le \phi_1$\\$+2\phi_2+\cdots+2\phi_l\}$ \end{tabular} & 
empty & $t^{2l-1}$ & $\sigma_p$ \\ 
\hline
\end{tabular} 
\end{table}

\begin{table}
\caption{Involutions with no $A_\frak{q}$-component for $G = SO_0(2,2l-2)(l \ge 2):$}\label{d-table}
\begin{tabular}{||c|c|c|c||}
\hline
$\Delta(\frak{u} \cap \frak{p}_-)$ & $\Delta(\frak{u} \cap \frak{p}_+)$ & $P_\frak{q}(x, t)$ & 
\begin{tabular}{c} $\sigma$ with no \\ $A_\frak{q}$-component \end{tabular} \\ 
\hline
\hline
empty ($l \ge 3$) & 
\begin{tabular}{c} $\{\beta \in \Delta_n^+ : \beta \ge $\\$\phi_1\}$\\ \\ 
$\{\beta \in \Delta_n^+ : \beta \ge $\\$\phi_1+\cdots +\phi_i\}$\\$(2 \le i \le l-2)$\\ \\$\{\beta \in \Delta_n^+ : \beta \ge $\\$ \xi_i\} (i =1,2)$\\ \\
$\{\beta \in \Delta_n^+ : \beta \ge $\\$ \xi_1 \textrm{ or } \xi_2\}$\\ \\empty \end{tabular} & 
\begin{tabular}{c} $x^{2l-2}$\\ \\$x^{2l-i-1}+x^{2l-i}t$\\$+\cdots+x^{2l-2}t^{i-1}$\\ \\$x^{l-1}+x^lt+$\\$\cdots+x^{2l-2}t^{l-1}$\\ \\ 
$x^l+x^{l+1}t+$\\$\cdots+x^{2l-2}t^{l-2}$\\ \\$1+xt+\cdots+$\\$2x^{l-1}t^{l-1}+\cdots$\\$+x^{2l-2}t^{2l-2}$  \end{tabular} & 
\begin{tabular}{c} $\sigma_0,\sigma_p,\tau'_p$\\  \\$\sigma_p,\tau'_p;$\\$\tau_p,\mu_p$(if $i$ is even)\\    \\
$\sigma_0,\sigma_p,\tau'_p;$\\$\tau_p,\mu_p$(if $l$ is even)\\  \\
$\sigma_0,\sigma_p, \tau'_p;$\\$\sigma'_0, \tau_p,\mu_p$(if $l$ is odd)\\   \\ $-$  \\ \end{tabular}  \\   
\hline 
\begin{tabular}{c} $-\{\beta \in \Delta_n^+ : \beta \le $\\$\phi_1+\cdots +\phi_i\}$\\$(1 \le i \le l-3)$\\$(l \ge 4)$ \end{tabular} & 
\begin{tabular}{c} $\{\beta \in \Delta_n^+ : \beta \ge $\\$\phi_1+\cdots +\phi_{i+1}\}$\\ \\$\{\beta \in \Delta_n^+ : \beta \ge $\\$\phi_1+\cdots +\phi_j\}$\\$(i+2 \le j \le l-2)$\\ \\
$\{\beta \in \Delta_n^+ : \beta \ge $\\$\xi_i\}(i=1,2)$\\ \\$\{\beta \in \Delta_n^+ : \beta \ge $\\$\xi_1 \textrm{ or } \xi_2\}$\\ \\ 
$\{\beta \in \Delta_n^+ : \beta \ge $\\$\phi_1+\cdots +\phi_i$\\$+2\phi_{i+1}+\cdots +$\\$2\phi_{l-2}+\phi_{l-1}+\phi_l\}$ \end{tabular} & 
\begin{tabular}{c} $x^{2l-2-i}t^i$\\ \\$x^{2l-j-1}t^i+x^{2l-j}t^{i+1}$\\$+\cdots+x^{2l-2-i}t^{j-1}$\\ \\$x^{l-1}t^i+x^lt^{i+1}+$\\$\cdots+x^{2l-2-i}t^{l-1}$\\ \\ 
$x^lt^i+x^{l+1}t^{i+1}$\\$+\cdots+x^{2l-2-i}t^{l-2}$\\ \\ $x^it^i+x^{i+1}t^{i+1}+$\\$\cdots +2x^{l-1}t^{l-1}+\cdots$\\$+x^{2l-i-2}t^{2l-i-2}$  \end{tabular} & 
\begin{tabular}{c} $\sigma_p,\tau'_p$\\  \\$\sigma_p,\tau'_p;$\\$\tau_p,\mu_p$(if $j-i$ is even)\\    \\
$\sigma_p,\tau'_p;$\\$\tau_p,\mu_p$(if $l-i$ is even)\\  \\$\sigma_p, \tau'_p;$\\$\tau_p,\mu_p$(if $l-i$ is odd)\\   \\   
$\sigma_p(p < \frac{i+2}{2} \textrm{ or } p > \frac{2l-i}{2}),$\\$\tau'_p(p < \frac{i+1}{2} \textrm{ or } p > \frac{2l-i-1}{2})$     \end{tabular}  \\   
\hline 
\begin{tabular}{c} $-\{\beta \in \Delta_n^+ : \beta \le $\\$\phi_1+\cdots +\phi_{l-2}\}$\\$(l \ge 3)$ \end{tabular} & 
\begin{tabular}{c} $\{\beta \in \Delta_n^+ : \beta \ge $\\$\xi_1 \textrm{ or }\xi_2\}$\\ \\$\{\beta \in \Delta_n^+ : \beta \ge $\\$\xi_i\}(i =1,2)$\\ \\
$\{\beta \in \Delta_n^+ : \beta \ge $\\$\phi_1+\cdots+\phi_l\}$ \end{tabular} & 
\begin{tabular}{c} $x^lt^{l-2}$\\ \\$x^{l-1}t^{l-2}+x^lt^{l-1}$\\ \\$x^{l-2}t^{l-2}+$\\$2x^{l-1}t^{l-1}+x^lt^l$  \end{tabular} &  
\begin{tabular}{c} $\sigma_0,\sigma_p,\tau'_p$\\  \\$\sigma_0,\sigma_p,\tau'_p,\sigma'_0,\tau_p,\mu_p$\\    \\
$\sigma_p (p \neq \frac{l}{2}, \frac{l+1}{2}, \frac{l+2}{2}, l-1, l),$\\$\tau'_p (p \neq \frac{l-1}{2}, \frac{l}{2}, \frac{l+1}{2})$    \end{tabular}  \\   
\hline
\begin{tabular}{c} $-\{\beta \in \Delta_n^+ : \beta \le $\\$\xi_i\}(i=1,2)(l \ge 3)$ \end{tabular} & 
\begin{tabular}{c} $\{\beta \in \Delta_n^+ : \beta \ge $\\$\xi_j\}(j \in \{1,2\}\setminus \{i\})$\\ \\$\{\beta \in \Delta_n^+ : \beta \ge $\\$\phi_1+\cdots+\phi_l\}$ \end{tabular} & 
\begin{tabular}{c} $x^{l-1}t^{l-1}$\\ \\$x^{l-2}t^{l-1}+x^{l-1}t^l$ \end{tabular} & 
\begin{tabular}{c} $\sigma_p (p \neq \frac{l+1}{2}, l-1, l),$\\$\tau'_p (p \neq \frac{l}{2})$\\   \\$\sigma_0,\sigma_p,\tau'_p,\sigma'_0,\tau_p,\mu_p$    \end{tabular}  \\  
\hline 
\end{tabular}
\end{table}

\begin{table} 
\begin{tabular}{||c|c|c|c||}
\hline
,, & \begin{tabular}{c} $\{\beta \in \Delta_n^+ : \beta \ge $\\$ \phi_1+\cdots +\phi_{j-1}+$\\$2\phi_j+\cdots +2\phi_{l-2}$\\$+\phi_{l-1}+\phi_l\}(2\le $\\$ j\le l-2, l\ge 4)$\\ \\ empty \end{tabular} & 
\begin{tabular}{c} $x^{j-1}t^{l-1}+x^jt^l$\\$+\cdots+x^{l-1}t^{2l-j-1}$\\ \\$t^{l-1}+xt^l+$\\$\cdots+x^{l-1}t^{2l-2}$ \end{tabular} & 
\begin{tabular}{c} $\sigma_p,\tau'_p;$\\$\tau_p,\mu_p$(if $l-j$ is odd)\\    \\$\sigma_0,\sigma_p,\tau'_p;$\\$\tau_p,\mu_p$(if $l$ is even)   \end{tabular}  \\      
\hline
\begin{tabular}{c} $-\{\beta \in \Delta_n^+ : \beta \le $\\$\xi_1,\textrm{or }\xi_2\}(l \ge 3)$ \end{tabular} & 
\begin{tabular}{c} $\{\beta \in \Delta_n^+ : \beta \ge $\\$\phi_1+\cdots + \phi_l\}$\\ \\
$\{\beta \in \Delta_n^+ : \beta \ge $\\$\phi_1+\cdots +\phi_{j-1}+$\\$2\phi_j+\cdots +2\phi_{l-2}$\\$+\phi_{l-1}+\phi_l\}(2\le $\\$ j\le l-2, l\ge 4)$\\ \\empty \end{tabular} & 
\begin{tabular}{c} $x^{l-2}t^l$\\ \\$x^{j-1}t^l+x^jt^{l+1}$\\$+\cdots+x^{l-2}t^{2l-j-1}$\\ \\$t^l+xt^{l+1}+$\\$\cdots+x^{l-2}t^{2l-2}$   \end{tabular} & 
\begin{tabular}{c} $\sigma_0,\sigma_p,\tau'_p$\\    \\$\sigma_p,\tau'_p;$\\$\tau_p,\mu_p$(if $l-j$ is even)\\   \\$\sigma_0,\sigma_p,\tau'_p;$\\$\sigma'_0,\tau_p,\mu_p$(if $l$ is odd)   \end{tabular}  \\      
\hline 
\begin{tabular}{c} $-\{\beta \in \Delta_n^+ : \beta \le $\\$\phi_1+\cdots+\phi_l\}$\\$(l \ge 4)$ \end{tabular} & 
\begin{tabular}{c} $\{\beta \in \Delta_n^+ : \beta \ge $\\$\phi_1+\cdots+\phi_{l-3}+$\\$2\phi_{l-2}+\phi_{l-1}+\phi_l\}$\\ \\
$\{\beta \in \Delta_n^+ : \beta \ge $\\$\phi_1+\cdots +\phi_{j-1}+$\\$2\phi_j+\cdots+2\phi_{l-2}$\\$+\phi_{l-1}+\phi_l\}(2 $\\$\le j\le l-3)$\\ \\empty \end{tabular} & 
\begin{tabular}{c} $x^{l-3}t^{l+1}$\\ \\$x^{j-1}t^{l+1}+x^jt^{l+2}$\\$+\cdots+x^{l-3}t^{2l-j-1}$\\ \\$t^{l+1}+xt^{l+2}$\\$+\cdots+x^{l-3}t^{2l-2}$   \end{tabular} & 
\begin{tabular}{c} $\sigma_p,\tau'_p$\\    \\$\sigma_p,\tau'_p;$\\$\tau_p,\mu_p$(if $l-j$ is odd)\\   \\$\sigma_p,\tau'_p;$\\$\tau_p,\mu_p$(if $l$ is even)   \end{tabular}  \\      
\hline 
\begin{tabular}{c} $-\{\beta \in \Delta_n^+ : \beta \le $\\$\phi_1+\cdots +\phi_{i-1}+$\\$2\phi_i+\cdots+2\phi_{l-2}$\\$+\phi_{l-1}+\phi_l\}$\\$(3\le i \le l-2)$\\$(l \ge 4)$ \end{tabular} & 
\begin{tabular}{c} $\{\beta \in \Delta_n^+ : \beta \ge $\\$ \phi_1+\cdots +\phi_{i-2}+$\\$2\phi_{i-1}+\cdots+2\phi_{l-2}$\\$+\phi_{l-1}+\phi_l \}$\\ \\
$\{\beta \in \Delta_n^+ : \beta \ge $\\$\phi_1+\cdots +\phi_{j-1}$\\$+2\phi_j+\cdots+2\phi_l\}$\\$(2 \le j \le i-2)$\\ \\ empty \end{tabular} & 
\begin{tabular}{c} $x^{i-2}t^{2l-i}$\\ \\$x^{j-1}t^{2l-i}+x^jt^{2l-i+1}$\\$+\cdots +x^{i-2}t^{2l-j-1}$\\ \\$t^{2l-i}+xt^{2l-i+1}$\\$+\cdots+x^{i-2}t^{2l-2}$   \end{tabular} & 
\begin{tabular}{c} $\sigma_p,\tau'_p$\\    \\$\sigma_p,\tau'_p;$\\$\tau_p,\mu_p$(if $i-j$ is even)\\   \\$\sigma_p,\tau'_p;$\\$\tau_p,\mu_p$(if $i$ is odd)   \end{tabular}  \\      
\hline
\begin{tabular}{c} $-\{\beta \in \Delta_n^+ : \beta \le $\\$\phi_1+2\phi_2+\cdots+$\\$2\phi_{l-2}+\phi_{l-1}+\phi_l\}$\\$(l \ge 3)$ \end{tabular} & 
empty & $t^{2l-2}$ & $\sigma_0,\sigma_p, \tau'_p$  \\
\hline 
empty ($l = 2$)  &  
\begin{tabular}{c} empty \\$\{\phi_i\}(i = 1,2)$ \\$\{\phi_1,\phi_2\}$ \end{tabular}  & 
\begin{tabular}{c} $1+2xt+x^2t^2$ \\$x+x^2t$ \\$x^2$ \end{tabular}  &  
\begin{tabular}{c} $-$ \\$\sigma_1, \tau'_1, \tau_1, \mu_1$ \\$\sigma_1, \eta_1, \eta_2$ \end{tabular}  \\  
\hline
$\{-\phi_i\}(i=1,2; l = 2)$  & 
\begin{tabular}{c} empty \\$\{\phi_j\}(j = 1,2, j \neq i)$  \end{tabular}  & 
\begin{tabular}{c} $t+xt^2$ \\$xt$ \end{tabular}  &  
\begin{tabular}{c} $\sigma_1, \tau'_1, \tau_1, \mu_1$ \\$\eta_1, \eta_2$ \end{tabular}  \\  
\hline
$\{-\phi_1,-\phi_2\}(l = 2)$  & empty   &   $t^2$    &    $\sigma_1, \eta_1, \eta_2$    \\   
\hline 
\end{tabular} 
\end{table}

{\bf Note:} In the Table \ref{d-table}, $\xi_1 = \phi_1 + \phi_2 + \cdots + \phi_{l-2} + \phi_{l-1}, \xi_2 = \phi_1 + \phi_2 + \cdots + \phi_{l-2} + \phi_l.$ 

\begin{remark} 
If $m = 2l-1 (l \ge 2),$ from the Table \ref{b-table}, we can see that $\sigma_l$ has no $A_\frak{q}$-component for all $[\frak{q}] \in \mathcal{Q}$ except the trivial representation and 
the one which corresponds to $\Delta(\frak{u} \cap \frak{p}) = \{-\phi_1, \delta\}.$ Since $\sigma_l$ corresponds to a non-zero cohomology class in 
$H^*(\frak{g}, K ; \Gamma_{\sigma_l} \backslash X)$ which is not represented by $G$-invariant differential forms on $X,$ 
we must have $m(\frak{q}_a, \Gamma_{\sigma_l}) \neq 0,$ where $\frak{q}_a$ corresponds to $\Delta(\frak{u} \cap \frak{p}) = \{-\phi_1, \delta\}.$ So $A_{\frak{q}_a}$ is an 
automorphic representation. If $m=2l-2(l \ge 3),$ from the Table \ref{d-table}, we can see that $\tau'_1$  has no $A_\frak{q}$-component for all $[\frak{q}] \in \mathcal{Q}$ 
except the trivial representation and the one which corresponds to $\Delta(\frak{u} \cap \frak{p}) = \{-\phi_1, \delta\}.$ 
Thus $A_{\frak{q}}$ is an automorphic representation, where $\frak{q}$ corresponds to $\Delta(\frak{u} \cap \frak{p}) = \{-\phi_1, \delta\}.$ This result is obtained earlier in \cite{mondal-sankaran2}. 
\end{remark}

\begin{remark} 
Assume that $m \neq 2.$ By a result of Kobayashi and Oda \cite{koboda}(see also \cite{kobpreprint}, \cite{mondal1}), If $\sigma$ is an involution 
commuting with $\theta$ such that $A_\frak{q}$ is discretely decomposable as an $(\frak{g}(\sigma), K(\sigma))$-module, then $\sigma$ has no 
$A_\frak{q}$-component. For $m = 2l-1(l \ge 2),$ if $\sigma = \sigma_p,$ then $A_\frak{q}$ is discretely decomposable as an $(\frak{g}(\sigma), K(\sigma))$-module 
for $\theta$-stable parabolic subalgebras $\frak{q}$ corresponding to $\Delta(\frak{u} \cap \frak{p}) = \Delta_n^+$ and $\Delta_n^-.$ For $m = 2l-2(l \ge 3),$ if 
$\sigma = \sigma_0, \sigma_p, \tau'_p$ then $A_\frak{q}$ is discretely decomposable as an $(\frak{g}(\sigma), K(\sigma))$-module 
for $\theta$-stable parabolic subalgebras $\frak{q}$ corresponding to $\Delta(\frak{u} \cap \frak{p}) = \Delta_n^+$ and $\Delta_n^-.$ See \cite[Table C.3]{kobosh}. Thus if 
$m = 2l-1(l \ge 2),$ then $\sigma_p$ has no $A_\frak{q}$-component for $\frak{q}$ corresponding to $\Delta(\frak{u} \cap \frak{p}) = \Delta_n^+$ and $\Delta_n^-,$ 
and if $m = 2l-2(l \ge 3),$ then $\sigma_0, \sigma_p,\tau'_p$ has no $A_\frak{q}$-component for $\frak{q}$ corresponding to $\Delta(\frak{u} \cap \frak{p}) = \Delta_n^+$ and $\Delta_n^-.$ 
These results are also obtained in Table \ref{b-table}, Table \ref{d-table}. 
\end{remark}  

\section*{acknowledgement}

Both authors acknowledge the financial support from the Department of Science and Technology (DST), Govt. of India under the Scheme 
"Fund for Improvement of S\&T Infrastructure (FIST)" [File No. SR/FST/MS-I/2019/41]. 
Ankita Pal acknowledges the financial support from Council of Scientific and Industrial Research (CSIR) [File No. 08/155(0091)/2021-EMR-I].

\end{document}